\newtheorem{thm}{Theorem}[section]
\newtheorem{prop}[thm]{Proposition}
\newtheorem{cor}[thm]{Corollary}
\newtheorem{lem}[thm]{Lemma}
\theoremstyle{definition}
\newtheorem{defn}[thm]{Definition}
\theoremstyle{remark}
\newtheorem{rem}[thm]{Remark}
\newtheorem{ex}[thm]{Example}
\newtheorem{claim}[thm]{Claim}
\newcommand{\aut}[1]{\text{\rm aut}_1({#1})}
\newcommand{\K}{{\mathbb Q}}
\newcommand{\F}{{\mathcal F}}
\newcommand{\mapright}[1]{%
 \smash{\mathop{%
  \hbox to 1cm{\rightarrowfill}}\limits_{#1}}}
\newcommand{\maprightd}[2]{%
 \smash{\mathop{%
  \hbox to 1.2cm{\rightarrowfill}}\limits^{#1}\limits_{#2}}}
\newcommand{\mapleft}[1]{%
 \smash{\mathop{%
  \hbox to 1cm{\leftarrowfill}}\limits_{#1}}}
\newcommand{\mapleftu}[1]{%
 \smash{\mathop{%
  \hbox to 0.8cm{\leftarrowfill}}\limits^{#1}}}
\newcommand{\maprightu}[1]{%
 \smash{\mathop{%
  \hbox to 1cm{\rightarrowfill}}\limits^{#1}}}
\newcommand{\maprightud}[2]{%
 \smash{\mathop{%
  \hbox to 1cm{\rightarrowfill}}\limits^{#1}_{#2}}}
\newcommand{\mapleftud}[2]{%
 \smash{\mathop{%
  \hbox to 1cm{\leftarrowfill}}\limits^{#1}_{#2}}}
\newcounter{eqn}[section]
\def\theeqn{\textnormal{(\thesection.\arabic{eqn})}}
\def\eqnlabel#1{%
  \refstepcounter{eqn}%
  \label{#1}%
  \leqno{\theeqn}}
\begin{document}

\title[]
{On extensions of a symplectic class}

\footnote[0]{{\it 2000 Mathematics Subject Classification}: 55P62, 
57R19, 57T35. 
\\ 
{\it Key words and phrases.} 
Symplectic manifold, Sullivan model. 

This research was partially supported by a Grant-in-Aid for Scientific
Research (C)20540070
from Japan Society for the Promotion of Science.

Department of Mathematical Sciences, 
Faculty of Science,  
Shinshu University,   
Matsumoto, Nagano 390-8621, Japan   
e-mail:{\tt kuri@math.shinshu-u.ac.jp}

}

\author{Katsuhiko KURIBAYASHI}
\date{}
   
\maketitle

\begin{abstract}
Let ${\mathcal F}$ be a fibration on a simply-connected base with
 symplectic fibre $(M, \omega)$. Assume that the fibre is nilpotent and 
${\mathbb T}^{2k}$-{\it separable}
 for some integer $k$ or a nilmanifold. Then our main theorem, Theorem \ref{thm:kappa}, 
gives a necessary and sufficient condition 
for the cohomology class $[\omega]$ to extend to a cohomology class of the
 total space of ${\mathcal F}$. This allows us to describe 
Thurston's criterion 
for a symplectic fibration to admit a compatible symplectic form 
in terms of the classifying map for the underlying fibration.    
The obstruction due to Lalond and McDuff 
for a symplectic bundle to be Hamiltonian is also rephrased in the same vein.  
Furthermore, with the aid of the main theorem, 
we discuss a global nature of the set of the homotopy equivalence classes of fibrations with symplectic fibre 
in which the class $[\omega]$ is extendable.
\end{abstract}

\section{Introduction}

Let $P \to B$ be a fibration over a simply-connected space with fibre equal to 
a closed symplectic manifold $(M, \omega)$. 
We consider the problem whether the cohomology class $[\omega]$ in  
$H^2(M; {\mathbb R})$ extends to a cohomology class of $P$ 
when the fibre $(M, \omega)$ is in a large family 
of symplectic manifolds, which contains the product spaces   
of simply-connected symplectic manifolds and the even dimensional torus.

Let $\text{Symp}(M, \omega)$ be 
the group of symplectomorphisms of $(M, \omega)$, namely diffeomorphisms 
of $M$ that fix the symplectic form $\omega \in \Omega_{\text{de Rham}}^2(M)$. 
A locally trivial fibration with symplectic fibre $(M, \omega)$ in the category of smooth manifolds is
called a {\it symplectic fibration} if its structural group is contained in 
$\text{Symp}(M, \omega)$. 
The following result due to Thurston tells us importance of 
such an extension of a symplectic class.  

\begin{thm}     
\cite[Theorem 6.3]{M-S} \cite{T}
\label{thm:M-S}
Let $\pi : P \to B$ be a compact symplectic fibration with symplectic
 fibre $(M, \omega)$ and connected symplectic base $(B, \beta)$. 
 Denote by $\omega_b$ the canonical symplectic form on the fibre $P_b$
 and suppose that there is a class $a \in H^2(P; {\mathbb R})$ such
 that $i_b^*a = [\omega_b]$ for some $b \in B$. Then for every sufficient
 large real number $K > 0$, there exists a symplectic form 
$\omega_K$ of $P$ such that $\omega_b= i_b^* \omega_K$ and 
$a + K[\pi^*\beta]= [\omega_K]$.   
\end{thm}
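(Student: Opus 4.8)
The plan is to reduce the theorem to the construction of a single closed $2$-form on $P$ that restricts to the fibrewise symplectic forms, and then to perturb it by a large multiple of the pulled-back base form. Concretely, I would first isolate the auxiliary statement (Thurston's lemma): under the standing hypothesis that $a \in H^2(P;{\mathbb R})$ satisfies $i_b^*a = [\omega_b]$, there is a closed $2$-form $\tau$ on $P$ with $[\tau] = a$ and $i_b^*\tau = \omega_b$ for \emph{every} $b \in B$. Granting this, set
\[
\omega_K = \tau + K\,\pi^*\beta .
\]
This form is closed for all $K$, and since $\pi \circ i_b$ is constant we have $i_b^*\pi^*\beta = 0$, so $i_b^*\omega_K = \omega_b$ and $[\omega_K] = a + K[\pi^*\beta]$. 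Thus both the fibrewise normalisation and the prescribed cohomology class are automatic, and only non-degeneracy for large $K$ remains to be checked.

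The construction of $\tau$ is where the main difficulty lies, and it is the step that genuinely uses that ${\mathcal F}$ is a symplectic fibration with compact fibre. I would choose any closed $2$-form $\sigma$ representing $a$. For each $b$ the restriction $i_b^*\sigma$ represents $[\omega_b]$, so $i_b^*\sigma - \omega_b$ is exact on the fibre $P_b$; the task is to correct $\sigma$ by a \emph{global} exact form so that equality on fibres holds simultaneously for all $b$. Equipping the vertical tangent bundle with a fibrewise metric, fibrewise Hodge theory produces, for each $b$, a unique coexact $1$-form $\mu_b$ on $P_b$ with $d\mu_b = i_b^*\sigma - \omega_b$, and uniqueness forces $\mu_b$ to depend smoothly on $b$. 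Choosing a horizontal distribution, I would extend the family $(\mu_b)$ to a $1$-form $\widetilde\mu$ on $P$ vanishing on horizontal vectors, and put $\tau = \sigma - d\widetilde\mu$. Then $\tau$ is closed, $[\tau]=a$, and $i_b^*\tau = i_b^*\sigma - d\mu_b = \omega_b$. Verifying that the Hodge-theoretic choice is smooth in $b$ — equivalently that the fibrewise Green operators patch over a trivialising cover — is the technical heart of the argument, and compactness of $M$ is exactly what makes it available.

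It remains to establish non-degeneracy for large $K$. Write $2m = \dim M$ and $2\ell = \dim B$, so that $\dim P = 2n$ with $n = m+\ell$, and I would show that $\omega_K^{\,n}$ is nowhere zero. Expanding,
\[
\omega_K^{\,n} = \binom{n}{\ell} K^{\ell}\, \tau^m \wedge (\pi^*\beta)^\ell \; + \sum_{j<\ell} \binom{n}{j} K^{j}\, \tau^{\,n-j}\wedge(\pi^*\beta)^{j},
\]
where $(\pi^*\beta)^{j}=0$ for $j>\ell$ since $\beta$ is a top-degree-respecting form on $B$. Because $\pi^*\beta$ annihilates vertical vectors, in the leading term the factor $(\pi^*\beta)^\ell$ must be evaluated on a horizontal frame while $\tau^m$ is evaluated on a vertical frame, where it equals $\omega_b^m \neq 0$; hence $\tau^m\wedge(\pi^*\beta)^\ell$ is a nowhere-vanishing top form. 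The remaining summands are of strictly lower order in $K$, so on the compact manifold $P$ they are dominated once $K$ exceeds a uniform threshold $K_0$, and for $K > K_0$ the form $\omega_K$ is symplectic. The only genuinely global and analytic input is the smooth fibrewise Hodge correction used to build $\tau$; everything afterwards is pointwise linear algebra together with the compactness of $P$.
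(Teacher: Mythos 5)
The paper itself offers no proof of Theorem \ref{thm:M-S}: it is quoted from \cite[Theorem 6.3]{M-S} and \cite{T}, so your attempt can only be measured against Thurston's standard argument given there. Your overall route is the same as that argument: reduce to producing a single closed $2$-form $\tau$ on $P$ with $[\tau]=a$ and $i_b^*\tau=\omega_b$ for all $b$, set $\omega_K=\tau+K\pi^*\beta$, and obtain nondegeneracy for large $K$; your expansion of $\omega_K^{\,n}$, the observation that the only surviving shuffle in the leading term pairs $\tau^m$ with vertical vectors (where it equals $\omega_b^m$) and $(\pi^*\beta)^\ell$ with horizontal ones, and the compactness argument for a uniform threshold are all correct and are exactly the standard computation. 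The genuine difference is how $\tau$ is built. McDuff--Salamon do it softly: cover $B$ by finitely many contractible open sets $U_\alpha$ over which the fibration is symplectically trivial, so that on $\pi^{-1}(U_\alpha)$ there is a closed form $\tau_\alpha$ restricting to $\omega_b$ on every fibre; since $U_\alpha$ is contractible and $\tau_\alpha$, $\sigma$ restrict to cohomologous forms on a fibre, one has $\tau_\alpha-\sigma=d\lambda_\alpha$ on $\pi^{-1}(U_\alpha)$, and one glues by $\tau=\sigma+\sum_\alpha d\bigl((\rho_\alpha\circ\pi)\lambda_\alpha\bigr)$ with $\{\rho_\alpha\}$ a partition of unity on $B$; the point is that $d(\rho_\alpha\circ\pi)$ vanishes on vertical vectors, so the restriction of $\tau$ to the fibre over $b$ is the convex combination $\sum_\alpha\rho_\alpha(b)\,\omega_b=\omega_b$. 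Your fibrewise Hodge construction accomplishes the same thing at the cost of parametrized elliptic theory: it does work (the kernels involved, the harmonic $1$-forms on the fibres, have dimension equal to the first Betti number of $M$, independent of $b$, so the fibrewise Green operators vary smoothly), but you assert rather than prove this smooth dependence --- uniqueness of the coexact primitive by itself does not yield smoothness in $b$ --- and it is precisely the step that the partition-of-unity argument renders unnecessary.

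There is one genuine gap you should repair: the theorem assumes $i_b^*a=[\omega_b]$ for \emph{some} $b\in B$, while your construction of $\tau$ begins with ``for each $b$ the restriction $i_b^*\sigma$ represents $[\omega_b]$''. The passage from one $b$ to all $b$ is not automatic; it is exactly where connectedness of $B$ and the symplectic structure group enter. Over a symplectically trivializing chart the fibres are identified symplectically and the inclusions $i_{b'}$ are homotopic in $P$ to $i_b$, so the set of points $b$ at which $i_b^*a=[\omega_b]$ holds is open and closed in $B$, nonempty by hypothesis, hence all of $B$. Without this remark, your auxiliary lemma is proved under a stronger hypothesis than the theorem actually provides.
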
 

Another significant result, which motivates us to investigate the extension 
of a symplectic class, is related to a reduction problem of the
structural group of a bundle. 
To describe the result,  we recall a subgroup of $\text{Symp}(M, \omega)$.    
A smooth map $\phi \in \text{Symp}(M, \omega)$ is called  
a {\it Hamiltonian symplectomorphism} if $\phi$ is the time $1$-map $\phi_1$ 
of the Hamiltonian isotopy $\phi_t$, $t \in [0, 1]$; that is, $\phi_0 = id_M$, 
$\phi_t \in \text{Symp}(M, \omega)$ for any $t$ and 
$\omega (X_t, \cdot) = dH_t$ with 
$\frac{d}{dt}\phi_t = X_t \circ \phi_t$   
for some time dependent function $H_t : M \to {\mathbb R}$. 
We denote by $\text{Ham}(M, \omega)$ the subgroup of $\text{Symp}(M, \omega)$ 
consisting of Hamiltonian symplectomorphisms. 
%symplectomorphisms is a normal subgroup of $\text{Symp}(M, \omega)$; 
%for example see \cite[Proposition 10.2]{M-S}. 

Let ${\mathcal F} : (M, \omega) \to P \to B$ be a fibre bundle  which is not 
necessarily in the category of smooth manifolds. 
We shall say that the bundle ${\mathcal F}$ is {\it symplectic}  
if its structural group is the group of symplectomorphisms $\text{Symp}(M, \omega)$.   
A symplectic  bundle $(M, \omega) \to P \to B$ is said to be 
{\it Hamiltonian} if the structural group may be reduced to 
the subgroup $\text{Ham}(M, \omega)$. 

The following result due to Lalonde and McDuff gives  
a characterization of Hamiltonian bundles. 

\begin{thm} \cite[Lemma 2.3]{L-M}
\label{thm:L-M} 
A symplectic bundle $(M, \omega) \to P \to B$ over a simply-connected
 space $B$ is Hamiltonian if and only if the class 
$[\omega] \in H^2(M; {\mathbb R})$ extends to a class of $H^2(P; {\mathbb R})$. 
\end{thm}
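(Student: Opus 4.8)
The plan is to prove both implications through a single geometric bridge: a symplectic bundle over a simply-connected base is Hamiltonian if and only if it carries a \emph{closed} $2$-form $\tau$ on $P$ whose restriction to each fibre $P_b$ is the symplectic form $\omega_b$. Granting this bridge, the class $[\tau]$ manifestly extends $[\omega]$, and conversely any cohomological extension can be upgraded to such a $\tau$. First I would reduce the structural group. Since $B$ is simply connected, the composite $B \to B\,\mathrm{Symp}(M,\omega) \to B\pi_0(\mathrm{Symp}(M,\omega))$ lands in a $K(\pi_0,1)$ and is therefore null-homotopic, so the classifying map lifts to $B\,\mathrm{Symp}_0(M,\omega)$ and we may assume the monodromy lies in the identity component $\mathrm{Symp}_0(M,\omega)$. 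The reduction to $\mathrm{Ham}(M,\omega)$ is then governed by the flux homomorphism, which descends to an isomorphism $\mathrm{Symp}_0(M,\omega)/\mathrm{Ham}(M,\omega)\cong H^1(M;\mathbb{R})/\Gamma$ with $\Gamma$ the flux group; thus $B\,\mathrm{Ham} \to B\,\mathrm{Symp}_0 \to B\big(H^1(M;\mathbb{R})/\Gamma\big)$ is a fibration, and the bundle is Hamiltonian precisely when the induced map $B \to B\big(H^1(M;\mathbb{R})/\Gamma\big)$ is null-homotopic.

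For the forward implication, assume the bundle is Hamiltonian. Choosing a connection whose holonomy lies in $\mathrm{Ham}(M,\omega)$, the connection $1$-form takes values in Hamiltonian vector fields with normalized (fibrewise mean-zero) generating functions, and the Guillemin--Lerman--Sternberg coupling construction produces a closed $2$-form $\tau$ on $P$ with $i_b^*\tau = \omega_b$. Hence $[\tau]\in H^2(P;\mathbb{R})$ restricts to $[\omega]$, giving the desired extension. This direction is essentially formal once the coupling form is in hand.

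The substance is the converse. Suppose $a \in H^2(P;\mathbb{R})$ restricts to $[\omega]$ and choose a closed representative $\sigma$; on each fibre $i_b^*\sigma$ is cohomologous to $\omega_b$, so $i_b^*\sigma - \omega_b = d\lambda_b$. The main technical step is to organize the fibrewise primitives $\lambda_b$ into a single global $1$-form $\tilde\lambda$ on $P$ with $i_b^*\tilde\lambda = \lambda_b$, using a partition of unity on $B$ together with local triviality and a smoothly varying (e.g. Hodge-theoretic) choice of primitive. Setting $\tau = \sigma - d\tilde\lambda$ then yields a closed form, still representing $a$, with $i_b^*\tau = \omega_b$ exactly; the $\tau$-orthogonal distribution is a symplectic connection whose parallel transport consists of symplectomorphisms, and closedness of $\tau$ forces the holonomy around every loop to have vanishing flux, so that—$B$ being simply connected—the structural group reduces to $\mathrm{Ham}(M,\omega)$. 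The hardest part is precisely this upgrade from a cohomological extension to a closed $2$-form restricting \emph{exactly} to $\omega_b$, and the verification that closedness translates into the vanishing of flux. Phrased spectral-sequentially, the point is that the transgression $d_2[\omega] \in H^2(B;H^1(M;\mathbb{R}))$ computes the flux obstruction, so that its vanishing is simultaneously necessary for the extension and sufficient for the Hamiltonian reduction.
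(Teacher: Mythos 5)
The paper contains no proof of this statement: it is quoted from Lalonde--McDuff \cite[Lemma 2.3]{L-M} and used as a black box (in Corollary \ref{cor:reduction} and Theorem \ref{thm:Ham}), so your proposal can only be measured against the literature. What you sketch is the standard geometric argument, essentially the one in McDuff--Salamon \cite{M-S}: the Guillemin--Lerman--Sternberg coupling form gives the forward direction, and for the converse one upgrades a cohomological extension to a closed $2$-form restricting exactly to $\omega_b$ (the same patching step that occurs in the proof of Theorem \ref{thm:M-S}) and then shows that the holonomy of the induced symplectic connection is Hamiltonian. As an outline of that proof your proposal is sound, and you correctly identify the holonomy step as the technical heart.

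There is, however, one genuine gap relative to the statement being proved. The theorem concerns a symplectic bundle over an arbitrary simply-connected \emph{space} $B$, with ``Hamiltonian'' meaning reduction of the structural group; your argument uses partitions of unity, connections, parallel transport and de Rham forms on $P$, hence requires $B$ and $P$ to be smooth manifolds. That restriction matters here, since the paper invokes the theorem over general simply-connected CW bases. Lalonde--McDuff's proof is homotopy-theoretic for precisely this reason: it runs through the fibration $B\mathrm{Ham}(M,\omega) \to B\mathrm{Symp}_0(M,\omega) \to B\bigl(H^1(M;{\mathbb R})/\Gamma_\omega\bigr)$ --- which you set up in your first paragraph but never actually use --- together with the flux exact sequence $\pi_1(\mathrm{Ham}(M,\omega)) \to \pi_1(\mathrm{Symp}_0(M,\omega)) \to \Gamma_\omega \to 0$; by obstruction theory the problem reduces to bundles over $S^2$, where both conditions become the vanishing of the flux of the clutching loop (for the extension condition this is because the $d_2$-differential on $[\omega]$ in the Wang sequence is exactly that flux). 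To obtain a complete proof of the statement as quoted, you should either carry out this homotopy-theoretic reduction, or restrict your geometric argument to manifold bases and supply the two facts you assert: that closedness of $\tau$ forces the parallel-transport isotopy around a \emph{contractible} loop to have zero flux (Stokes over a bounding disc --- this is where simple connectivity and $d\tau=0$ actually enter), and that holonomy lying pointwise in $\mathrm{Ham}(M,\omega)$ genuinely yields a reduction of the structural group.
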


We recall here the key consideration in the proof of 
\cite[Proposition 3.1]{K-M}.

\begin{rem}
\label{rem:coupling}
Let $\{E_r, d_r\}$ be the Leray-Serre spectral sequence of a fibration $P \to B$ 
over simply-connected base $B$. 
Suppose that the fibre is a  
$2m$-dimensional symplectic manifold $(M, \omega)$.  
Assume further that $d_2([\omega])=0$ in the $E_2$-term 
for $[\omega] \in E_2^{0,2}=H^2(M; {\mathbb R})$. 
Then $[\omega]$ is a permanent cycle. In fact, we have 
$$
0 = d_3([\omega]^{m+1})=(m+1)[\omega]^m\otimes d_3([\omega]) \in E_3^{2m, 3} 
\subset H^{2m}(M) \otimes H^3(B). 
$$
This implies that $d_3([\omega]) = 0$. 
\end{rem}

The argument in Remark \ref{rem:coupling} enables us to deduce 
the following proposition. 

\begin{prop}
 \label{prop:simply-connected}
Let $(M, \omega) \to P \to B$ be a fibration as 
in Remark \ref{rem:coupling}. If $H^1(M; {\mathbb R}) = 0$, then   
the symplectic class $[\omega]$ extends to  a cohomology class of $P$.
\end{prop}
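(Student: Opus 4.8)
The plan is to show that the hypothesis $H^1(M;\mathbb{R})=0$ forces $[\omega]$ to be a permanent cycle in the Leray--Serre spectral sequence $\{E_r, d_r\}$ of $P \to B$, and then to read off the extension from the edge homomorphism. Since $B$ is simply connected the coefficient system is trivial, so $E_2^{p,q} \cong H^p(B;\mathbb{R}) \otimes H^q(M;\mathbb{R})$, and $E_\infty^{0,2}$ is precisely the image of the restriction $i^* : H^2(P;\mathbb{R}) \to H^2(M;\mathbb{R}) = E_2^{0,2}$. Thus $[\omega]$ extends to a class of $P$ if and only if it survives to $E_\infty^{0,2}$, equivalently, if and only if every differential emanating from $[\omega]$ vanishes.

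First I would localize which differentials can be nonzero. The differential $d_r$ carries $E_r^{0,2}$ into $E_r^{r,3-r}$, and for $r \ge 4$ the target has negative fibre degree, hence is zero; so only $d_2$ and $d_3$ are at issue. For $d_2 : E_2^{0,2} \to E_2^{2,1}$, the hypothesis $H^1(M;\mathbb{R})=0$ gives $E_2^{2,1} = H^2(B;\mathbb{R}) \otimes H^1(M;\mathbb{R}) = 0$, so $d_2([\omega]) = 0$ automatically and $[\omega]$ passes unchanged to $E_3^{0,2}$. This is the only place where the assumption $H^1(M;\mathbb{R})=0$ is used: it guarantees the vanishing of $d_2([\omega])$ that is taken as hypothesis in Remark \ref{rem:coupling}.

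It then remains to verify $d_3([\omega]) = 0$, and here I would invoke the computation recorded in Remark \ref{rem:coupling}. The inputs are: (i) $[\omega]$ already survives to $E_3$ by the previous step, so $[\omega]^m$ is defined in $E_3^{0,2m}$ and is nonzero there, being the image of the volume class $[\omega]^m \neq 0 \in H^{2m}(M;\mathbb{R})$ of the closed manifold $M$ under the injection $E_3^{0,2m} = \ker d_2 \hookrightarrow E_2^{0,2m}$; and (ii) $[\omega]^{m+1}=0$ because $\dim M = 2m$, whence $H^{2m+2}(M;\mathbb{R})=0$. The derivation property of $d_3$ then yields
\[
0 = d_3\bigl([\omega]^{m+1}\bigr) = (m+1)\,[\omega]^m \cdot d_3([\omega]),
\]
and since multiplication by the nonzero top class $[\omega]^m$ is injective---it amounts to tensoring with a nonzero element of the one-dimensional space $H^{2m}(M;\mathbb{R})$---we conclude $d_3([\omega]) = 0$.

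Combining the three steps, $[\omega]$ is a permanent cycle, hence lies in $E_\infty^{0,2}$, and therefore extends to a class in $H^2(P;\mathbb{R})$. The one genuinely delicate point---already isolated in Remark \ref{rem:coupling}---is the non-degeneracy argument of the third step: one must know that $[\omega]^m$ survives as a \emph{nonzero} element of the $E_3$-page, so that it may be cancelled from the displayed equation to force $d_3([\omega])=0$. Everything else is a formal consequence of the multiplicative structure of the spectral sequence together with the edge homomorphism.
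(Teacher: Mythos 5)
Your proof is correct and is exactly the argument the paper intends: the hypothesis $H^1(M;\mathbb{R})=0$ kills the target $E_2^{2,1}=H^2(B;\mathbb{R})\otimes H^1(M;\mathbb{R})$ of $d_2$, Remark \ref{rem:coupling}'s derivation trick $0=d_3([\omega]^{m+1})=(m+1)[\omega]^m\cdot d_3([\omega])$ then forces $d_3([\omega])=0$, and the edge homomorphism identifies the surviving class with a restriction from $H^2(P;\mathbb{R})$. Your added care about why $[\omega]^m$ remains nonzero on the $E_3$-page and why multiplication by it is injective there fills in details the paper leaves implicit, but the route is the same.
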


Thus one might take an interest in the extension problem of 
a symplectic class in the case where $H^1(M; {\mathbb R}) \neq 0$ 
for the given fibre $M$. In what follows, for a fibration  $(M, \omega) \to P \to B$, 
we may call the class $[\omega]$ {\it extendable} in the fibration if 
it extends to a cohomology class of the total space $P$. 

We deal with such an extension problem 
assuming that the even dimensional torus is rationally separable from
the fibre.   
In order to explain the separability more precisely, 
we recall some terminology
from rational homotopy theory. We refer the reader to \cite{B-G} and \cite{F-H-T}  for more details.

We denote by $\wedge V$ the free graded algebra generated by 
a graded vector space $V$.  
Let $A_{PL}(X)$ be the differential graded algebra (DGA) 
of polynomial forms on a space $X$. We shall say that a DGA $(\wedge V, d)$ is
a {\it model} for $X$ if there exits a quasi-isomorphism 
$q : (\wedge V, d) \to A_{PL}(X)$, namely an isomorphism on the cohomology. 
Moreover the model is called {\it minimal} if $d(v)$ is decomposable in $\wedge V$ 
for any $v \in V$. 
%Here $\wedge V$ denotes the free graded algebra generated by a graded vector
%space $V$ over $\K$.  

\begin{defn} 
\label{defn:separable}
A closed symplectic manifold $(M, \omega)$ is 
${\mathbb T}^{2k}$-{\it separable} if it  admits a
 minimal model $(\wedge V, d)$ of the form 
$$
(\wedge V, d)=\bigotimes_{i=1}^k(\wedge (t_{i_1}, t_{i_2})), 0)
\otimes (\wedge Z, d)
$$ 
for which $(Z)^1= 0$ and there exists a cycle $\beta \in Z^2$ such that in $H^2(M; {\mathbb R})$ 
 $$[\omega] = q \gamma([\beta]) 
+ \sum_{i=1}^kq_i\gamma ([t_{i_1}t_{i_2}])$$ 
for some non-zero real numbers $q$ and $q_i$. 
Here $\gamma : H^*(M; \K) \to H^*(M; {\mathbb R})$ denotes the map induced
 by the inclusion  $\K \to {\mathbb R}$. 
\end{defn}

\begin{ex}
\label{ex:T-separable} 
Let $({\mathbb T}^{2k}, \beta')$ be the $2k$-dimensional 
torus with the standard symplectic structure $\beta'$ 
and let $({\mathbb T}^{2k}, \beta') 
\stackrel{i}{\to} P \stackrel{\pi}{\to} (M, \beta^M)$  be a Hamiltonian
 bundle over a simply-connected symplectic manifold $(M, \beta^M)$. 
Suppose that $\beta^M$ is rational in the sense that the class
 $[\beta^M]$ 
is in the image of $\gamma : H^2(M; \K) \to H^2(M; {\mathbb R})$ up to
the multiplication by a non-zero scalar.  Then the total space $P$ is a 
${\mathbb T}^{2k}$-separable symplectic manifold with an appropriate
 symplectic structure. 
 
To see this, let $(\wedge V, d)$ be a minimal model for $M$. Since $\beta^M$ is
 rational, we can choose a cycle $\beta \in V^2$ so that 
$\gamma [\beta] = q[\beta^M]$ for some non-zero real number $q$. It
 follows from the proof of \cite[Theorem 1.2]{St} that 
$P$ admits a Sullivan model of the form 
$(\wedge V, d)\otimes \bigotimes_{i=1}^k(\wedge (t_{i_1}, t_{i_2}), 0)$
in which $[\beta'] = i^*\gamma (\sum [t_{i_1}t_{i_2}])$. By virtue of 
 Theorem \ref{thm:M-S}, we see that,  
for every sufficient large real number $K$, 
there exists a symplectic form $\omega_K$ on $P$ such that 
$$
[\omega_K] = \sum_{i=1}^k \gamma  [t_{i_1}t_{i_2}] + K\pi^* [\beta^M] = 
 \sum_{i=1}^k \gamma  [t_{i_1}t_{i_2}] + Kq^{-1}\gamma [\beta].  
$$  
By definition, the symplectic manifold $(P, \omega_K)$ is 
${\mathbb T}^{2k}$-separable. 
Moreover we see that  
$P$ is a nilpotent space. This follows from 
the naturality of the action of the 
fundamental group on the higher homotopy groups. 
\end{ex}

Let $(M, \omega) \to P \to B$ be a fibration over a simply-connected base 
with symplectic fibre. 
The purpose of this paper is to exhibit a necessary and sufficient condition 
for the symplectic class $[\omega]$ to extend to a cohomology class of $P$ 
provided $(M, \omega)$ is nilpotent ${\mathbb T}^{2k}$-separable for some
integer $k$. 

Unless otherwise explicitly stated, we assume that a space is well-based and 
has the homotopy type of a connected CW complex with rational cohomology of finite type. 
For a nilpotent space $X$, we denote by  $X_\K$ the rationalization of the space $X$. 
We shall write $H^*(X)$ for the cohomology of a space $X$ with coefficients in
$\K$. 

Let $(M, \omega)$ be a nilpotent ${\mathbb T}^{2k}$-separable symplectic
manifold which admits the minimal model described in Definition \ref{defn:separable}. 
Then we can choose a map $p_M : M_\K \to {\mathbb T}^{2k}_\K$ so that 
$p_M^*(s_{i_\lambda})= t_{i_\lambda}$ for appropriate generators 
 $s_{1_\lambda}, ..., s_{k_\lambda}$ ($\lambda = 1, 2$) 
of $H^*({\mathbb T}^{2k})$, where 
$p_M^* : H^*({\mathbb T}^{2k})= H^*({\mathbb T}^{2k}_\K) \to H^*(M_\K)=
H^*(M)$ denotes the map induced by $p_M$.       
In what follows, a ${\mathbb T}^{2k}$-separable symplectic manifold $(M, \omega)$ is considered 
one equipped with such a map $p_M: M_\K \to {\mathbb T}^{2k}_\K$. 

Let $\text{aut}(M)$ be the monoid of self-homotopy equivalences of
a space $M$ and $\aut{M}$ its identity component.  We denote by $B{\mathcal G}$ 
the classifying space of a monoid ${\mathcal G}$ with identity. 
When considering the extension problem of a symplectic class in our setting, 
a linear map form $H^1({\mathbb T}^{2k})$ to $H^2(B\aut{M})$ plays an important role. 

\begin{defn}
\label{defn:kappa}
Let  $(M, \omega)$ be a ${\mathbb T}^{2k}$-separable symplectic manifold.   
The {\it detective map} $\kappa : H^1({\mathbb T}^{2k}) \to 
H^2(B\aut{M})$ of $M$ is defined to be the composite 
$$
\xymatrix@C15pt{
 H^1({\mathbb T}^{2k}) \ar[r]^{p_M^*} & H^1(M) \ar[r]^(0.4){ev^*} &   
H^1(\aut{M}) \ar[r]^(0.45){\tau} & H^2(B\aut{M}), 
}
$$
where $ev : \aut{M} \to M$ is the evaluation map at the basepoint of $M$ and 
the map $\tau : H^1(\aut{M}) \to H^2(B\aut{M})$ is the transgression of
the Leray-Serre spectral sequence of the universal fibration 
$\aut{M}\to E{\aut{M}} \to B\aut{M}$. 
\end{defn}

To describe our main theorem and its applications, 
we moreover recall from \cite{May1} some terminology and results. 

For a given space $M$, 
let $M{\mathcal W}$ be the {\it category of fibres} described in \cite[Example 6.6 (ii)]{May1}; that is,  
$X \in M{\mathcal W}$ if $X$ is of the same homotopy type of $M$ and the morphisms 
in $M{\mathcal W}$  are homotopy equivalences. 
We shall say that a map $E \to B$ is an $M{\mathcal W}$-{\it fibration} over $B$ if it is a fibration with fibre in $M{\mathcal W}$. 
Let $\pi: E \to B$ and $\nu: E' \to B'$ be $M{\mathcal W}$-fibrations. 
An $M{\mathcal W}$-{\it map} $(f, g) : \pi \to \nu$ is a pair of maps $f : E \to E'$ and 
$g : B \to B'$ such that  
$\nu\circ f = g \circ \pi$ and $f :\pi^{-1}(b) \to \nu^{-1}(g(b))$ is in $M{\mathcal W}$ for each $b \in B$. 
For any $M{\mathcal W}$-fibrations $\pi$ and $\nu$ over $B$, we write $f: \pi \to \nu$ for 
$(f, 1_B) : \pi \to \nu$. 

Let $\pi: E \to B$ and $\nu: E' \to B$ be $M{\mathcal W}$-fibrations over $B$. 
By definition a homotopy over $B$ from $g : \pi \to \nu$ to $g ':  \pi \to \nu$ is a $M{\mathcal W}$-map 
$(H, h) : \pi\times 1_I  \to \nu$; that is, it fits into the commutative diagram 
$$
\xymatrix@C30pt@R15pt{
E\times I \ar[r]^H \ar[d]_{\pi\times 1_I} & E' \ar[d]^{\nu} \\
B\times I \ar[r]_h & B  
}
$$
in which $H(x, 0)=g(x)$, $H(x, 1)=g'(x)$ for $x\in E$ and 
$h(b, t) = b$ for $(b, t) \in B\times I$. We write $g \simeq g'$ if there exists a homotopy over $B$ form $g$ to $g'$. 
We shall say that $M{\mathcal W}$-fibrations $\pi$ and $\pi'$ are homotopy equivalent if 
there exist maps $f: \pi \to \pi'$ and $f' : \pi' \to \pi$ such that $f'f \simeq 1$ and $ff' \simeq 1$. 

Let ${\mathcal E}M{\mathcal W}(B)$ be the set of homotopy equivalence classes of $M{\mathcal W}$-fibrations over $B$.  
We recall the universal cover $\pi_0(\text{aut}(M)) \to B\aut{M} \to 
B\text{aut}(M)$. 
Assume that $B$ is simply-connected.  
Then every map $B \to B\text{aut}{M}$ 
factors through $B\aut{M}$.  
Thus the result \cite[Theorem 9.2]{May1} allows us to conclude that the map $\Psi$, which sends 
a map $f : B \to B\aut{M}$ to the pullback of the universal $M$-fibration  
$M \to M_{\aut{M}} \to B\aut{M}$ by $f$, gives rise to a natural isomorphism   
$$
\Psi : [B, B\aut{M}] \stackrel{\cong}{\longrightarrow}  {\mathcal E}M{\mathcal W}(B). 
$$
We also refer the reader to \cite{Stasheff} for the classifying theorem of fibrations.  
Let ${\mathcal F}$ be an $M{\mathcal W}$-fibration. 
As usual, we call a representative $f : B \to B\aut{M}$ of $\Psi^{-1}([{\mathcal F}])$ 
the {\it classifying map} for ${\mathcal F}$.

We are now ready to describe our main theorem.

\begin{thm} 
\label{thm:kappa}
Let $(M, \omega)$ be a nilpotent ${\mathbb T}^{2k}$-separable symplectic manifold and 
${\mathcal F} : (M, \omega)  \to P \stackrel{}{\to}  B$ a  fibration over a simply-connected space $B$. 
Let $f : B \to B\aut{M}$ be the classifying map for ${\mathcal F}$. 
Then the symplectic class $[\omega] \in H^2(M; {\mathbb R})$ extends to 
a cohomology class of $P$ 
if and only if the composite 
$$H^*(f)\circ \kappa : H^1({\mathbb T}^{2k}) \to H^2(B\aut{M}) \to H^2(B)
$$ 
is trivial, where $\kappa$ is the detective map of $M$. 
\end{thm}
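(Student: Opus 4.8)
The plan is to translate extendability into the vanishing of a single differential in the Leray--Serre spectral sequence $\{E_r,d_r\}$ of $\mathcal F$ (with real coefficients) and then to read off that differential from the classifying map $f$. First I would recall that, by the edge homomorphism, a class in $E_2^{0,2}=H^2(M;\mathbb R)$ lies in the image of $i^*:H^2(P;\mathbb R)\to H^2(M;\mathbb R)$ precisely when it is a permanent cycle. Since $[\omega]\in E_2^{0,2}$, the only possibly nonzero differentials on it are $d_2$ and $d_3$, and Remark~\ref{rem:coupling} shows that $d_2([\omega])=0$ already forces $d_3([\omega])=0$. Hence the whole problem reduces to proving
\[ d_2([\omega])=0 \iff H^*(f)\circ\kappa=0. \]

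Next I would compute $d_2([\omega])$ using the separable structure and the multiplicativity of $d_2$. From Definition~\ref{defn:separable} the model is a tensor product, so $H^*(M)\cong \Lambda([t_{i_\lambda}])\otimes H^*(\wedge Z,d)$; in particular $(Z)^1=0$ gives that $H^1(M;\mathbb R)$ has the $2k$ classes $\gamma[t_{i_\lambda}]$ as a basis. Writing $\Omega=q\beta+\sum_i q_i t_{i_1}t_{i_2}$ for a rational cocycle with $\gamma[\Omega]=[\omega]$, multiplicativity gives
\[ d_2([\omega])=q\,\gamma\, d_2([\beta])+\sum_i q_i\bigl(\gamma d_2([t_{i_1}])\cdot\gamma[t_{i_2}]-\gamma[t_{i_1}]\cdot\gamma d_2([t_{i_2}])\bigr). \]
The crucial sub-claim is that $d_2([\beta])=0$. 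Here I would use nondegeneracy: since $[\omega]^m\neq 0$ and $[t_{i_1}t_{i_2}]^2=0$, the only monomial of $[\omega]^m$ reaching the top class forces $[\beta]^{m-k}\neq 0$, i.e. $[\beta]$ restricts to a volume class on the $H^1$-trivial factor $(\wedge Z,d)$, whence $[\beta]^{m-k+1}=0$. Applying $d_2$ to $0=[\beta]^{m-k+1}$ yields $(m-k+1)[\beta]^{m-k}\, d_2([\beta])=0$; writing $d_2([\beta])=\sum_{j,\mu}[c_{j\mu}]\otimes[t_{j\mu}]$ and noting that the classes $[\beta]^{m-k}[t_{j\mu}]\in H^{2(m-k)+1}(M)$ are linearly independent, I conclude $[c_{j\mu}]=0$ for all $j,\mu$, i.e. $d_2([\beta])=0$. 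This coupling argument, together with the observation that separability makes $[\beta]$ a genuine symplectic class on the factor with vanishing $H^1$ (the situation of Proposition~\ref{prop:simply-connected}), is the step I expect to be the main obstacle.

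Granting $d_2([\beta])=0$, the displayed formula becomes $d_2([\omega])=\sum_i q_i\bigl(d_2[t_{i_1}]\otimes[t_{i_2}]\mp d_2[t_{i_2}]\otimes[t_{i_1}]\bigr)$ in $E_2^{2,1}=H^2(B)\otimes H^1(M)$. Since the $\gamma[t_{i_\lambda}]$ form a basis of $H^1(M;\mathbb R)$ and every $q_i\neq 0$, collecting coefficients shows that $d_2([\omega])=0$ if and only if $d_2([t_{i_\lambda}])=0$ in $H^2(B)$ for all $i$ and $\lambda$.

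It remains to identify $d_2([t_{i_\lambda}])$ with $H^*(f)\circ\kappa(s_{i_\lambda})$. Because $\mathcal F$ is the pullback of the universal $M$-fibration $M\to M_{\aut{M}}\to B\aut{M}$ along $f$, naturality of the spectral sequence gives $d_2^{\mathcal F}=H^*(f)\circ d_2^{\mathrm{univ}}$ on $H^1(M)$. The evaluation map induces a morphism of fibrations over $B\aut{M}$ from $\aut{M}\to E\aut{M}\to B\aut{M}$ to the universal $M$-fibration whose restriction to fibres is $ev$, so naturality of the transgression yields $d_2^{\mathrm{univ}}|_{H^1(M)}=\tau\circ ev^*$. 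Since $\kappa=\tau\circ ev^*\circ p_M^*$ and $p_M^*(s_{i_\lambda})=[t_{i_\lambda}]$ by construction, we obtain $H^*(f)\circ\kappa(s_{i_\lambda})=d_2^{\mathcal F}([t_{i_\lambda}])$, which combined with the previous paragraph gives the asserted equivalence. Throughout, the passage between $\K$- and $\mathbb R$-coefficients is harmless because $\mathbb R$ is faithfully flat over $\K$, so $\gamma$ is injective and commutes with all the differentials.
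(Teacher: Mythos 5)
Your proof is correct, and although it follows the same skeleton as the paper's---reduce extendability to $d_2([\omega])=0$ via the edge homomorphism and Remark \ref{rem:coupling}, split $d_2([\omega])$ using separability, and treat $[\beta]$ and the $[t_{i_\lambda}]$ separately---both key sub-steps are carried out by genuinely different means. For $d_2([\beta])=0$ (the paper's Proposition \ref{prop:A}), the paper passes to the universal fibration $M\to M_{\aut{M}}\to B\aut{M}$ and uses the Eilenberg--Moore spectral sequence together with the explicit Brown--Szczarba model for $\aut{M}$ (in particular Lemma \ref{lem:Mu}) to show $ev^*([\beta])=0$, so that $[\beta]$ lifts to $H^2(M_{\aut{M}})$ and is a permanent cycle universally; you instead argue entirely inside the Leray--Serre spectral sequence of ${\mathcal F}$, applying the Leibniz rule to $[\beta]^{n-k+1}=0$ and using that multiplication by the top class $[\beta]^{n-k}$ of $H^*(\wedge Z,d)$ is injective on $H^1(M)=\K\{[t_{i_\lambda}]\}$---an elementary Poincar\'e-duality argument requiring no function-space models at all. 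For the identification of $d_2([t_{i_\lambda}])$ with $H^*(f)\kappa(s_{i_\lambda})$ (the paper's Proposition \ref{prop:B} together with Lemma \ref{lem:kappa}), the paper again goes through the Eilenberg--Moore spectral sequence and the Gugenheim--May comparison (Proposition \ref{prop:G-M}); you use instead that $ev:\aut{M}\to M$ is $\aut{M}$-equivariant and hence induces a map of fibrations over $B\aut{M}$ from $\aut{M}\to E\aut{M}\to B\aut{M}$ to the universal $M$-fibration, whence naturality of $d_2$ gives the element-wise identity $d_2^{\text{univ}}([t_{i_\lambda}])=\tau(ev^*[t_{i_\lambda}])=\kappa(s_{i_\lambda})$, and pulling back along $f$ finishes the argument. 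Your route is leaner and bypasses Section 2 of the paper entirely; what the paper's heavier machinery buys is the explicit determination of $\text{Im}\,\kappa$ and its dimension $2k$ (Lemma \ref{lem:kappa}), which Theorem \ref{thm:kappa} itself does not need but which is essential for Proposition \ref{prop:more_ex2}, Corollary \ref{cor:more_ex3} and Corollary \ref{cor:reduction}. One point you should make explicit: the existence of the fibration map $E\aut{M}\to M_{\aut{M}}$ over $B\aut{M}$ restricting to $ev$ on fibres is a standard consequence of May's two-sided bar construction (equivariance of the evaluation), but as written you assert it rather than justify it, so a citation to \cite{May1} or a line of proof is needed there.
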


The novelty here is that we can describe a criterion
for the given symplectic class to extend to a cohomology class of the total space 
in terms of the detective map and the
classifying map for the fibration. The advantage of the criterion is illustrated below 
with many applications.

The ${\mathbb T}^{2k}$-separable manifold $P$ constructed in 
Example \ref{ex:T-separable} may be trivial, namely the product space
of $M$ and  ${\mathbb T}^{2k}$. 
However the structure of the Sullivan model for the monoid $\aut{P}$ is in general complicated even in that case.   
On the other hand, we turn this fact to our advantage. 
Indeed, the complexity and Theorem \ref{thm:kappa} enable us to obtain many essentially different 
non-trivial fibrations  with symplectic fibre $(M, \omega)$ in which the class $[\omega]$ is extendable; 
see Corollary  \ref{cor:more_ex3} and Example \ref{ex:Rationalized fibration} below. 
We next deal with such a global nature of fibrations.

Let $B$ be a simply-connected co-H-space with the comultiplication $\Delta : B \to B\vee B$. 
Then the homotopy set of based maps $[B, B\aut{M}]_*$ has a product defined by 
$$
[f]\ast [g]= [\nabla \circ f\vee g \circ \Delta]
$$ for $[f]$ and  $[g] \in [B, B\aut{M}]_*$, where $\nabla : B\aut{M}\vee B\aut{M} \to B\aut{M}$ is  
the folding map. 
Since $B\aut{M}$ is simply-connected, it follows that the natural map $\theta$ from the homotopy set of based maps 
$[B, B\aut{M}]_*$ to the the homotopy set $[B, B\aut{M}]$ is bijective. Thus the product on $[B, B\aut{M}]_*$ gives rise to 
that on $[B, B\aut{M}]$. In consequence, ${\mathcal E}M{\mathcal W}(B)$ has a product via the bijection $\Psi$ mentioned above. 
Observe that the product $\ast$ on ${\mathcal E}M{\mathcal W}(B)$ is represented in terms of fibrations. 
In fact,  let $f, f' : B \to \aut{M}$ be 
the classifying maps for fibrations ${\mathcal F}$ and ${\mathcal F}'$, respectively. Since $\theta$ is bijection, without loss of generality, 
we can assume that $f$ and $f'$ are based maps. 
Let ${\mathcal F}_{f\ast f'}$ be the bullback of the universal $M$-fibration by
$f\ast f' := \nabla \circ f\vee f'\circ\Delta$. Then we see that in ${\mathcal E}M{\mathcal W}(B)$
$$
[{\mathcal F}]\ast [{\mathcal F}'] = [{\mathcal F}_{f\ast f'}]. 
$$

We define ${\mathcal M}_{(M, \omega)}^{ex}(B)$ to be the subset of ${\mathcal E}M{\mathcal W}(B)$  consisting of  
classes $[{\mathcal F}]$ such that the cohomology class $[\omega]$ is extendable in some representative of the form 
$M\to E \to B$ of $[{\mathcal F}]$ and hence in any representative with fibre $M$. 
By virtue of Theorem \ref{thm:kappa}, we establish the following result. 

\begin{prop}\label{prop:more_ex1}  Let $(M, \omega)$ be a nilpotent ${\mathbb T}^{2k}$-separable symplectic manifold and let 
$B$ be a simply-connected co-H-space. Then 
${\mathcal M}_{(M, \omega)}^{ex}(B)$ is closed under the product on ${\mathcal E}M{\mathcal W}(B)$ mentioned above. 
\end{prop}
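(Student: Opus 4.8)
The plan is to reduce the statement entirely to the criterion of Theorem \ref{thm:kappa} together with the additivity of cohomology under the co-$H$-product. Suppose $[{\mathcal F}]$ and $[{\mathcal F}']$ both lie in ${\mathcal M}_{(M, \omega)}^{ex}(B)$, and choose classifying maps $f, f' : B \to B\aut{M}$; by the remark preceding the proposition (bijectivity of $\theta$, which uses that $B\aut{M}$ is simply-connected) we may take both to be based. By Theorem \ref{thm:kappa} the membership of $[{\mathcal F}]$ and $[{\mathcal F}']$ is equivalent to the vanishing of the composites $H^*(f)\circ\kappa$ and $H^*(f')\circ\kappa$ from $H^1({\mathbb T}^{2k})$ to $H^2(B)$. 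Since $[{\mathcal F}]\ast[{\mathcal F}'] = [{\mathcal F}_{f\ast f'}]$ with $f\ast f' = \nabla\circ(f\vee f')\circ\Delta$, it suffices to show that $H^*(f\ast f')\circ\kappa$ is again trivial; applying Theorem \ref{thm:kappa} in the reverse direction then places $[{\mathcal F}]\ast[{\mathcal F}']$ in ${\mathcal M}_{(M, \omega)}^{ex}(B)$, as desired.

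The heart of the argument is the identity, valid in positive degrees,
\[ H^*(f\ast f') = H^*(f) + H^*(f'). \]
To establish it I would compute the cohomology of each map in the factorization $\nabla\circ(f\vee f')\circ\Delta$. On reduced cohomology the wedge splits as $\widetilde{H}^*(B\aut{M}\vee B\aut{M})\cong \widetilde{H}^*(B\aut{M})\oplus \widetilde{H}^*(B\aut{M})$ via the two inclusions, and the folding map $\nabla$, being the identity on each summand, induces the diagonal $y\mapsto(y,y)$. The map $f\vee f'$ then carries $(y,y)$ to $(H^*(f)(y), H^*(f')(y))$ in $\widetilde{H}^*(B)\oplus\widetilde{H}^*(B)$. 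Finally, because $B$ is a co-$H$-space, the comultiplication $\Delta$ induces addition $(a_1,a_2)\mapsto a_1 + a_2$: every positive-degree class of $\widetilde{H}^*(B\vee B)$ is uniquely $p_1^*a_1 + p_2^*a_2$, and the co-$H$ axioms $p_i\circ\Delta\simeq 1_B$ give $\Delta^*(p_i^*a_i)=a_i$. Composing the three maps yields the displayed identity for every $y$ of positive degree.

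Since the detective map $\kappa$ takes values in $H^2(B\aut{M})$, which sits in positive degree, the identity applies verbatim to give $H^*(f\ast f')\circ\kappa = H^*(f)\circ\kappa + H^*(f')\circ\kappa$. Both summands vanish by hypothesis, so the composite is trivial and the conclusion follows from Theorem \ref{thm:kappa}. I do not expect any genuinely hard step: the only point requiring care is the bookkeeping with the wedge-sum splitting and the verification that $\Delta^*$ is addition, which rests solely on the co-$H$-structure of $B$ and the simple-connectivity of $B\aut{M}$. In essence the proposition records that the extendability criterion of Theorem \ref{thm:kappa} is \emph{additive} in the classifying map, so that the vanishing locus of $H^*(-)\circ\kappa$ is closed under the co-$H$-product.
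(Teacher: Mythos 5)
Your proof is correct and follows essentially the same route as the paper: both reduce the statement to the additivity identity $H^*(f\ast f') = H^*(f) + H^*(f')$ and then apply Theorem \ref{thm:kappa} in both directions. The only (immaterial) difference is that you verify the identity directly in reduced cohomology via the wedge splitting and the co-H axioms, whereas the paper computes $H_*(f\ast f') = H_*(f) + H_*(f')$ in reduced homology from $\Delta_*(a)=(a,a)$ and dualizes.
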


Let $B$ be the double suspension of a space $B'$ which is not necessarily connected. 
As usual,  we consider $B=S^2\vee B'$ a co-H-space whose comultiplication is defined by the standard that of 
$S^2$. 
Then $[B, B\aut{M}]_*$ is an abelian group 
with respect to the product mentioned above and hence so is ${\mathcal E}M{\mathcal W}(B)$.  
Suppose that $(M, \omega)$ is a nilpotent ${\mathbb T}^{2k}$-separable symplectic manifold. 
Proposition \ref{prop:more_ex1} implies that 
${\mathcal M}_{(M, \omega)}^{ex}(B)$ is also abelian.   
Therefore the dimension of 
the subspace ${\mathcal M}_{(M, \omega)}^{ex}(B)\otimes \K$ of the vector space ${\mathcal E}M{\mathcal W}(B)\otimes \K$ 
is in our great interest. 

Let $W$ be the vector space  $QH^*(B\aut{M})$ of indecomposable elements of $H^*(B\aut{M})$. 
Since $B\aut{M}$  is simply-connected, 
it follows that $W$ contains $H^2(B\aut{M})$ and in particular the image of the detective map $\kappa$ of $M$. 
Suppose that $B$ is the sphere $S^2$. 
Theorem \ref{thm:kappa} enables us to determine explicitly 
the dimension of ${\mathcal M}_{(M, \omega)}^{ex}(S^2)\otimes \K$ with $\dim W^2$. 

\begin{prop}\label{prop:more_ex2}  Let $B$ be the double suspension of a finite CW complex and $M$ a nilpotent space. 
Suppose that $H^*(B\aut{M})$ is a free algebra or $B$ is the sphere $S^2$. Then 
$${\mathcal E}M{\mathcal W}(B)\otimes \K\ \cong  \text{\em Hom}_{GV}(W, H^*(B))$$ 
as a vector space, where $W = QH^*(B\aut{M})$ and 
$\text{\em Hom}_{GV}(W, H^*(B))$ denotes the vector space of linear maps 
from $W$ to $H^*(B)$ of degree zero. 
Assume further that $(M, \omega)$ is a nilpotent ${\mathbb T}^{2k}$-separable symplectic manifold. 
Then one has 
\begin{eqnarray*}
{\mathcal M}_{(M, \omega)}^{ex}(S^2)\otimes \K \cong  
\{f \in \text{\em Hom}_{\K}(W^2, \K) \mid {f_|}_{\text{\em Im} \kappa} : \text{\em Im} \kappa \to \K \ \text{is trivial} \}
\end{eqnarray*} 
as  a vector space. In particular, 
\begin{eqnarray*}
\dim{\mathcal M}_{(M, \omega)}^{ex}(S^2)\otimes \K =  \dim W^2 -2k .
\end{eqnarray*}
\end{prop}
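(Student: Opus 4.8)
The plan is to transport everything to the homotopy set $[B, B\aut{M}]$ via the isomorphism $\Psi$, to identify its rationalization with a space of linear maps, and finally to read off the extendability condition of Theorem~\ref{thm:kappa} as a single linear constraint. Since the product on $\mathcal{E}M\mathcal{W}(B)$ is defined through $\Psi$ and the co-$H$ structure of $B$, the bijection $\Psi$ is an isomorphism of abelian groups, so $\mathcal{E}M\mathcal{W}(B)\otimes\K\cong [B, B\aut{M}]\otimes\K$. Because $B$ is a finite-type double suspension and $B\aut{M}$ is nilpotent (indeed simply-connected) of finite type, rationalization gives $[B, B\aut{M}]\otimes\K\cong [B, (B\aut{M})_\K]$, and this is the object I would compute in each of the two allowed cases.

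First I would treat Part 1. If $H^*(B\aut{M})$ is a free graded-commutative algebra, then, free cohomology algebras being intrinsically formal, the minimal Sullivan model of $B\aut{M}$ is $(\wedge W, 0)$ with $W = QH^*(B\aut{M})$, so $(B\aut{M})_\K\simeq \prod_i K(\K, n_i)$ with one factor for each element of a homogeneous basis of $W$. Hence
$$[B, (B\aut{M})_\K]\cong \prod_i H^{n_i}(B)\cong \bigoplus_n \mathrm{Hom}_\K(W^n, H^n(B)) = \mathrm{Hom}_{GV}(W, H^*(B)),$$
which is the asserted isomorphism. For $B = S^2$ I would instead argue directly: $[S^2, B\aut{M}]\otimes\K = \pi_2(B\aut{M})\otimes\K$, and since $B\aut{M}$ is simply-connected its minimal model has $V^2 = H^2(B\aut{M}) = W^2$ in degree two (no decomposables or coboundaries occur there), so $\pi_2(B\aut{M})\otimes\K\cong (W^2)^*$. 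As $H^*(S^2)$ is concentrated in degrees $0$ and $2$, one has $\mathrm{Hom}_{GV}(W, H^*(S^2)) = \mathrm{Hom}_\K(W^2, \K) = (W^2)^*$, so the isomorphism holds in this case too.

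Next I would prove Part 2. Under the isomorphism $[S^2, B\aut{M}]\otimes\K\cong (W^2)^*$ the classifying map $f$ corresponds to the functional $H^*(f)|_{W^2}\colon W^2 = H^2(B\aut{M})\to H^2(S^2)=\K$, via the rational Hurewicz pairing $[f]\mapsto(\xi\mapsto\langle f^*\xi, [S^2]\rangle)$. Since the detective map $\kappa$ lands in $H^2(B\aut{M}) = W^2$, the composite $H^*(f)\circ\kappa$ is exactly the restriction of this functional to $\mathrm{Im}\,\kappa$. Theorem~\ref{thm:kappa} therefore says that $[\omega]$ is extendable in the fibration classified by $f$ precisely when $f$ annihilates $\mathrm{Im}\,\kappa$. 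As this is a $\K$-linear condition, passing to the rationalization identifies $\mathcal{M}_{(M,\omega)}^{ex}(S^2)\otimes\K$ with the annihilator $\{f\in\mathrm{Hom}_\K(W^2,\K)\mid f|_{\mathrm{Im}\,\kappa}=0\}$, which is the stated description.

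The dimension formula then reduces to $\dim\mathrm{Im}\,\kappa = 2k$, that is, to injectivity of $\kappa = \tau\circ ev^*\circ p_M^*$ on $H^1(\mathbb{T}^{2k})$, and this is the step I expect to be the main obstacle. The outer maps are easy: $p_M^*$ is an isomorphism onto $H^1(M)$ by the choice of $p_M$ and the separable model, whose degree-one part is spanned by the $t_{i_\lambda}$, while $\tau$ is injective because the universal fibration $\aut{M}\to E\aut{M}\to B\aut{M}$ has contractible total space, forcing $d_2 = \tau$ to be injective on $E_2^{0,1}=H^1(\aut{M})$. The real work is to show $ev^*\colon H^1(M)\to H^1(\aut{M})$ is injective. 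Dually this asks that $ev_*\colon \pi_1(\aut{M})\otimes\K\to \pi_1(M)\otimes\K$ be surjective, and I would establish this with the derivation model of $\aut{M}$: writing the separable minimal model as the tensor product $\bigotimes_i(\wedge(t_{i_1},t_{i_2}),0)\otimes(\wedge Z,d)$, the differential satisfies $dZ\subseteq\wedge Z$, so each degree-lowering derivation $\theta_{i\lambda}$ that is dual to $t_{i\lambda}$ and vanishes on $Z$ is a cycle in the derivation complex which is not a boundary, and $ev_*[\theta_{i\lambda}]$ is the corresponding dual basis element of $\pi_1(M)\otimes\K\cong(V^1)^*$. Thus the $\theta_{i\lambda}$ witness the surjectivity of $ev_*$, whence $\kappa$ is injective and $\dim\mathcal{M}_{(M,\omega)}^{ex}(S^2)\otimes\K = \dim W^2 - 2k$. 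The crux is precisely that the tensor-product (separability) form of the model keeps $d$ off the torus generators, which is what makes these derivation cycles non-trivial and detectable by the evaluation map.
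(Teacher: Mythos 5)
Your proposal is correct and proves all three assertions, but it departs from the paper's proof at two points, so let me compare. For the first isomorphism the paper never passes through Eilenberg--MacLane spaces: it uses formality of the suspension $B$ and the Sullivan--de Rham equivalence to identify $[B,(B\aut{M})_\K]$ with $\text{Hom}_{DGA}((\wedge\widetilde{W},d),(H^*(B),0))/\!\simeq$, and then freeness of $H^*(B\aut{M})$ together with \cite[Lemma 14.5]{F-H-T} (resp.\ degree reasons when $B=S^2$) to replace homotopy classes of DGA maps by restrictions to generators, landing in $\text{Hom}_{GV}(W,H^*(B))$. Your alternative---intrinsic formality of a free algebra, so that $(B\aut{M})_\K\simeq\prod_iK(\K,n_i)$ and $[B,(B\aut{M})_\K]\cong\prod_iH^{n_i}(B)$---is sound and even avoids invoking formality of $B$. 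Both arguments, however, rest on the comparison $[B,B\aut{M}]\otimes\K\cong[B,(B\aut{M})_\K]$, which you assert as standard but the paper actually proves, via the adjunction $[B,B\aut{M}]_*\cong\pi_2(map_*(B',B\aut{M}))$ and \cite[II Theorem 3.11]{H-M-R}; you should cite a statement of this kind rather than treat the step as automatic.

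The larger divergence is in the dimension formula. The paper obtains $\dim\text{Im}\,\kappa=2k$ simply by quoting Lemma \ref{lem:kappa}, already established in Section 3 from the Brown--Szczarba model of $\aut{M}$, the evaluation-map model (Proposition \ref{prop:ev-model}), and the Gugenheim--May theorem (Proposition \ref{prop:G-M}). You instead re-prove injectivity of $\kappa$ from scratch: injectivity of $\tau$ from the Serre spectral sequence of $\aut{M}\to E\aut{M}\to B\aut{M}$ with contractible total space (a cleaner, more elementary substitute for the Gugenheim--May step), and injectivity of $ev^*$ on $H^1(M)$ dually, by exhibiting derivation cycles $\theta_{i\lambda}$, which are cycles precisely because separability forces $d(Z)\subseteq\wedge Z$---this is the exact dual of the mechanism behind the paper's Lemma \ref{lem:Mu}, so your identification of where separability enters is right. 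The one step you must shore up is the appeal to ``the derivation model of $\aut{M}$'': the identifications $\pi_1(\aut{M})\otimes\K\cong H_1(\text{Der}(\wedge V,\wedge V;d))$ and $ev_*[\theta]=\epsilon\circ\theta|_{V^1}$ are classical for simply-connected $M$, but here $M$ is nilpotent and non-simply-connected, so you need a reference valid in that generality (e.g.\ \cite{B-M} or \cite{H-K-O}, or the paper's own dual argument in Section 4 assembled from \cite[12.7 Theorem]{B-G} and Proposition \ref{prop:ev-model}); simplest of all, you could just invoke Lemma \ref{lem:kappa}, which is available in the paper before this proposition.
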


\begin{cor}\label{cor:more_ex3} Let $({\mathbb T}^{2k}\times {\mathbb C}P(m), \omega)$ be the product of 
the symplectic manifolds with the standard symplectic forms. Then one has 
$$\dim{\mathcal M}_{({\mathbb T}^{2k}\times {\mathbb C}P(m), \omega)}^{ex}(S^2)\otimes \K = 
{}_{2k}C_2 + {}_{2k}C_4 + \cdots +{}_{2k}C_{2(\min\{m, k\})}. $$
\end{cor}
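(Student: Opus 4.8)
The plan is to compute $\dim{\mathcal M}_{(M, \omega)}^{ex}(S^2)\otimes \K$ for $M = {\mathbb T}^{2k}\times {\mathbb C}P(m)$ by applying the formula from Proposition \ref{prop:more_ex2}, namely $\dim{\mathcal M}_{(M, \omega)}^{ex}(S^2)\otimes \K = \dim W^2 - 2k$, where $W^2 = H^2(B\aut{M})$. Thus the entire computation reduces to determining $\dim H^2(B\aut{M})$ for this specific product manifold. First I would write down a minimal model for $M$: taking $(\wedge(t_{1_1}, t_{1_2}, \dots, t_{k_1}, t_{k_2}), 0)$ for the torus factor and $(\wedge(x, y), d)$ with $\deg x = 2$, $\deg y = 2m+1$, $dy = x^{m+1}$ for ${\mathbb C}P(m)$, all generators in the stated degrees. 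This exhibits $M$ as nilpotent and ${\mathbb T}^{2k}$-separable, so Proposition \ref{prop:more_ex2} applies.

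Next I would compute $H^2(B\aut{M})$ via rational homotopy theory. Since $B\aut{M}$ is simply-connected, $H^2(B\aut{M})\otimes\K \cong \operatorname{Hom}(\pi_2(B\aut{M}), \K) \cong \operatorname{Hom}(\pi_1(\aut{M}), \K)$, and the rational homotopy groups $\pi_*(\aut{M})\otimes\K$ are given by the derivations of the minimal model: there is the classical identification $\pi_n(\aut{M})\otimes\K \cong H_n(\operatorname{Der}(\wedge V, d))$, the homology of the complex of degree-lowering derivations of $(\wedge V, d)$ with the differential $[d, -]$. I would therefore enumerate the derivations $\varphi$ of degree $1$ (those raising homotopy degree by $1$, i.e. lowering internal degree by $1$) and compute the relevant homology group. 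A degree-one derivation is determined by where it sends each generator, landing in the component of $\wedge V$ one degree lower; I would list these, impose the cocycle condition $[d,\varphi]=0$, and quotient by the coboundaries $[d,\psi]$ coming from degree-two derivations.

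The main obstacle will be carefully accounting for the derivations that involve the odd generator $y$ and the relation $dy = x^{m+1}$, since the differential couples the ${\mathbb C}P(m)$ factor to itself and, through products like $t_{i_a}t_{j_b}x^{\ell}$, potentially to the torus generators. Concretely, I expect the derivations sending some $t_{i_a}$ or $x$ to a degree-appropriate product of torus generators (and powers of $x$) to contribute, and the count of the surviving classes in degree one should organize itself by the number of torus variables involved: products $t_{i_1}\cdots t_{i_{2j}}$ of an even number $2j$ of torus generators times $x^{m-j}$ can appear in $dy$-type relations only when $m-j \geq 0$, which caps $j$ at $\min\{m,k\}$. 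This is precisely what produces the binomial terms ${}_{2k}C_{2j}$ for $1 \leq j \leq \min\{m,k\}$.

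Finally I would assemble the count. Having determined that $\dim W^2 = \dim H^2(B\aut{M}) = 2k + \sum_{j=1}^{\min\{m,k\}} {}_{2k}C_{2j}$ — where the summand $2k$ comes from the "obvious" derivations detecting the torus directions (the image of $\kappa$ together with its complementary linear part) and the binomial sum comes from the coupling derivations described above — subtracting $2k$ as dictated by Proposition \ref{prop:more_ex2} yields
\begin{eqnarray*}
\dim{\mathcal M}_{({\mathbb T}^{2k}\times {\mathbb C}P(m), \omega)}^{ex}(S^2)\otimes \K = {}_{2k}C_2 + {}_{2k}C_4 + \cdots + {}_{2k}C_{2(\min\{m, k\})},
\end{eqnarray*}
as claimed. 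The delicate point throughout is verifying that no derivation coboundaries cancel the coupling classes and that the $2k$ subtracted by the formula exactly matches the dimension of $\operatorname{Im}\kappa$ together with the split-off torus contribution, so that the remaining classes are counted without overlap.
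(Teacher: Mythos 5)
Your overall strategy is the same as the paper's: reduce the corollary to the dimension formula $\dim{\mathcal M}_{(M,\omega)}^{ex}(S^2)\otimes\K=\dim W^2-2k$ of Proposition \ref{prop:more_ex2}, so that everything hinges on computing $\dim H^2(B\aut{M})$ for $M={\mathbb T}^{2k}\times{\mathbb C}P(m)$. Where you genuinely diverge is in how that dimension is computed. The paper reads $H^1(\aut{M})$ off from the Brown--Szczarba model $E/M_u$ assembled in Example \ref{ex:more_ex} (with Lemma \ref{lem:Mu} supplying the vanishing of the differentials on the coupling classes), and then uses $H^1(\aut{M})\cong H^2(B\aut{M})$. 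You instead pass through $H^2(B\aut{M})\cong\operatorname{Hom}(\pi_2(B\aut{M}),\K)\cong\operatorname{Hom}(\pi_1(\aut{M})\otimes\K,\K)$ and Sullivan's identification $\pi_*(\aut{M})\otimes\K\cong H_*(\operatorname{Der}(\wedge V,d))$. This route is legitimate and, carried out correctly, gives the same count; it is classical and self-contained, at the price of invoking the derivation-complex theorem, which the paper never needs because the function-space model is already set up for the proofs of Lemma \ref{lem:kappa} and Propositions \ref{prop:A} and \ref{prop:B}.

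However, your concrete description of which derivations contribute is wrong, and you should correct it before trusting the enumeration. A derivation of homotopy degree $1$ lowers word degree by $1$, so $t_{i_a}$ (degree $1$) can only be sent to scalars, $x$ (degree $2$) only into the span of the $t_j$'s, and the degree-$2m$ monomials $x^{m-j}t_{i_1}\cdots t_{i_{2j}}$ can only occur as values on $y$ --- not on ``$t_{i_a}$ or $x$'' as you assert. Executing your own plan gives: the cocycle condition forces $\varphi(x)=0$, since $[d,\varphi](y)=\pm(m+1)x^m\varphi(x)$; the coboundaries $[d,\psi]$, $\psi$ of degree $2$, sweep out exactly the line spanned by $y\mapsto x^m$ (as $\psi(x)\in\K$ and $d\psi(y)=0$); and the surviving classes are the $2k$ derivations $t_i\mapsto 1$ (these correspond precisely to $\operatorname{Im}\kappa$ by Lemma \ref{lem:kappa} --- there is no ``complementary linear part'') together with $y\mapsto x^{m-j}t_{i_1}\cdots t_{i_{2j}}$ for $1\le j\le\min\{m,k\}$ and distinct indices. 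Hence $\dim W^2=2k+\sum_{j=1}^{\min\{m,k\}}{}_{2k}C_{2j}$, and Proposition \ref{prop:more_ex2} finishes the proof. This also resolves the worry at the end of your proposal: the unique coboundary is the $j=0$ class $y\mapsto x^m$, so no coupling class is cancelled, and no separate matching argument for the subtracted $2k$ is needed since Proposition \ref{prop:more_ex2} already encodes $\dim\operatorname{Im}\kappa=2k$.
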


\begin{ex} 
\label{ex:Rationalized fibration}
Let us consider the fibration of the form ${\mathcal F} : ({\mathbb T}^{2k}, \omega) \to P \to S^2$, 
where $\omega$ is the standard symplectic form on ${\mathbb T}^{2k}$. 
By the Leray-Serre spectral sequence argument, 
one easily deduces that $[\omega]$ is extendable in ${\mathcal F}$ if and only if 
the rationalized fibration ${\mathcal F}_{(\K)}$ of ${\mathcal F}$ is trivial; see Appendix 
for the definition of the rationalized fibration.  

On the other hand, even up to homotopy equivalence, 
there exist infinite many distinct rationalized fibrations of appropriate fibrations over $S^2$ with fibre 
$({\mathbb T}^{2k}\times {\mathbb C}P(m), \omega)$ 
in which the symplectic class $[\omega]$ is extendable. Here  $\omega$ 
is the standard symplectic form.  To see this, we consider
the natural map 
$$
l_* : {\mathcal E}N{\mathcal W}(B) \to {\mathcal E}N{\mathcal W}(B)\otimes \K
$$
defined by $l_*(\alpha) = \alpha \otimes 1$ for $\alpha \in {\mathcal E}N{\mathcal W}(B)$, 
where $B$ is a double suspension space and $N$ is a nilpotent space. 

\begin{claim}\label{claim:eq} Let $[{\mathcal F}]$ and $[{\mathcal F}']$ be homotopy equivalence classes in ${\mathcal E}N{\mathcal W}(B)$. Then  
$l_*([{\mathcal F}]) =\l_*([{\mathcal F}'])$ if and only if $[{\mathcal F}_{(\K)}]=[{\mathcal F}'_{(\K)}]$ in 
 ${\mathcal E}N_\K{\mathcal W}(B_\K)$. 
\end{claim}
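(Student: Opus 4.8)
The plan is to prove Claim \ref{claim:eq} by reducing the statement about rationalized fibrations to a statement about the localization of the homotopy sets that classify them, using the classifying-space isomorphism $\Psi$ together with the rationalization machinery for nilpotent spaces. First I would fix the identifications: by the discussion preceding Proposition \ref{prop:more_ex1}, for a double suspension $B$ the set ${\mathcal E}N{\mathcal W}(B)$ is an abelian group isomorphic via $\Psi$ to $[B, B\aut{N}]_* \cong [B, B\aut{N}]$, and analogously ${\mathcal E}N_\K{\mathcal W}(B_\K) \cong [B_\K, B\aut{N_\K}]_*$. The key point to record is that passing to the rationalized fibration ${\mathcal F}_{(\K)}$ corresponds, under $\Psi$, to composing the classifying map $f : B \to B\aut{N}$ with the rationalization map on classifying spaces and precomposing with $B \to B_\K$; this is where I would invoke the Appendix description of the rationalized fibration and the standard fact that $B\aut{N_\K}$ is a rationalization of $B\aut{N}$ for a nilpotent space $N$ of finite type. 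Thus the map $[{\mathcal F}] \mapsto [{\mathcal F}_{(\K)}]$ is identified with the localization homomorphism $[B, B\aut{N}] \to [B_\K, (B\aut{N})_\K]$.

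Next I would translate the left-hand side. Since $B$ is a double suspension, $[B, B\aut{N}]_*$ is a finitely generated abelian group, so ${\mathcal E}N{\mathcal W}(B)\otimes \K$ is simply its rationalization as an abelian group, and the map $l_*$ is the canonical map $G \to G\otimes \K$ sending $\alpha \mapsto \alpha\otimes 1$. Hence $l_*([{\mathcal F}]) = l_*([{\mathcal F}'])$ holds if and only if $[{\mathcal F}] - [{\mathcal F}']$ is a torsion element of the group ${\mathcal E}N{\mathcal W}(B)$. The whole claim therefore reduces to the assertion that, under the above identifications, an element of $[B, B\aut{N}]$ is torsion precisely when its image under the rationalization map $[B, B\aut{N}] \to [B_\K, (B\aut{N})_\K]$ is trivial. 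This is a purely homotopy-theoretic fact about mapping into a rationalization, and I would prove it by combining two ingredients: that $[B_\K, (B\aut{N})_\K] \cong [B, B\aut{N}]\otimes \K$ because $B$ is a finite-type co-H-space and the target is simply-connected (so the group structure is linear over $\K$), and that the comparison map is exactly the canonical $\alpha \mapsto \alpha\otimes 1$ whose kernel is the torsion subgroup.

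Assembling the pieces, $[{\mathcal F}_{(\K)}] = [{\mathcal F}'_{(\K)}]$ in ${\mathcal E}N_\K{\mathcal W}(B_\K)$ is equivalent to $[{\mathcal F}]\otimes 1 = [{\mathcal F}']\otimes 1$ in $[B, B\aut{N}]\otimes \K$, which by the previous paragraph is equivalent to $l_*([{\mathcal F}]) = l_*([{\mathcal F}'])$. Care is needed to check that the two \emph{a priori} different rationalizations — tensoring the classifying group with $\K$ versus classifying the rationalized fibration over $B_\K$ — agree on the nose under $\Psi$; establishing the naturality of this identification is the main obstacle. Concretely, I expect the difficulty to lie in verifying that the rationalization of the universal $N$-fibration $N \to N_{\aut{N}} \to B\aut{N}$ pulls back, under the rationalized classifying map, to the rationalized fibration ${\mathcal F}_{(\K)}$ as defined in the Appendix, rather than to some Postnikov-theoretic variant; once this compatibility is pinned down the rest is formal diagram-chasing with the localization map on abelian groups.
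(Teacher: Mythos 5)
Your proposal is correct and takes essentially the same route as the paper: the paper likewise identifies ${\mathcal E}N_\K{\mathcal W}(B_\K)\cong [B, B\aut{N_\K}]$ with $[B, B\aut{N}]\otimes\K$ via the rationalization $\lambda_\sharp : B\aut{N}\to B\aut{N_\K}$, and the compatibility you flag as the main obstacle is exactly its formula (5.1), $\lambda_\sharp\circ f \simeq f'\circ \eta(m)$, supplied in advance by Remark \ref{rem:generalization} via May's fiberwise localization and Llerena's universal property of fibrewise localization. Your torsion-kernel formulation of the final step is an equivalent repackaging of the paper's chase through the bijections $(\lambda_\sharp)_*$ and $\eta(m)^*$.
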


We shall prove Claim \ref{claim:eq} in Appendix. Corollary  \ref{cor:more_ex3}  implies that there is a non-trivial element 
$u$ in  the subspace  ${\mathcal M}_{(M, \omega)}^{ex}(S^2)\otimes \K$ of ${\mathcal E}M{\mathcal W}(S^2)\otimes \K$.  
Then we obtain a class $[{\mathcal F}] \in {\mathcal E}M{\mathcal W}(S^2)$ such that 
$l_*([{\mathcal F}])= mu$ for some non-zero integer $m$. Thus Claim \ref{claim:eq} yields that 
$[{\mathcal F}_{(\K)}] \neq [(n{\mathcal F})_{(\K)}]$ for any integer $n \neq 1$ 
because $l_*([{\mathcal F}])\neq \l_*(n[{\mathcal F}])$. 
\end{ex}

\begin{rem}
Suppose that $M$ is a simply-connected homogeneous space of the form $G/H$ with 
$\text{rank} \ G = \text{rank} \ H$. Then the result \cite[Theorem 1.1]{Ku2} serves to show that 
$H^*(B\aut{M})$ is a polynomial algebra. 
The proof of Theorem \ref{thm:nil-ham} below implies that $H^*(B\aut{N})$ is also a polynomial algebra 
if $N$ is a nilmanifold. We do not know a characterization for $H^*(B\aut{M})$ to be free when $M$ is a 
nilpotent ${\mathbb T}^{2k}$-separable symplectic manifold. 
\end{rem}

\begin{rem} Suppose that $B$ is $2$-connected. Then 
Remark \ref{rem:coupling} yields that for any fibration $(M, \omega) \to E \to B$, the cohomology class $[\omega]$ 
is extendable. Therefore, we have ${\mathcal M}_{(M, \omega)}^{ex}(B)={\mathcal E}M{\mathcal W}(B)$. 
\end{rem}

Theorem \ref{thm:kappa} moreover deduces important results. 
Combining the theorem with Theorem \ref{thm:M-S} we have 

\begin{cor}
\label{cor:symp}
Let ${\mathcal F}$ be a compact symplectic fibration as in Theorem
 \ref{thm:kappa}. 
Suppose that $H^*(g)\circ \kappa : H^1({\mathbb T}^{2k}) \to H^2(B)$ 
is trivial, where  $g : B \to B\aut{M}$ is the classifying map for the
 underlying fibration of  ${\mathcal F}$. 
Then there exists a compatible symplectic form on $P$; that is, the restriction of the 
form on $P$ to the fibre coincides with the given symplectic form on the fibre.   
\end{cor}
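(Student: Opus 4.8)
The plan is to read this corollary as the concatenation of the two main inputs already at our disposal: Theorem \ref{thm:kappa} produces an honest cohomological extension of the symplectic class, and Thurston's Theorem \ref{thm:M-S} upgrades such an extension to a genuine symplectic form on the total space. Since $\mathcal{F}$ is taken ``as in Theorem \ref{thm:kappa}'', the base $B$ is simply-connected and the fibre is a nilpotent $\mathbb{T}^{2k}$-separable symplectic manifold; being a compact symplectic fibration in the sense of Theorem \ref{thm:M-S}, the base carries a symplectic form $\beta$, so both sets of hypotheses are in force.

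First I would identify the classifying map $g : B \to B\aut{M}$ of the underlying $M\mathcal{W}$-fibration of $\mathcal{F}$ with the map $f$ appearing in Theorem \ref{thm:kappa}; these are the same datum, since forgetting the symplectic structure is exactly the passage from $\mathcal{F}$ to its underlying fibration. Then the hypothesis that $H^*(g)\circ \kappa : H^1(\mathbb{T}^{2k}) \to H^2(B)$ is trivial is precisely the condition appearing in Theorem \ref{thm:kappa}. Applying that theorem, I conclude that the symplectic class $[\omega] \in H^2(M; \mathbb{R})$ extends to a cohomology class $a \in H^2(P; \mathbb{R})$; that is, writing $i_b : P_b \to P$ for the inclusion of the fibre over $b \in B$ and identifying $(P_b, \omega_b)$ with $(M, \omega)$, we have $i_b^* a = [\omega_b]$.

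Next I would feed this class $a$ into Thurston's Theorem \ref{thm:M-S}. The hypotheses of that theorem are now all met: $\pi : P \to B$ is a compact symplectic fibration with symplectic fibre $(M, \omega)$ and connected symplectic base $(B, \beta)$, and we have produced a class $a \in H^2(P; \mathbb{R})$ with $i_b^* a = [\omega_b]$. The theorem therefore yields, for every sufficiently large real $K > 0$, a symplectic form $\omega_K$ on $P$ satisfying $i_b^* \omega_K = \omega_b$ and $[\omega_K] = a + K[\pi^*\beta]$. The first of these identities says exactly that the restriction of $\omega_K$ to the fibre coincides with the given symplectic form there, so $\omega_K$ is the desired compatible symplectic form and the corollary follows.

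The step I expect to be the most delicate is not any single computation but the bookkeeping that lets the two theorems dock together without a gap: one must check that the coefficient ring is $\mathbb{R}$ throughout (so that the class $a$ furnished by Theorem \ref{thm:kappa} is literally the input required by Theorem \ref{thm:M-S}), that the extended class restricts under the fibre inclusion to the fibre class $[\omega_b]$ rather than merely to $[\omega]$ up to the identification $P_b \cong M$, and that the symplectic structure on the base demanded by Thurston is part of the ``compact symplectic fibration'' data. Once these identifications are made, no further work is needed; the content of the corollary is entirely carried by the two cited theorems.
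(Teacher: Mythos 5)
Your proposal is correct and matches the paper's own treatment exactly: the paper presents this corollary as an immediate consequence of "combining [Theorem \ref{thm:kappa}] with Theorem \ref{thm:M-S}", which is precisely your two-step argument of extending $[\omega]$ to a class $a \in H^2(P;\mathbb{R})$ via Theorem \ref{thm:kappa} and then invoking Thurston's theorem to produce the compatible form $\omega_K$. The bookkeeping points you flag (identifying $g$ with $f$, real coefficients, the fibre restriction) are handled implicitly in the paper, and your explicit treatment of them is sound.
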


In view of the universal covering over $B\text{Symp}(M, \omega)$, we see that the classifying map from   
$B$ to $B\text{Symp}(M, \omega)$ of a symplectic bundle factors through 
$B\text{Symp}_1(M, \omega)$ if the base space 
is simply-connected. By virtue of  Theorem \ref{thm:kappa} and Theorem \ref{thm:L-M},  
we have 

\begin{cor}
\label{cor:reduction}
Let ${\mathcal F}: 
(M, \omega)  \to P \to B$ 
be a symplectic bundle over a simply-connected base and 
$f : B \to B\text{\em Symp}_1(M, \omega)$ the 
the classifying map for the principal bundle 
associated to ${\mathcal F}$. Suppose that 
$(M, \omega)$ is nilpotent and ${\mathbb T}^{2k}$-separable. 
Then the bundle ${\mathcal F}$ is Hamiltonian  
if and only if the induced map 
$H^*((Bj)\circ f)\circ \kappa : H^1({\mathbb T}^{2k}) \to H^2(B)$ is
 trivial, where $j : \text{\em Symp}_1(M, \omega) \to \aut{M}$ 
is the inclusion.    
\end{cor}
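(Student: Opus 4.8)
The plan is to obtain Corollary \ref{cor:reduction} by feeding the Lalonde--McDuff characterization into our main Theorem \ref{thm:kappa}, the essential link being the identification of the classifying map for the underlying $M{\mathcal W}$-fibration with the composite $(Bj)\circ f$. First I would invoke Theorem \ref{thm:L-M}: since $B$ is simply-connected, the symplectic bundle ${\mathcal F}$ is Hamiltonian if and only if the symplectic class $[\omega] \in H^2(M; {\mathbb R})$ extends to a cohomology class of $P$. This converts the reduction-of-structure-group problem into precisely the extension problem that Theorem \ref{thm:kappa} resolves, so it remains only to translate the hypothesis of that theorem into the data at hand.

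The heart of the argument is to recognize the correct classifying map. By hypothesis the principal bundle associated to ${\mathcal F}$ is classified by $f : B \to B\text{Symp}_1(M, \omega)$, obtained from the lift through the universal cover $B\text{Symp}_1(M, \omega) \to B\text{Symp}(M, \omega)$ available because $B$ is simply-connected. On the other hand, if we forget the symplectic structure and retain only the fibre homotopy type, ${\mathcal F}$ becomes an $M{\mathcal W}$-fibration classified, via the isomorphism $\Psi$ of \cite[Theorem 9.2]{May1}, by some $g : B \to B\aut{M}$. I would verify, using naturality of the classifying-space construction with respect to the monoid homomorphism $j : \text{Symp}_1(M, \omega) \to \aut{M}$, that $g = (Bj)\circ f$; concretely, applying $Bj$ enlarges the structure monoid from symplectomorphisms isotopic to the identity to self-homotopy equivalences in the identity component, which is exactly the forgetful passage from the symplectic bundle to its underlying $M{\mathcal W}$-fibration. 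Thus $(Bj)\circ f$ is the classifying map for the underlying fibration of ${\mathcal F}$.

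With this identification in place, Theorem \ref{thm:kappa} applies verbatim: since $(M, \omega)$ is nilpotent and ${\mathbb T}^{2k}$-separable and $g = (Bj)\circ f$ classifies the underlying fibration, the class $[\omega]$ extends to a cohomology class of $P$ if and only if $H^*((Bj)\circ f)\circ \kappa : H^1({\mathbb T}^{2k}) \to H^2(B)$ is trivial. Composing this equivalence with the one furnished by Theorem \ref{thm:L-M} yields the stated characterization of Hamiltonian bundles.

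The step I expect to be the main obstacle is the naturality verification $g = (Bj)\circ f$. One must check that May's classification of $M{\mathcal W}$-fibrations through $[B, B\aut{M}]$ is compatible with the bundle-theoretic classification through $B\text{Symp}_1(M, \omega)$, and that the inclusion $j$ induces on classifying spaces precisely the map transporting one classification to the other. This is a matter of chasing through the relevant universal fibrations rather than a deep difficulty, but it is the precise point where the geometric input (symplectic bundles and their associated principal bundles) meets the homotopy-theoretic machinery of self-equivalence monoids underlying Theorem \ref{thm:kappa}.
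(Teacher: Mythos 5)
Your proposal is correct and follows the same route as the paper: the paper obtains the corollary precisely by combining Theorem \ref{thm:L-M} (Hamiltonian $\Leftrightarrow$ $[\omega]$ extendable, over a simply-connected base) with Theorem \ref{thm:kappa}, after noting that the classifying map factors through $B\text{Symp}_1(M,\omega)$ and that the underlying $M{\mathcal W}$-fibration is classified by $(Bj)\circ f$. The naturality point you flag as the main obstacle is exactly what the paper disposes of by appeal to May's classification theory (cf.\ \cite[Corollary 8.4]{May1}, cited in the paper for the analogous identification with $\text{Ham}(M,\omega)$).
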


%Observe that the composite $Bj\circ f$ is the classifying map of
%${\mathcal F}$ considered an $M$-fibration. 

It is important to remark some results concerning Theorem \ref{thm:kappa} and 
Corollary \ref{cor:symp}. Geiges \cite{Ge} investigated symplectic
structures of the total spaces of 
${\mathbb T}^2$-bundles over ${\mathbb T}^2$. Compatible symplectic
structures of such bundles were considered by Kedra \cite{Ke}. 
It turns out that the total spaces of all ${\mathbb T}^2$-bundles 
over ${\mathbb T}^2$ support symplectic forms; 
see \cite[Theorems 1 and 2]{Ge} and \cite[Theorem 3.3, Remark 3.4]{Ke} for more details. 

Kahn deduced a necessary and sufficient condition for 
a symplectic torus bundle  ${\mathbb T}^2 \to E \to B$ to admit a
compatible symplectic structure provided $B$ is a surface; see
\cite[Theorem 1.1]{Ka}. Walczak proved that the total
space of torus bundle over a surface admits a symplectic structure which
is not necessarily compatible with that of the fibre if and only if 
the symplectic class of ${\mathbb T}^2$ extends to 
a cohomology class of $E$; see \cite[Theorem 4.9]{W}. 

The argument in \cite[Section 4]{Ge} implies that 
a symplectic ${\mathbb T}^2$-bundles $\xi$ over $S^2$ 
admits a compatible symplectic structure if and only if $\xi$ is
trivial; see also \cite[Proposition 1.3]{Ka} and \cite[Remark 4.13]{W}. 
We recover the fact by applying Theorem \ref{thm:kappa}; 
see Remark \ref{rem:S^2}. 
% and also Exmaple \ref{ex:principal-bd}. 

We focus on the case where the fibre is a nilmanifold. 
Let $M \to M_{\text{Ham}(M, \omega)} \to B\text{Ham}(M, \omega)$ be 
the universal Hamiltonian $M$-bundle. 
This bundle is regarded as a fibration whose classifying map is the map 
$Bj : B\text{Ham}(M, \omega) \to B\aut{M}$ induced by the inclusion 
$j : \text{Ham}(M, \omega) \to \aut{M}$; 
see \cite[Corollary 8.4]{May1}. Suppose that $(M, \omega)$ 
is the $2k$-dimensional torus with the standard symplectic form. 
Then Corollary \ref{cor:reduction} yields that the induced map 
$(Bj)^* : H^2(B\aut{M}) \to H^2(B\text{Ham}(M, \omega))$ is trivial since 
$H^2(B\aut{M})$ coincides with the image of the detective map $\kappa$; 
see Lemma \ref{lem:kappa}. This result is
generalized to the case of nilmanifolds, which are no longer 
${\mathbb T}^{2k}$-separable in general. More precisely, we establish  

\begin{thm}
\label{thm:Ham}
Let $(N, \omega)$ be a nilmanifold 
with a symplectic structure $\omega$. Then the induced map 
$(Bj)^* : H^*(B\aut{N}) \to H^*(B\text{\rm Ham}(N, \omega))$ is trivial.   
\end{thm}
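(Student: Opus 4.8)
The plan is to reduce the statement to degree two by pinning down the rational homotopy type of $\aut{N}$, then to realize every degree-two class of $B\aut{N}$ as a transgression coming from $H^1(N)$, and finally to kill these classes using the Hamiltonian condition. For the reduction, recall that a nilmanifold is aspherical, $N\simeq K(\Gamma,1)$ with $\Gamma=\pi_1(N)$ finitely generated, torsion-free and nilpotent; equivalently its minimal model $(\wedge V,d)$ has $V=V^1$ concentrated in degree one. A derivation of $\wedge V$ lowering degree by at least two must vanish, so the derivation complex computing $\pi_*(\aut{N})\otimes\K$ sits in degrees $0$ and $1$ only. Hence $\pi_i(\aut{N})\otimes\K=0$ for $i\ge 2$, and $\aut{N}$ is rationally a $K(\pi,1)$ (indeed $\aut{N}\simeq K(Z(\Gamma),1)$). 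Therefore $B\aut{N}$ is rationally a product of $K(\K,2)$'s and $H^*(B\aut{N})$ is a polynomial algebra generated in degree two. Since $(Bj)^*$ is a ring homomorphism, it is enough to show that it vanishes on $H^2(B\aut{N})$.

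Next I would identify the degree-two classes as transgressions. Let $\tau$ be the transgression of the universal fibration $\aut{N}\to E\aut{N}\to B\aut{N}$. Because $\aut{N}$ is rationally aspherical, $\tau:H^1(\aut{N})\to H^2(B\aut{N})$ is an isomorphism. The evaluation $ev:\aut{N}\to N$ induces on $\pi_1$ the inclusion of the centre $Z(\Gamma)\hookrightarrow\Gamma$ (the first Gottlieb group of an aspherical space is its centre), so $ev^*:H^1(N)\to H^1(\aut{N})$ is surjective. Applying naturality of transgression to the map of fibrations over $B\aut{N}$ induced by $ev$, namely from the universal fibration to the universal $N$-fibration $N\to N_{\aut{N}}\to B\aut{N}$, yields $\tau\circ ev^*=d_2$, the Leray--Serre transgression of the universal $N$-fibration on $H^1(N)$. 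In particular every class of $H^2(B\aut{N})$ has the form $d_2(a)$ for some $a\in H^1(N)$.

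The final step, and the heart of the argument, is to show that $(Bj)^*\bigl(d_2(a)\bigr)=0$. Pulling the universal $N$-fibration back along $Bj$ produces the universal Hamiltonian bundle $N\to N_{\mathrm{Ham}(N,\omega)}\to B\mathrm{Ham}(N,\omega)$, and by naturality $(Bj)^*(d_2 a)$ is exactly the transgression of $a$ in this bundle. The key point is that this transgression vanishes: a Hamiltonian bundle is cohomologically split (Lalonde--McDuff \cite{L-M}), so the fibre inclusion is injective in cohomology and the Leray--Serre differentials vanish on $H^*(N)$; equivalently, the $H^1$-transgression is the flux of the bundle, which is zero because $\mathrm{Ham}(N,\omega)$ is by definition the kernel of the flux homomorphism. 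Hence $(Bj)^*$ annihilates $H^2(B\aut{N})$, and since $H^*(B\aut{N})$ is generated in degree two it is trivial in all positive degrees.

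The main obstacle is precisely the vanishing in the last step, that is, the assertion that the orbit loop $t\mapsto\phi_t(x_0)$ of a Hamiltonian loop is rationally null-homologous in $N$. In the $\mathbb T^{2k}$-separable case this is exactly Corollary \ref{cor:reduction} applied to the tautologically Hamiltonian universal bundle, together with the identity $\mathrm{Im}\,\kappa=H^2(B\aut{N})$; but a general symplectic nilmanifold is not $\mathbb T^{2k}$-separable, so the detective map is unavailable and one must instead appeal to the $c$-splitting (vanishing flux) of Hamiltonian bundles. A more self-contained route would present $N$ as an iterated principal-torus (central) extension and propagate the vanishing of $d_2$ on the central degree-one classes up the tower; however, making the central torus interact with the symplectic form is exactly where $\mathbb T^{2k}$-separability is needed, so the Lalonde--McDuff input appears to be essential.
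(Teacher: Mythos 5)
Your reduction to degree two and the identification of $H^2(B\aut{N})$ with $H^1(\aut{N})$ via the transgression $\tau$ are both correct (the paper establishes the same polynomial structure on $H^*(B\aut{N})$), but the step ``$ev_*$ is the inclusion of the centre on $\pi_1$, \emph{so} $ev^*:H^1(N)\to H^1(\aut{N})$ is surjective'' is false, and the proof collapses there. With $\K$-coefficients, $ev^*$ is the restriction map $\mathrm{Hom}(\Gamma,\K)\to\mathrm{Hom}(Z(\Gamma),\K)$ along the centre inclusion, where $\Gamma=\pi_1(N)$; a homomorphism $Z(\Gamma)\to\K$ extends over $\Gamma$ only if it annihilates $Z(\Gamma)\cap[\Gamma,\Gamma]$, and for every non-abelian nilpotent group this intersection is nontrivial (and infinite, since $\Gamma$ is torsion free): the last nonzero term of the lower central series is central and consists of commutators. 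So $ev^*$ is surjective exactly when $N$ is a torus. Concretely, for the Kodaira--Thurston manifold, $\Gamma=H_3(\mathbb{Z})\times\mathbb{Z}$, one has $\dim H^1(\aut{N})=\dim Z(\Gamma)\otimes\K=2$, while the image of $ev^*$ is the single line dual to the $\mathbb{Z}$-factor: the class dual to the central commutator $z=[x,y]$ does not extend to $\Gamma$. Hence your assertion that every class of $H^2(B\aut{N})$ has the form $d_2(a)$ with $a\in H^1(N)$ fails, and the flux/c-splitting step never reaches the generator $\tau(z^\vee)$. Equivalently: since $H^2(B\aut{N})^\vee\cong\pi_2(B\aut{N})\otimes\K\cong Z(\Gamma)\otimes\K$ and $H^2(B\mathrm{Ham})^\vee\cong\pi_1(\mathrm{Ham}(N,\omega))\otimes\K$, the theorem amounts to the statement that orbit loops of Hamiltonian loops are rationally \emph{null-homotopic}, whereas your last step (vanishing of flux, c-splitting over spheres) only yields that they are rationally \emph{null-homologous}; on a nilmanifold these differ precisely by $Z(\Gamma)\cap[\Gamma,\Gamma]$.

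This is exactly why the paper argues on homotopy groups rather than on $H^1$: the map that behaves well for non-abelian $\Gamma$ is $ev_*:\pi_1(\aut{N})_\K\to\pi_1(N)_\K$, which is \emph{injective} (the centre injects into $\Gamma$), dual to your surjectivity claim but true in general. The paper's route (Theorem \ref{thm:nil-ham}) is: the universal Hamiltonian bundle over the simply-connected space $B\mathrm{Ham}(N,\omega)$ has extendable $[\omega]$ by Theorem \ref{thm:L-M} (the only symplectic input); by Stepie\'n's theorem \cite{St} the extendability forces the rationalized fibration to be trivial, so the connecting homomorphism $\partial_\K:\pi_2(B\mathrm{Ham}(N,\omega))_\K\to\pi_1(N)_\K$ vanishes; Gottlieb's diagram \cite{G} gives $\partial=ev_*\circ\partial'\circ(Bj)_*$ with $\partial'$ an isomorphism and $ev_*$ rationally injective, whence $(Bj)_*=0$ on $\pi_2\otimes\K\cong H_2(\,\cdot\,;\K)$, and polynomial generation of $H^*(B\aut{N})$ in degree two finishes the proof. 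To salvage your approach you would need precisely the homotopical (not homological) triviality of $\pi_1(\mathrm{Ham}(N,\omega))\to\pi_1(N)$ after tensoring with $\K$, and that is what the Stepie\'n input supplies; the c-splitting results you invoke do not.
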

  
Unfortunately, we capture no information on 
$H^*(B\text{\rm Ham}(N, \omega))$ via $H^*(B\aut{N})$ while 
a non-trivial element in $H^*(B\text{\rm Symp}(N, \omega))$ comes from 
an appropriate element in $H^*(B\aut{N})$ 
in some case; see  Example \ref{ex:torus} for example. 
Theorem \ref{thm:Ham} is deduced from Theorem \ref{thm:L-M} and the following
theorem.

\begin{thm}
\label{thm:nil-ham}
Let $(N, \omega)$ be a nilmanifold with a symplectic structure $\omega$
 and  ${\mathcal F} : 
(N, \omega)  \to P \to  B$ a fibration over a simply-connected space $B$ 
with fibre $(N, \omega)$. Let  $f : B \to B\aut{N}$ be the classifying map for 
${\mathcal F}$. 
Then the class $[\omega] \in H^2(N; {\mathbb R})$ extends to 
a cohomology class of $P$ if and only if the induced map 
$H^*(f) : H^*(B\aut{N}) \to H^*(B)$ is trivial.  
\end{thm}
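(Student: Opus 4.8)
My strategy is to reduce Theorem~\ref{thm:nil-ham} to the special case already handled by Theorem~\ref{thm:kappa}. The obstruction is that a general nilmanifold $N$ need not be ${\mathbb T}^{2k}$-separable, so I cannot invoke Theorem~\ref{thm:kappa} directly. The key structural fact I would exploit is that a nilmanifold has a minimal model of a very rigid form: $(\wedge V, d)$ where $V$ is concentrated in degree $1$ and $d$ is the dual of the Lie bracket of the associated nilpotent Lie algebra. In particular $H^1(N)$ is the space of closed generators, and every class of $H^2(N)$ that is \emph{needed} for the symplectic form can be manufactured from degree-$1$ generators. The guiding idea is that the whole cohomology, including $[\omega]$, is controlled by $H^1$, so the ``detective'' mechanism of Definition~\ref{defn:kappa} should detect extendability through the full map $H^*(f)$ rather than through a single linear piece.

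\emph{First steps.} I would begin by recording the minimal model of $N$ and choosing a basis $x_1, \dots, x_n$ of $V = V^1$ with $\omega = \sum_{i<j} c_{ij}\, x_i x_j$ a symplectic cocycle. Next I would analyse the Leray--Serre spectral sequence of ${\mathcal F}$, following Remark~\ref{rem:coupling}: the class $[\omega] \in E_2^{0,2}$ extends to $P$ precisely when it is a permanent cycle, and the only potentially nonzero differential is $d_2$ on $E_2^{0,2}$ (the $d_3$ argument of Remark~\ref{rem:coupling} kills the higher differential once $d_2[\omega]=0$). So the entire question is whether $d_2[\omega] = 0$ in $E_2^{2,1} = H^2(B)\otimes H^1(N)$. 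Because $[\omega]$ is a product of degree-$1$ classes, $d_2[\omega]$ is determined by $d_2$ on $E_2^{0,1} = H^1(N)$ via the Leibniz rule, and $d_2|_{H^1(N)}$ is exactly the transgression-type map carrying the information of $H^*(f)$ on the indecomposables coming from degree~$1$.

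\emph{Identifying the two conditions.} The heart of the proof is to show that the vanishing of $H^*(f) : H^*(B\aut{N}) \to H^*(B)$ is equivalent to the vanishing of $d_2[\omega]$. For one direction, if $H^*(f)$ is trivial then in particular it kills the images of all the relevant transgressions, forcing $d_2$ to vanish on the degree-$1$ classes and hence on $\omega$, so $[\omega]$ extends. The converse is the subtler direction and is where I expect the main obstacle: I must show that extendability of the single class $[\omega]$ forces $H^*(f)$ to annihilate \emph{all} of $H^*(B\aut{N})$, not merely the piece detecting $\omega$. For this I would argue that $H^*(B\aut{N})$ is a polynomial (free) algebra—this is asserted in the Remark preceding the theorem and I would reconstruct it from the formality and the degree-$1$ concentration of $N$'s model—so that $H^*(f)$ is determined by its effect on indecomposables, and those indecomposables are in turn governed by the transgressions of $H^1(N)$. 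Since $\omega$ is a \emph{symplectic} cocycle, its nondegeneracy means that every degree-$1$ generator appears in $\omega$, so the coupling computation of Remark~\ref{rem:coupling} (applied to the nondegenerate pairing rather than to a single power) forces $d_2$ to vanish on all of $H^1(N)$ once it vanishes on $[\omega]$. Propagating this through the polynomial structure of $H^*(B\aut{N})$ then yields triviality of $H^*(f)$ in every degree, completing the equivalence.
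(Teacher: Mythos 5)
Your ``if'' direction is essentially sound once rephrased correctly: triviality of $H^2(f)$ kills $d_2$ on all of $E_2^{0,*}$ by naturality of the Leray--Serre spectral sequence with respect to the classifying map (comparison with the universal $N$-fibration), and then the argument of Remark \ref{rem:coupling} finishes; no Leibniz rule is needed there. The genuine gap is in the ``only if'' direction, exactly at the step you call the heart of the proof. You claim that $d_2[\omega]$ is determined by $d_2|_{H^1(N)}$ via the Leibniz rule because ``$[\omega]$ is a product of degree-$1$ classes,'' and that nondegeneracy then forces $d_2|_{H^1(N)}=0$. This is false for a general nilmanifold: in the minimal model $\omega=\sum_{i<j}c_{ij}x_ix_j$ is a product of degree-$1$ \emph{model generators}, but the generators with $dx_i\neq 0$ are not cocycles and define no elements of $E_2^{0,1}=H^1(N)$, so the Leibniz rule places no constraint on $d_2[\omega]$. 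Concretely, for the Kodaira--Thurston nilmanifold with model $\wedge(a,b,c,z)$, $da=db=dc=0$, $dz=ab$, one has $H^2(N)=\K\{[ac],[az],[bc],[bz]\}$, the subspace of classes decomposable in $H^*(N)$ is $\K\{[ac],[bc]\}$, and every class in that subspace has square zero; hence \emph{every} symplectic class on this manifold (for instance $[ac+bz]$) has a nonzero indecomposable component, and $d_2$ on that component is not controlled by $d_2$ on $H^1(N)$ in any formal way. Your proposal offers no mechanism for computing $d_2$ on such indecomposables, and that is precisely where the difficulty of the theorem lives. A secondary error: you propose to recover the polynomial structure of $H^*(B\aut{N})$ from ``formality'' of $N$, but by Hasegawa's theorem \cite{Hase} a non-toral nilmanifold is never formal; the paper obtains this structure instead from the Brown--Szczarba model, which shows $H^*(\aut{N})$ is an exterior algebra on degree-$1$ generators, whence the Eilenberg--Moore spectral sequence for $B\aut{N}$ collapses.

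What is missing is the external input that the paper uses in place of your nondegeneracy heuristic. The paper never attempts to prove ``$d_2[\omega]=0\Rightarrow d_2|_{H^1(N)}=0$'' inside the spectral sequence of ${\mathcal F}$. Instead it argues by contradiction with three ingredients: (i) since $H^*(B\aut{N})$ is polynomial on degree-$2$ generators, nontriviality of $H^*(f)$ forces nontriviality of $f_*:\pi_2(B)_\K\to\pi_2(B\aut{N})_\K$; (ii) Gottlieb's theorem \cite{G} gives $\partial_\K=ev_*\circ\partial'_\K\circ f_*$, where $\partial'$ is the connecting map of the universal $\aut{N}$-fibration, and the Brown--Szczarba model shows $ev_*:\pi_1(\aut{N})_\K\to\pi_1(N)_\K$ is injective, so $\partial_\K\neq 0$; (iii) by the proof of Stepie\'n's theorem \cite{St} (the Lalonde--McDuff conjecture for nilmanifolds), extendability of $[\omega]$ forces the rationalized fibration to be trivial, hence $\partial_\K=0$ --- a contradiction. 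Ingredient (iii) is the deep step your argument was meant to replace, and without it, or some genuine substitute, the proof does not close.
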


An outline of this paper is as follows. 
In Section 2, 
we recall briefly a model for the evaluation map of a function space 
from \cite{B-M}, \cite{H-K-O} and \cite{Ku1} of which we make extensive use . Section 3 is devoted to
proving Theorem \ref{thm:kappa}, Propositions \ref{prop:more_ex1},  \ref{prop:more_ex2} 
and Corollary \ref{cor:more_ex3}.   
In Section 4, by considering the homomorphism induced by the evaluation
map $\aut{N} \to N$ on the fundamental group, 
we prove Theorem \ref{thm:nil-ham}.  
In Appendix, we deal with the
extension problem of a symplectic class in a fibration without assuming 
the nilpotentness of the fibre. 
%A generalization of Theorem \ref{thm:kappa} is obtained; 
%see Theorem \ref{thm:kappa'} and Remark \ref{rem:generalization}.   

\section{A model for a function space}

For the convenience of the reader and to make notation more precise, 
we recall from \cite{B-S} and \cite{Ku1} a Sullivan model for a function
space and a model for the evaluation map.    
We shall use the same terminology as in \cite{B-G} and \cite{F-H-T}.

Let $(B, d_B)$ be a connected, locally finite DGA and $B_*$ denote
the differential graded coalgebra defined by $B_q=\mbox{Hom}_\K(B^{-q}, \K)$
for $q\leq 0$ together with the coproduct $D$ and the differential $d_{B
*}$ which are dual to the multiplication of $B$ and to the differential
$d_B$, respectively. 
We denote by $I$ the ideal of the free algebra 
$\wedge(\wedge V \otimes B_*)$ 
generated by $1\otimes 1_* -1$ and all elements of the form 
$$
a_1a_2\otimes \beta - 
\sum_i(-1)^{|a_2||\beta_i'|}(a_1\otimes \beta_i')(a_2\otimes \beta_i''), 
$$
where $a_1, a_2 \in \wedge V$, $\beta \in B_*$ and $D(\beta) 
= \sum_i\beta'_i\otimes  \beta''_i$. 
Observe that $\wedge(\wedge V \otimes B_*)$ is a DGA with the differential 
$d := d_A\otimes 1\pm 1\otimes d_{B *}$. 

The result \cite[Theorem 3.5]{B-S} implies that the composite 
$$
\rho : \wedge(V \otimes B_*)  \hookrightarrow 
\wedge(\wedge V \otimes B_*) \to 
\wedge(\wedge V \otimes B_*)/I
$$ 
is an isomorphism of graded algebras. Moreover it follows from 
\cite[Theorem 3.3]{B-S} that $I$ is a differential
ideal; that is, $dI \subset I$.  
We then define a DGA of the form 
$(\wedge(V \otimes B_*), \delta = \rho^{-1}d\rho)$.  
Observe that, for an element $v \in V$ and a cycle $e \in B_*$,  
if  $d(v) = v_1\cdots v_m$ with $v_i \in V$ and 
$D^{(m-1)}(e) = \sum_je_{j_1}\otimes \cdots \otimes e_{j_m}$,
then
$$
\begin{array}{lcl}
\delta(v\otimes e) 
&=&\sum_j\pm (v_1\otimes e_{j_1}) \cdots (v_m\otimes e_{j_m}).  
\end{array}
\eqnlabel{add-1}
$$
Here the sign is determined by the Koszul  rule that   in a graded algebra 
$ab = (-1)^{\deg a \deg b} ba$.  

Let $A_{PL}$ be the simplicial commutative cochain algebra of polynomial
differential forms with coefficients in ${\mathbb Q}$; see
\cite{B-G} and  \cite[Section 10]{F-H-T}.  
Let $\mathcal{A}$ and  $\Delta {\mathcal S}$ be the category of DGA's
and that of simplicial sets, respectively.  
For $A, B\in  ob\mathcal{A}$, let $\mbox{DGA}(A, B)$ denote the set of DGA
maps from $A$ to $B$.
Following Bousfield and Gugenheim \cite{B-G}, we define functors 
$\Delta : \mathcal{A} \to  \Delta {\mathcal S}$ 
and $\Omega : \Delta {\mathcal S} \to  \mathcal{A}$
by $ \Delta (A)= \mbox{DGA}(A, A_{PL})$ and by 
$\Omega (K) = \mbox{Simpl}(K, A_{PL})$, respectively.  

Let $S_*(U)$ denote the singular
simplicial set associated with  a space $U$.   
Let $A_{PL}(U)$ be the DGA  
of polynomial differential forms on a space $U$, namely
$A_{PL}(U)=\Omega S_*(U)$. 
For spaces $X$ and $Y$, let $\F(X, Y)$ stand for the space of all continuous
maps from $X$ to $Y$. The connected component of $\F(X, Y)$ containing a
map $f : X \to Y$ is denoted by $\F(X, Y; f)$. We observe that $\aut{M}$ is 
nothing but the function space $\F(M, M; id_{M})$.  
Let 
$\alpha : A=(\wedge V, d)\stackrel{\simeq}{\to} A_{PL}(Y)$  
be a minimal model for $Y$ and 
$\beta : (B, d) \stackrel{\simeq}{\to} A_{PL}(X)$ 
a Sullivan model for $X$ for which $B$ is connected and locally
finite.

We choose a basis $\{a_k, b_k,  c_j \}_{k,j}$ 
for $B_*$ so that $d_{B_*}(a_k)=b_k$, 
$d_{B_*}(c_j)=0$ and $c_0 = 1$.
Moreover we take a basis $\{v_i\}_{i\geq 1}$ for $V$ which satisfies the condition that 
$\deg v_i \leq \deg v_{i+1}$ and $d(v_{i+1})\in \wedge V_i$ for any $i$, where
$V_i$ is the subspace spanned by the elements $v_1 ,..., v_i$.
The result \cite[Lemma 5.1]{B-S} ensures that there exist free algebra
generators $w_{ij}$, $u_{ik}$ and $v_{ik}$ of $\wedge(V\otimes B_*)$ such that  \\
(2.2) $w_{i0}= v_i\otimes 1_*$  and
$w_{ij}= v_i \otimes c_j + x_{ij}$, where $x_{ij}\in \wedge(V_{i-1} \otimes
B_*)$, \\
(2.3) $\delta w_{ij}$ is decomposable 
in $\wedge(\{w_{sl} ; s < i\})$ and \\
(2.4) $u_{ik}= v_i\otimes a_k$ and ${\delta}u_{ik}=v_{ik}$.

\noindent
Thus we have an inclusion 
$$
\gamma : E:=(\wedge(w_{ij}), \delta) \hookrightarrow  
(\wedge(V\otimes B_*), \delta)
\leqno{(2.5)}
$$ 
which is a homotopy equivalence with a retract 
$$
r :  (\wedge(V\otimes B_*), \delta) \to E. 
\leqno{(2.6)} 
$$
We refer the reader to \cite[Lemma 5.2]{B-S} for more details. 
Let $q$ be a Sullivan representative for a map $f : X \to Y$; that is,
$q$ fits into the homotopy commutative diagram 
$$
\xymatrix@C30pt@R15pt{
B \ar[r]^(0.45){\simeq} & A_{PL}(X) \\
 \wedge V \ar@{>}[u]^q \ar[r]_(0.45){\simeq} & A_{PL}(Y) . 
  \ar[u]_{A_{PL}(f)}
}
$$ 
Moreover we define a DGA map 
$\widetilde{u} :  \wedge(\wedge V\otimes B_*)/I \to \K$ 
by
$$
\widetilde{u}(a\otimes b)=(-1)^{\tau(|a|)}b(q(a)), 
\leqno{(2.7)} 
$$
where $a \in \wedge V$, $b \in B_*$ and $\tau(n)=[\frac{n+1}{2}]$. 
With the functor $\Delta : {\mathcal A} \to \Delta {\mathcal S}$
mentioned above, we put $u= \Delta(\gamma)\widetilde{u}$, where 
$\widetilde{u}$ is regarded as a $0$-simplex in 
$\Delta(\wedge(\wedge V\otimes B_*)/I)$.  
Let $M_u$ be the ideal of $E$ generated by the set
$$
\{\omega \mid \deg \omega < 0\} \cup \{\delta\omega \mid \deg \omega =
0\}\cup
\{\omega - u(\omega)\mid \deg \omega  = 0\}.
$$
We assume that, for a function space $\F(X, Y)$ which we deal with,   
$$
X  \ \text{is connected finite CW complex and} \  Y \  \text{is a nilpotent space or} 
\leqno{(2.8)}
%\eqnlabel{add-2}
$$
$$
Y \ \text{is a rational space and}  \dim \oplus_{q\geq 0}H^q(X; \K)< \infty \  \text{or}  
\dim \oplus_{i\geq 2}\pi_i(Y)< \infty. 
\leqno{(2.9)}
%\eqnlabel{add-2}
$$
Then the result \cite[Theorem 6.1]{B-S} yields that the DGA $(E/M_u, \delta)$ 
is a model for a connected component of the function space $\F(X, Y)$ containing $f$. 
Observe that, by forming the quotient $E/M_u$, one eliminates all elements of negative 
degree. Moreover an element $\omega$ of degree $0$ becomes a cycle, 
identified with the scalar $u(\omega)$.  

The proofs of \cite[Proposition 4.3]{Ku1} and \cite[Remark 3.4]{H-K-O}
allow us to deduce the following proposition; see also \cite{B-M}.

\begin{prop}\label{prop:ev-model} 
Let $\{b_j\}$ and $\{b_{j*}\}$ be 
a basis of $B$ and its dual basis of  $B_*$, respectively. 
Under the assumption (2.8) or (2.9), %with the same notation as above, 
we define a map  
$m(ev) : A=(\wedge V, d) \to  (E/M_u, \delta)\otimes B$ 
by  
$$m(ev)(x)= \sum_{j}(-1)^{\tau(|b_j|)}
\pi\circ r(x\otimes b_{j *})\otimes b_j,    
$$
for $x \in A$.  
Then $m(ev)$ is a model for the evaluation map 
$ev : \F(X, Y; f) \times X \to Y$; that is, there exists a homotopy
 commutative diagram 
$$
\xymatrix@C30pt@R15pt{
A_{PL}(Y) \ar[r]^(0.4){A_{PL}(ev)}  &  A_{PL}(\F(X, Y; f) \times X)  \\
  & A_{PL}(\F(X, Y; f)\otimes A_{PL}(X) \ar[u]_{\simeq}\\
 A \ar@{>}[uu]_{\simeq}^{\alpha} \ar[r]_(0.4){m(ev)} & 
 (E/M_u, \delta)\otimes B  \ar[u]^{\simeq}_{\xi \otimes \beta}
}
$$
in which 
$\xi : (E/M_u, \delta) \stackrel{\simeq}{\to} A_{PL}(\F(X, Y; f))$ 
is the Sullivan model for $\F(X, Y; f)$ due to Brown and Szczarba
 \cite{B-S}.   
\end{prop}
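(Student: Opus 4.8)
The plan is to realize $m(ev)$ as the algebraic transpose, under the currying adjunction, of the identity self-map of $\F(X,Y;f)$, thereby reducing the statement to the evaluation-map models already constructed in \cite[Proposition 4.3]{Ku1} and \cite[Remark 3.4]{H-K-O}; what then remains is to confirm that the explicit formula is the correct coevaluation and that their arguments survive under the hypotheses (2.8) and (2.9). I would organize the verification into three steps.

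First I would confirm that $m(ev)$ is a morphism of DGAs from $A$ to $(E/M_u, \delta)\otimes B$. Multiplicativity is essentially forced by the relations defining the ideal $I$: since $\rho$ identifies $(xy)\otimes b_{j*}$ with the expansion $\sum \pm (x\otimes b_{j*}')(y\otimes b_{j*}'')$ dictated by $D(b_{j*})$, and since $r$ and $\pi$ are algebra maps, the duality between the product of $B$ and the coproduct $D$ of $B_*$ turns $\sum_j \pi r((xy)\otimes b_{j*})\otimes b_j$ into the product $m(ev)(x)\,m(ev)(y)$. Compatibility with differentials is checked on a generator $v\in V$: writing $\delta(v\otimes b_{j*})$ via (2.1) together with the term coming from $d_{B_*}b_{j*}$, and using that $\pi r$ is a chain map, the first contribution reassembles into $m(ev)(dv)$, while the second, by the duality of $d_{B_*}$ with $d_B$, reassembles into $\pm(1\otimes d_B)m(ev)(v)$. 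The signs $(-1)^{\tau(|b_j|)}$ are exactly what is needed for these two reassemblies to respect the Koszul rule.

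Next I would identify $m(ev)$ with a model for the evaluation. The map $ev:\F(X,Y;f)\times X \to Y$ is adjoint to the identity of $\F(X,Y;f)$, and the Brown--Szczarba construction realizes this identity, at the level of the functor $\Delta$ and the $0$-simplex $u=\Delta(\gamma)\widetilde{u}$, as the identity of $E/M_u$. Transposing back through the duality pairing between $B_*$ and $B$, that is, summing the $\F$-components $\pi r(x\otimes b_{j*})$ against the dual basis $b_j$, produces precisely the displayed formula for $m(ev)$; this is the content of \cite[Proposition 4.3]{Ku1} and \cite[Remark 3.4]{H-K-O}. The homotopy commutativity of the square then follows from this adjoint description, with $\xi$ the Brown--Szczarba equivalence of \cite{B-S} and $\beta$ the chosen Sullivan model for $X$; since $A$ is minimal, hence cofibrant, it suffices to exhibit the comparison homotopy between $(\xi\otimes\beta)\circ m(ev)$ and $A_{PL}(ev)\circ\alpha$ on the generating space $V$ and extend over the cylinder $A\otimes(\wedge(t,dt))$.

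The main obstacle is twofold. Conceptually one must pin down the adjoint/universal description sharply enough that ``model for the identity'' unambiguously yields the coevaluation formula, which is where the cited references are indispensable. Technically the delicate point is the sign and duality bookkeeping in the chain-map verification, since $\delta$ is given only implicitly through $\rho^{-1}d\rho$ and the iterated coproducts $D^{(m-1)}$, while the retract $r$ is characterized only by (2.2)--(2.6); threading the pairing of $D$ against the product of $B$ through the factors $(-1)^{\tau(|b_j|)}$ is the real labor. Finally one must check that the arguments of \cite{Ku1} and \cite{H-K-O} remain valid under the combined hypotheses (2.8) or (2.9), which is exactly the regime in which \cite[Theorem 6.1]{B-S} guarantees that $(E/M_u,\delta)$ models the component $\F(X,Y;f)$.
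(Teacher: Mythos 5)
Your proposal takes essentially the same route as the paper: the paper offers no independent argument for Proposition \ref{prop:ev-model}, deducing it directly from the proofs of \cite[Proposition 4.3]{Ku1} and \cite[Remark 3.4]{H-K-O} (see also \cite{B-M}), which is precisely the reduction you make. Your outline of the currying adjunction and the DGA-morphism/sign verifications is exactly the mechanism underlying those cited proofs, so nothing in your plan diverges from the paper's treatment.
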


We conclude this section by recalling a result due to Gugenheim and May,
which is used in the proof of Theorem \ref{thm:kappa}.  

\begin{prop}\cite[Corollary 3.12]{G-M}
\label{prop:G-M}
Let ${\mathcal G}$ be a topological monoid with identity and $\{E_r, d_r\}$ the
 Eilenberg-Moore spectral sequence converging to $H^*(B{\mathcal G})$. 
Let $\sigma : H^*({\mathcal G}) \to E_1^{1, *}=B^{1,*}(\K, H^*({\mathcal
 G}), \K)$ be a map
 defined by $\sigma(x)=[x]$, where $B^{*,*}(\K, H^*({\mathcal G}), \K)$
 denotes the bar complex. Then the additive relation 
$$
\xymatrix@C15pt@R10pt{
H^{*+1}(B{\mathcal G}) \cong F^1\text{\em Cotor}_{C^*({\mathcal G})}(\K,
 \K) 
\ar@{->>}[r] & E^{1, *}_\infty  \ 
 \ar@{>->}[r] & \cdots  \ \ar@{>->}[r] & E_1^{1, *}  
       & H^*({\mathcal G}) \ar[l]_(0.5){\sigma}
}
$$
coincides with the cohomology suspension 
$\xymatrix@C10pt
{\sigma^* : H^{*+1}(B{\mathcal G}) \ar[r]^(0.55){\pi^*}  &
 H^{*+1}(E{\mathcal G}, {\mathcal G}) & H^*({\mathcal G}) 
\ar[l]_(0.4){\delta}^(0.4){\cong}
}$. 
\end{prop}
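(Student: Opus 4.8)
The plan is to prove the statement by realizing both additive relations at the cochain level of an explicit bar model for $B{\mathcal G}$ and matching them term by term. First I would model the universal bundle ${\mathcal G} \to E{\mathcal G} \to B{\mathcal G}$ by the two-sided and one-sided bar constructions, taking $B{\mathcal G} = B(\ast, {\mathcal G}, \ast)$ and $E{\mathcal G} = B(\ast, {\mathcal G}, {\mathcal G})$, so that $E{\mathcal G}$ is contractible with fibre ${\mathcal G}$ and the projection $\pi$ is induced by collapsing the last bar coordinate. Dualizing, the normalized cochains of $B{\mathcal G}$ are quasi-isomorphic to the reduced bar complex $B^{*,*}(\K, C^*({\mathcal G}), \K)$ computing $\text{Cotor}_{C^*({\mathcal G})}(\K, \K) \cong H^*(B{\mathcal G})$, and the decreasing filtration by bar (word) length induces precisely the Eilenberg--Moore spectral sequence with $E_1^{s,*} = B^{s,*}(\K, H^*({\mathcal G}), \K)$ and $E_2 = \text{Cotor}_{H^*({\mathcal G})}(\K, \K)$. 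Under this identification $\sigma(x) = [x]$ is the bar-length-one element, and the filtration-one survivors $E_\infty^{1,*} \hookrightarrow \cdots \hookrightarrow E_1^{1,*}$ are monomorphisms because for $r \geq 2$ there is no incoming differential into bar filtration $1$, the source $E_r^{1-r,*}$ being zero.

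Next I would give the cochain-level formula for the cohomology suspension. Because $E{\mathcal G}$ is acyclic, the connecting map $\delta : H^*({\mathcal G}) \to H^{*+1}(E{\mathcal G}, {\mathcal G})$ is an isomorphism, and I would realize $\delta^{-1}$ by the canonical contracting homotopy of the one-sided bar construction $B(\ast, {\mathcal G}, {\mathcal G})$: a cocycle $c$ representing $x \in H^*({\mathcal G})$ lifts to the relative cochain $[c]$ of bar length one on the pair $(E{\mathcal G}, {\mathcal G})$, and $\pi^*$ identifies this with a cochain on $B{\mathcal G}$. Thus at the cochain level $\sigma^* = \delta^{-1}\pi^*$ is exactly the operation ``represent a class by a bar cocycle and read off its bar-length-one component,'' up to the standard degree-shift sign of the bar construction.

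I would then compare this with the spectral-sequence relation. A class $u \in F^1 H^{*+1}(B{\mathcal G})$ is represented by a bar cocycle $\omega$ whose lowest nonzero term has bar length $\geq 1$; its image in $E_1^{1,*}$ is the bar-length-one leading term, which survives to $E_\infty^{1,*}$ precisely because $\omega$ is a cocycle. Saying that $(u, x)$ lies in the additive relation means this leading term equals $\sigma(x) = [c]$, that is $\omega = [c] + (\text{bar length} \geq 2)$, and the previous step shows this is the same condition as $\sigma^*(u) = x$. To conclude equality of additive relations I would check that the two relations have the same domain and indeterminacy: $u$ lies in the domain of the spectral-sequence relation iff it has filtration $\geq 1$, which matches the subspace on which $\pi^*$, and hence $\sigma^*$, can be nonzero; and modifying $\omega$ by a coboundary or by an element of $F^2$ alters the bar-length-one term only by a length-one bar coboundary, which is exactly the indeterminacy inherent in $\delta^{-1}\pi^*$. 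Hence the graphs in $H^{*+1}(B{\mathcal G}) \oplus H^*({\mathcal G})$ coincide.

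I expect the main obstacle to be the bookkeeping of the second step: pinning down the contracting homotopy of $E{\mathcal G}$ and the induced chain-level formula for $\delta^{-1}\pi^*$ so that it matches the bar-filtration edge with the correct degree-shift signs, and, more delicately, verifying the equality as \emph{additive relations} rather than merely as induced maps on the associated graded, which requires checking that the domains of definition and the indeterminacy subgroups agree on both sides. This amounts to a naturality comparison, of acyclic-models type, between the geometric pair $(E{\mathcal G}, {\mathcal G})$ and the algebraic two-sided bar construction, and carrying the bar filtration compatibly through that comparison is where the genuine work lies.
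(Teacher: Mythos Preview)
The paper does not give its own proof of this proposition: it is quoted as \cite[Corollary~3.12]{G-M}, and the only comment following the statement is that the original result is for the homology spectral sequence with ${\mathcal G}$ a topological group, and that one may use May's classifying space of a monoid together with the dual argument to obtain the cohomological version stated here. So there is no ``paper's proof'' to compare against beyond that remark.

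Your outline is a reasonable reconstruction of what such a proof looks like and is in the spirit of the Gugenheim--May argument: model $B{\mathcal G}$ and $E{\mathcal G}$ by bar constructions, identify the Eilenberg--Moore filtration with the bar word-length filtration, and check that the length-one edge coincides with the cohomology suspension by writing both at the cochain level. Your own caveat is accurate: the substantive content is the acyclic-models/naturality comparison between the geometric pair $(E{\mathcal G},{\mathcal G})$ and the algebraic bar model, together with the verification that domains and indeterminacies of the two additive relations match. That is exactly what the reference \cite{G-M} supplies, and the paper is content to invoke it rather than redo it.
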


Originally, this result is proved for the homology spectral sequence  
in the case where ${\mathcal G}$ is a topological group. However one can
use the notion of the classifying space of a monoid due to May
\cite{May1} to construct the Eilenberg-Moore spectral sequence 
mentioned above.  
The dual argument of the proof does work well to prove 
Proposition \ref{prop:G-M}.

\section{Proofs of Theorem \ref{thm:kappa},  Propositions \ref{prop:more_ex1} and \ref{prop:more_ex2}}

In order to prove Theorem \ref{thm:kappa}, we begin with a consideration on  
a minimal model for a ${\mathbb T}^{2k}$-separable symplectic manifold. 

Let $M$ be a ${\mathbb T}^{2k}$-separable symplectic
manifold which admits a minimal model of the form described 
in Definition \ref{defn:separable}. 
Since $H^*(M; \K)$ and $H^*({\mathbb T}^{2k})$ satisfy 
the  Poincar\'e duality,  so does $H^*(\wedge Z, d)$. 
Therefore if $M$ is a $2n$-dimensional symplectic manifold, then 
$H^*(\wedge Z, d)$ is a $2(n-k)$-dimensional Poincar\'e duality
algebra. We see that 
$$
0\neq [\omega]^n = q^mq_1\cdots q_k\frac{n!}{m!}
\gamma([\beta^m(t_{1_1}t_{1_2})\cdots (t_{k_1}t_{k_2})])  
$$ 
in $H^*(M; {\mathbb R})$, where $m = n -k$. 
Thus the element $[\beta^m]$ is the top class of
$H^*(\wedge Z, d)$.  This enables us to construct a minimal model
$(\wedge V, d)$ for $M$ of the form 
$$
\bigotimes_{i=1}^k(\wedge (t_{i_1}, t_{i_2})), 0)
\otimes (\wedge (\beta, y, ..), d)
$$ 
with $d(y) = \beta^{m+1}$. It follows from \cite[Theorem 1.1]{L-S} 
that there exist a simply-connected Poincar\'e duality DGA $(C, d)$ and a 
quasi-isomorphism 
$(\wedge (\beta, y, ..), d)  \to (C, d)$. 
Thus we have a quasi-isomorphism 
$$\eta : (\wedge V, d) \to 
\bigotimes_{i=1}^k(\wedge (t_{i_1}, t_{i_2}), 0) \otimes (C, d)=:(B, d). 
$$
Since $(C, d)$ is a Poincar\'e duality algebra, 
it follows that  $C^{2m-1}= 0$.   
This implies that $d_*(\eta(\beta^m)_*)=0$ and  
$d_*(\eta(\beta^mt_{i_\lambda})_*)=0$. It is evident that 
$\eta(\beta^m)_*$ and $\eta(\beta^mt_{i_\lambda})_*$ are non-exact in
$B_*$.

By using the minimal model $(\wedge V, d)$, we construct the
model $E/M_u$ for $\aut{M}$ as in Section 2. Observe that, in this case,  the DGA map $q$ in
(2.7) is the identity map. Thus we have   
\begin{eqnarray*}
E/M_u = \wedge (y\otimes 1_*, ..., 
y\otimes \eta^{\vee}((\eta\beta^{m-1})_*), y\otimes \eta^{\vee}((\eta\beta^m)_*), 
\beta\otimes 1_*,  t_{i_1}\otimes 1_*, t_{i_2}\otimes 1_*,  ...  )
\end{eqnarray*}
with $\delta(y\otimes \eta^{\vee}((\eta\beta^m)_*))= p \beta\otimes 1_*$ 
for some non-zero rational number $p$, 
where $\eta^\vee$ denotes the dual homomorphism 
$B_* \to (\wedge V)^\vee=\text{Hom}_\K(\wedge V, \K)$ of $\eta$; see (2.2), (2.3), (2.4), (2.5) and (2.6).  
In fact, we can obtain the explicit form on the differential as follows: 
Let $\Delta$ be the coproduct on $C_*$ which is the dual to the multiplication of $C$. 
Let $\Delta^{(m)} : C_* \to C_*^{\otimes (m+1)}$ be
the iterated coproduct defined by 
$\Delta^{(m)} = (\Delta\otimes 1^{\otimes m-1})\circ \cdots\circ  (\Delta\otimes 1)\circ \Delta$. 
Then we see that 
\begin{eqnarray*}
\Delta^{(m)}(\eta \beta^m)_* 
&=& 1\otimes (\eta \beta)_*\otimes \cdots \otimes (\eta \beta)_* + 
 (\eta \beta)_*\otimes 1\otimes(\eta \beta)_* \otimes \cdots \otimes (\eta
 \beta)_* \\
&&+ (\eta \beta)_*\otimes \cdots \otimes (\eta \beta)_*\otimes 1 + \ 
\text{other terms}. 
\end{eqnarray*}
Thus formula (2.1) enables us to conclude that in $E/M_u$ 
\begin{eqnarray*}
\delta(y\otimes \eta^{\vee}(\eta\beta^m)_*) & = & 
\beta^m\otimes \eta^{\vee}(\eta\beta^m)_* \\
& = & \beta \otimes \cdots 
\otimes \beta \cdot \eta^{\vee}(\Delta^{(m)})(\eta\beta^m) \\
& = & (m+1)
\beta\otimes 1_* \cdot \beta\otimes \eta^{\vee}(\eta \beta)_* \cdots 
\beta\otimes \eta^{\vee}(\eta \beta)_* \\
&=& (-1)^{m\tau(|\beta|)}(m+1)\beta \otimes 1_*. 
\end{eqnarray*}

We observe that elements $\beta\otimes \eta^{\vee}(\eta t_{i_\lambda})_*$ of degree $1$  do not survive in $E/M_u$. 
Indeed, the same computation as above allows us to deduce that 
$$\delta(y\otimes \eta^{\vee}(\eta\beta^mt_{i_\lambda})_*)= p'_i  \beta\otimes \eta^{\vee}(\eta t_{i_\lambda})_*
$$
for some non-zero rational number $p'_i$. Thus we have 
\begin{lem} \label{lem:Mu} $\beta\otimes \eta^{\vee}(\eta t_{i_\lambda})_*  \in M_u$. 
\end{lem}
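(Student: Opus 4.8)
The plan is to derive the lemma directly from the differential computation displayed immediately before its statement, together with the precise description of the generators of the ideal $M_u$. The key observation is that $M_u$ is generated, among other elements, by the set $\{\delta\omega \mid \deg\omega = 0\}$. Hence it suffices to produce a degree-zero element of $E$ whose differential is a non-zero scalar multiple of $\beta\otimes \eta^{\vee}(\eta t_{i_\lambda})_*$, and the computation preceding the lemma already provides a natural candidate, namely $y\otimes \eta^{\vee}(\eta\beta^m t_{i_\lambda})_*$.

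First I would check the two bookkeeping facts that make this candidate usable. For the degree, $d(y)=\beta^{m+1}$ forces $|y|=2m+1$, while $\eta\beta^m t_{i_\lambda}$ has degree $2m+1$ in $B$, so its dual $(\eta\beta^m t_{i_\lambda})_*$ lies in degree $-(2m+1)$; thus $y\otimes \eta^{\vee}(\eta\beta^m t_{i_\lambda})_*$ has degree $0$. For membership in $E$, I would use that $(\eta\beta^m t_{i_\lambda})_*$ is a cycle in $B_*$, which was already established from $C^{2m-1}=0$ and Poincar\'e duality; by (2.2)--(2.6) the product of the generator $y$ with this cycle is, up to the lower correction terms $x_{ij}$, one of the free generators $w_{ij}$ of $E$, so the element indeed lies in $E$. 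Granting these two points, the argument concludes at once: the degree-zero condition places $\delta(y\otimes \eta^{\vee}(\eta\beta^m t_{i_\lambda})_*)$ in $M_u$, the displayed identity rewrites this differential as $p'_i\,\beta\otimes \eta^{\vee}(\eta t_{i_\lambda})_*$ with $p'_i\neq 0$, and since $M_u$ is an ideal over the ground field $\K$ it is stable under multiplication by the scalar $1/p'_i$, giving $\beta\otimes \eta^{\vee}(\eta t_{i_\lambda})_*\in M_u$.

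I expect the only real obstacle to be the degree and membership bookkeeping of the second step rather than any substantive computation: one must be certain that $y\otimes \eta^{\vee}(\eta\beta^m t_{i_\lambda})_*$ is genuinely a degree-zero element of $E$, and not merely of the larger algebra $\wedge(V\otimes B_*)$, because it is exactly this degree-zero membership in $E$ that licenses the inclusion $\delta\omega\in M_u$. Once this is confirmed, the lemma follows immediately.
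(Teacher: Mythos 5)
Your proof is correct and is essentially the paper's own argument: the paper obtains the lemma precisely from the displayed identity $\delta(y\otimes \eta^{\vee}(\eta\beta^m t_{i_\lambda})_*)= p'_i\,\beta\otimes \eta^{\vee}(\eta t_{i_\lambda})_*$ together with the observation that the left-hand side is the differential of a degree-zero element of $E$, hence lies in $M_u$ by the definition of that ideal. Your additional bookkeeping (the degree count $|y|=2m+1$ versus $-(2m+1)$, membership in $E$ via (2.2)--(2.6) and the cycle property of $(\eta\beta^m t_{i_\lambda})_*$ coming from $C^{2m-1}=0$, and closure of the ideal under scalars) only makes explicit what the paper leaves implicit.
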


%& & \quad  y\otimes \eta^{\vee}(\eta (\beta^{m}t_{i_1}))_*, 
%y\otimes \eta^{\vee}(\eta (\beta^mt_{i_2}))_*, 
%\beta\otimes(t_{i_1})_*, \beta\otimes(t_{i_2})_* \\

\begin{ex}\label{ex:more_ex}
Let $M$ denote the product space ${\mathbb T}^{2k}\times {\mathbb C}P(m)$. 
A minimal model $(\wedge V, d)$ for $M$ is given by 
$$
\bigotimes_{i=1}^k(\wedge (t_{i_1}, t_{i_2}), 0)\otimes (\wedge (\beta, y), dy=x^{m+1}). 
$$ 
We define a quasi-isomorphism 
$\eta : (\wedge V, d) \to \bigotimes_{i=1}^k(\wedge (t_{i_1}, t_{i_2}), 0)\otimes \K[\beta]/(\beta^{m+1})$ 
by $\eta(t_{i_j})=t_{i_j}$, $\eta(\beta)=\beta$ and $\eta(y)=0$. The construction of the model for a function space   
described in Section 2 enables us to obtain an explicit model $E/M_u$ for $\aut{M}$. In particular, 
we see that  
\begin{eqnarray*}
(E/M_u)^1 \cong \K\{ t_{i_1}\otimes 1_*, t_{i_2}\otimes 1_*,  y\otimes \eta^{\vee}((\eta \beta^m)_*), \\
&& \hspace{-5cm}  y\otimes \eta^{\vee}((\eta \beta^{m-1}t_{j_1}t_{j_2})_*), ..., 
y\otimes \eta^{\vee}((\eta \beta^{m-q}t_{j_1}\cdots t_{j_{2q}})_*) ;  \\
&& \hspace{-0.5cm} j_u\neq j_v \ \text{if} \ u\neq v, 1\leq i_1, i_2 \leq k  
\}
\end{eqnarray*}
where $q = \min\{m, k\}$.  Moreover it follows that $\delta(y\otimes \eta^{\vee}((\eta \beta^m)_*))=p\beta\otimes 1_*$ for some 
$p\neq 0$ and $\delta (t_{i_\lambda}\otimes 1_*)=0$ for $\lambda = 1, 2$. Lemma \ref{lem:Mu} implies that  
$\delta(y\otimes \eta^{\vee}((\eta \beta^{m-s}t_{j_1}\cdots t_{j_{2s}})_*))=0$ in $E/M_u$ for $1\leq s\leq q$. 
\end{ex}

Let $\{E_r, d_r\}$ and $\{_\K E_r, d_r\}$ be 
the Leray-Serre spectral sequences of the fibration  
$(M, \omega) \to P \to B$ with coefficients in the real number field and
that with coefficients in $\K$, respectively.   
By relying on  Propositions \ref{prop:A} and 
\ref{prop:B} below, we shall prove Theorem \ref{thm:kappa}.

\begin{prop}
\label{prop:A}
The element $\beta \in \ \!\!  _{\K}E_2^{0,2}$ is a permanent cycle.
\end{prop}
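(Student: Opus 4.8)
The plan is to show that $\beta \in {}_\K E_2^{0,2} = H^2(M)$ is a permanent cycle by mimicking the coupling argument recorded in Remark \ref{rem:coupling}, but carried out over $\K$ and adapted to the $\mathbb{T}^{2k}$-separable structure. The essential point is that $\beta$ is related, through the wedge powers of the full symplectic class, to a cohomology class that is visibly a permanent cycle, namely the extendable part coming from the base. First I would observe that $\beta$ lives in $Z^2 \subset H^2(M)$, and that by Definition \ref{defn:separable} the real symplectic class decomposes as $[\omega] = q\gamma([\beta]) + \sum_i q_i \gamma([t_{i_1}t_{i_2}])$ with all $q, q_i \neq 0$. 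Since the differentials $d_r$ are derivations, the behaviour of $\beta$ under the spectral sequence is tightly constrained by the behaviour of the top power $[\omega]^n$.

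The key computation proceeds as in Remark \ref{rem:coupling}. I would first argue that $d_2(\beta) = 0$: on $E_2^{0,2} = H^2(M)$ the differential $d_2$ lands in $E_2^{2,1} = H^2(B)\otimes H^1(M)$, and since $B$ is simply-connected with $H^1(B) = 0$, the relevant bookkeeping forces the transgression-type contributions to be controlled by the detective map; more directly, one uses that $\beta^m$ is the top class of $H^*(\wedge Z, d)$, established in the discussion preceding Lemma \ref{lem:Mu}. The heart of the matter is then the Leibniz identity applied to a suitable power. Concretely, I expect to exploit the relation
\begin{equation*}
0 = d_r\bigl(\gamma([\beta])^{m+1}\bigr) = (m+1)\,\gamma([\beta])^m \otimes d_r(\gamma([\beta]))
\end{equation*}
in the appropriate $E_r$-term, where $m = n-k$, together with the fact that $\gamma([\beta])^m = [\beta^m]$ is a nonzero top-degree class in $H^*(\wedge Z, d)$. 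Because multiplication by this top class is injective in the range of interest (Poincaré duality for $H^*(\wedge Z, d)$), one concludes $d_r(\beta) = 0$ inductively on $r$. The even powers of the torus generators $t_{i_1}t_{i_2}$ are permanent cycles for a separate and easier reason, so they may be separated out of $[\omega]$ and handled independently.

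The main obstacle I anticipate is making the injectivity step rigorous across all pages of the spectral sequence, rather than just on the $E_3$-page as in Remark \ref{rem:coupling}. One must verify that at each stage $\gamma([\beta])^m$ survives to $E_r$ and that multiplication by it remains injective on the line where $d_r(\beta)$ would land; this requires knowing that no differential has already killed the top class, which in turn uses the Poincaré duality structure of $H^*(\wedge Z, d)$ together with the fact that $Z^1 = 0$ (so there are no low-degree classes that could interfere). I would therefore organise the proof as an induction on $r$: assuming $d_s(\beta) = 0$ for all $s < r$, show both that $[\beta^m]$ persists to the $E_r$-page and that the Leibniz relation above forces $d_r(\beta) = 0$. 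The separability hypothesis $(Z)^1 = 0$ is exactly what prevents spurious targets for these differentials, so I expect it to be the linchpin of the argument.
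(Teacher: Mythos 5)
Your proposal is essentially correct, but it takes a genuinely different route from the paper. The paper never argues inside the Leray--Serre spectral sequence of the given fibration: it passes to the universal fibration $M \to M_{\aut{M}} \to B\aut{M}$, uses the Eilenberg--Moore spectral sequence together with the explicit Brown--Szczarba model $E/M_u$ for $\aut{M}$ to compute $d_1(\beta)=-ev^*(\beta)=0$ (here Lemma \ref{lem:Mu} and the relation $\delta(y\otimes \eta^{\vee}((\eta\beta^m)_*))= p\, \beta\otimes 1_*$ are what kill the candidate terms $[\beta\otimes 1_*]$ and $[\beta\otimes (t_{i_\lambda})_*]$), concludes via the edge homomorphism that $\beta$ lifts to a class in $H^2(M_{\aut{M}})$, and then transfers permanence to the given fibration by naturality. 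Your argument instead stays inside $\{{}_\K E_r\}$ and exploits the algebra structure of $H^*(M)$: since $[\beta]^{m+1}=0$ (it lies above the formal dimension $2m$ of $H^*(\wedge Z,d)$; equivalently $d(y)=\beta^{m+1}$ in the model), the Leibniz rule gives $0=(m+1)[\beta]^m d_r([\beta])$, and injectivity of multiplication by $[\beta]^m$ forces $d_r([\beta])=0$. This works, and it is more elementary and self-contained; what the paper's approach buys is a stronger, universal statement ($\beta$ extends over $M_{\aut{M}}$, independently of the base) and machinery that is then reused verbatim for Proposition \ref{prop:B} and Lemma \ref{lem:kappa}.

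Three corrections to your write-up. First, the injectivity is not a consequence of Poincar\'e duality of $H^*(\wedge Z, d)$ alone: you need the K\"unneth splitting $H^*(M)\cong \bigotimes_{i}\wedge (t_{i_1},t_{i_2})\otimes H^*(\wedge Z, d)$, the hypothesis $Z^1=0$ (so that $H^1(M)=\K\{t_{i_\lambda}\}$), and the nonvanishing $[\beta]^m\neq 0$, which the paper deduces from $[\omega]^n \neq 0$; with these, $x\mapsto [\beta]^m x$ is injective on $H^1(M)$, settling $d_2(\beta)=0$, and the identifications $E_3^{3,0}=H^3(B)$ and $E_3^{3,2m}\subseteq E_2^{3,2m}$ (both valid because $H^1(B)=0$) settle $d_3(\beta)=0$. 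Second, your anticipated obstacle of running the induction over all pages is vacuous: for $r\geq 4$ the target $E_r^{r,3-r}$ of $d_r$ on a class of fibre degree $2$ is zero for degree reasons, so only $d_2$ and $d_3$ ever need checking, exactly as in Remark \ref{rem:coupling}. Third, the aside that the classes $t_{i_1}t_{i_2}$ ``are permanent cycles for a separate and easier reason'' is false and should be deleted: their failure to survive is precisely the obstruction measured by the detective map, and Theorem \ref{thm:kappa} exists because of it. Fortunately Proposition \ref{prop:A} concerns only $\beta$, so this remark is not load-bearing in your argument.
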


\begin{prop}
\label{prop:B}  The composite $H^*(f)\circ \kappa$ is trivial if and only if
$d_2(t_{i_1}t_{i_2})=0$ in  $_{\K}E_2^{*,*}$ for any $i$. 
\end{prop}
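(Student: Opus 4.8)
The plan is to identify the detective map $\kappa$ with the transgression of the universal $M$-fibration, transport the picture to $\mathcal{F}$ along the classifying map $f$, and then let the Leibniz rule for $d_2$ do the rest. First I would fix the relevant bases. By Definition \ref{defn:separable} together with the hypothesis $(Z)^1=0$, the classes $t_{1_1},t_{1_2},\dots,t_{k_1},t_{k_2}$ form a basis of $H^1(M)$, and $p_M^*$ carries a basis $s_{1_1},\dots,s_{k_2}$ of $H^1(\mathbb{T}^{2k})$ to $t_{1_1},\dots,t_{k_2}$. Next I would exploit naturality of transgression. The evaluation $ev:\aut{M}\to M$ is equivariant for the self-composition action of $\aut{M}$ on itself and its evaluation action on $M$, so it induces a morphism of fibrations from $\aut{M}\to E\aut{M}\to B\aut{M}$ to the universal $M$-fibration $M\to M_{\aut{M}}\to B\aut{M}$ which is the identity on the base and is $ev$ on the fibre. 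Since the transgression $\tau$ of Definition \ref{defn:kappa} is natural, this yields $\tau\circ ev^{*}=\tau_{\mathrm{univ}}$ on $H^{1}$, where $\tau_{\mathrm{univ}}:H^{1}(M)\to H^{2}(B\aut{M})$ is the $d_2$-transgression of the universal $M$-fibration; hence $\kappa(s_{i_\lambda})=\tau_{\mathrm{univ}}(t_{i_\lambda})$.

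Because $\mathcal{F}$ is the pullback of the universal $M$-fibration along $f$, the induced morphism of Leray--Serre spectral sequences is $H^{*}(f)$ on the base row and the identity on the fibre row. Naturality of transgression then gives
$$
H^{2}(f)\bigl(\kappa(s_{i_\lambda})\bigr)=H^{2}(f)\bigl(\tau_{\mathrm{univ}}(t_{i_\lambda})\bigr)=d_2(t_{i_\lambda})\in {}_{\K}E_2^{2,0}=H^{2}(B),
$$
the transgression in ${}_{\K}E_{*}$. As $\{s_{i_\lambda}\}$ is a basis of $H^{1}(\mathbb{T}^{2k})$, it follows that $H^{*}(f)\circ\kappa$ is trivial if and only if $d_2(t_{i_\lambda})=0$ in ${}_{\K}E_2$ for every $i$ and $\lambda$.

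It then remains to pass from the degree-one classes $t_{i_\lambda}$ to the products $t_{i_1}t_{i_2}$. The Leibniz rule in the multiplicative spectral sequence ${}_{\K}E_{*}$ gives
$$
d_2(t_{i_1}t_{i_2})=d_2(t_{i_1})\otimes t_{i_2}-d_2(t_{i_2})\otimes t_{i_1}\in {}_{\K}E_2^{2,1}=H^{2}(B)\otimes H^{1}(M).
$$
Since $t_{i_1}$ and $t_{i_2}$ are linearly independent in $H^{1}(M)$, the two summands lie in distinct $H^{1}(M)$-components and cannot cancel, so $d_2(t_{i_1}t_{i_2})=0$ if and only if $d_2(t_{i_1})=0$ and $d_2(t_{i_2})=0$. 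Combining this with the previous paragraph shows that $d_2(t_{i_1}t_{i_2})=0$ for all $i$ exactly when $d_2(t_{i_\lambda})=0$ for all $i$ and $\lambda$, which is equivalent to the triviality of $H^{*}(f)\circ\kappa$, as desired.

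The step I expect to be the main obstacle is the identification $\tau\circ ev^{*}=\tau_{\mathrm{univ}}$ linking the detective map to the transgression of the universal $M$-fibration, since it requires making precise the $ev$-equivariance and the resulting comparison of the two universal fibrations over $B\aut{M}$ within May's monoid framework. In keeping with the machinery of Section 2, an alternative is to verify it by computing $ev^{*}(t_{i_\lambda})=[t_{i_\lambda}\otimes 1_{*}]$ in the Brown--Szczarba model $E/M_u$ for $\aut{M}$ and identifying $\tau$ with the cohomology suspension via Proposition \ref{prop:G-M}. Everything downstream---the pullback naturality and the Leibniz computation---is then routine.
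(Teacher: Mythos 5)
Your proof is correct, but it reaches the crucial identification by a genuinely different route from the paper. The paper never compares the two universal fibrations directly: it first proves Lemma \ref{lem:kappa} (via the Brown--Szczarba model computation $ev^*p_M^*(t_{i_\lambda})=t_{i_\lambda}\otimes 1_*$ and Proposition \ref{prop:G-M}) that $\text{Im}\,\kappa=\K\{[t_{i_\lambda}\otimes 1_*]\}$, and then runs an Eilenberg--Moore argument for $M_{\aut{M}}$: since $d_1(t_{i_\lambda})=-[t_{i_\lambda}\otimes 1_*]\neq 0$, diagram (3.1) gives $(\text{Im}\,i^*)^1=0$, naturality of the Eilenberg--Moore spectral sequence shows $\pi^*$ kills $\K\{[t_{i_\lambda}\otimes 1_*]\}$, and since the kernel of $\pi^*$ in degree $2$ is exactly the image of the transgression $\widetilde{d}_2$ of the universal $M$-fibration, a dimension count yields $\K\{\widetilde{d}_2(t_{i_1}),\widetilde{d}_2(t_{i_2})\}=\text{Im}\,\kappa$. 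You instead obtain the stronger elementwise identity $\kappa(s_{i_\lambda})=\widetilde{d}_2(t_{i_\lambda})$ in one stroke, from the fibrewise map $E\aut{M}\to M_{\aut{M}}$ over $B\aut{M}$ induced by the equivariant map $ev$, together with naturality of $d_2$; both proofs then finish identically (pullback naturality along $f$, Leibniz rule, linear independence of $t_{i_1},t_{i_2}$). Your route is more geometric and bypasses the function-space model and both Eilenberg--Moore spectral sequences for this proposition, at the cost of needing the associated-fibration fact in May's monoid framework (that the universal $M$-fibration is associated to the universal principal quasifibration, so that $ev$ induces a map over the base restricting to $ev$ on fibres); this is standard, and you rightly flag it as the delicate point. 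One caveat about your proposed fallback: computing $ev^*(t_{i_\lambda})=[t_{i_\lambda}\otimes 1_*]$ in $E/M_u$ and invoking Proposition \ref{prop:G-M} only reproves Lemma \ref{lem:kappa}, i.e.\ it identifies the classes $\tau(ev^*t_{i_\lambda})$ inside $H^2(B\aut{M})$; by itself it does not relate them to the Leray--Serre transgression $\widetilde{d}_2$ of the universal $M$-fibration, which is precisely what the paper's Eilenberg--Moore argument for $M_{\aut{M}}$ supplies. So the fallback would need that extra step, but your primary argument does not.
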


\medskip
\noindent
{\it Proof of Theorem \ref{thm:kappa}}.
The inclusion $\K \to {\mathbb R}$ induces the morphism of spectral
sequences $\{\gamma_r\} : \{ _\K E_r, d_r\} \to \{E_r, d_r\}$ for which 
$\gamma_2$ is injective.
   
%Proposition \ref{prop:A} implies that $\gamma_2(\beta_j)$ is a permanent
%cycle for any $j$ and hence so is $\beta \in E_2^{0,2}$. 
By virtue of Proposition \ref{prop:A}, we have 
\begin{eqnarray*}
d_2(\omega)  &=&  \gamma_2 
d_2(q\beta + \sum_{i=1}^kq_i t_{i_1}t_{i_2})
=\sum_{i=1}^kq_i\gamma_2d_2(t_{i_1}t_{i_2}) \\ 
&= & \sum_{i=1}^kq_i\gamma_2 \Bigl(d_2(t_{i_1})t_{i_2} 
 - t_{i_1}d_2(t_{i_2})\Bigr). 
\end{eqnarray*}
Thus it follows from Proposition \ref{prop:B} that 
$d_2(\omega)= 0$ if and only if $H^*(f)\circ \kappa$ is trivial. 
Observe that $q_i\neq 0$ for any $i$. 
Suppose that $d_2(\omega)=0$. Then the argument in Remark \ref{rem:coupling}
yields that $d_3(\omega)= 0$. This completes the proof. 
\hfill\qed

\medskip
In order to prove Propositions \ref{prop:A} and \ref{prop:B}, 
we use two spectral sequences. 
Let $\{_{EM}E_r, d_r\}$ be the Eilenberg-Moore spectral sequence 
with coefficients in $\K$ converging to $H^*(M_{\aut{M}})$ 
with 
$$
_{EM}E_2^{*,*}\cong \text{Cotor}_{H^*(\text{aut}_1(M))}^{*,*}(\K, H^*(M))
$$
as an algebra. 
Let $\{_\K \widetilde{E}_r, \widetilde{d}_r\}$ be the Leray-Serre
spectral sequence of the universal $M$-fibration  
$M \stackrel{i}{\to} M_{\aut{M}} \stackrel{\pi}{\to} B\aut{M}$ 
with coefficients in $\K$. Observe that 
$$
_\K \widetilde{E}_2^{i,j}\cong  H^i({B\aut{M}})\otimes H^j(M)
$$ as a bigraded algebra 
since $B\aut{M}$ is simply-connected.

\medskip
\noindent
{\it Proof of Proposition \ref{prop:A}}.
Let us consider the Eilenberg-Moore spectral sequence 
$\{_{EM}E_r, d_r\}$ mentioned above. Then the differential 
$$
d_1 : \ _{EM}E_1^{0,*}=H^*(M) \to \ _{EM}E_1^{1,*}
=\overline{H^*(\aut{M})}\otimes H^*(M)
$$
is induced by the evaluation map; that is, $d_1(x)= - ev^*(x)$ for any 
$x \in H^*(M)$, where $\overline{H^*(\aut{M})}=H^*(\aut{M})/\K$; see 
\cite{May1}.  
We use here the normalized cobar construction to compute
the cotorsion product. 
Then Proposition \ref{prop:ev-model} and the explicit model $(E/M_u, \delta)$ 
for $\aut{M}$ mentioned above enable us to deduce that 
$$
d_1(\beta) = - ev^*(\beta) =- \Bigl([\beta\otimes 1_*]1 +
\sum_{i=1}^k\sum_{\lambda=1}^2[\beta\otimes
(t_{i_\lambda})_*]t_{i_\lambda}\Bigr) = 0.  
$$
For dimensional reasons, it is readily seen that the element 
$\beta \in \ \! _{EM}E_2^{0,2}$ is a permanent cycle. 
The induced map $i^*$ fits into the
following commutative diagram; see \cite[Corollary 3.11]{G-M}. 
$$
\newdir{ >}{{}*!/-10pt/@{>}}
\xymatrix@C20pt@R15pt{
H^*(M_{\aut{M}}) \ar@{->>}[d] \ar[rr]^{i^*}   & & H^*(M) \\
_{EM}E_\infty^{0,*} \ar@{ >->}[r] & \cdots  \ar@{ >->}[r] & 
_{EM}E_2^{0.*}=\text{Cotor}_{H^*(\aut{M})}^{0,*}(\K, H^*(M)).  \ar@{>->}[u]
}
\leqno{(3.1)} 
$$
We see that the map $i^*$ factors through  the edge homomorphism of the
spectral sequence $\{_{EM}E_r, d_r\}$. 
This implies that there is an element $\beta' \in H^2(M_{\aut{M}})$ such
that $i^*(\beta') = \beta$. 
Moreover the map $i^*$ also coincides with the edge
homomorphism 
$$
\xymatrix@C20pt@R10pt{
H^*(M_{\aut{M}}) \ar@{->>}[r] & _\K \widetilde{E}_\infty^{0,*} \ 
\ar@{>->}[r] & \cdots \  \ar@{>->}[r] & 
_\K \widetilde{E}_2^{0.*} \cong H^*(M)
}
$$
of the Leray-Serre spectral sequence. 
Thus $\beta \in \! \ _\K\widetilde{E}_2^{0,2}$ is a permanent cycle and hence 
so is $\beta \in \ \! _\K E_2^{0,2}$ for the naturality of the Leray-Serre
spectral sequence. We have the result.  
\hfill\qed
\medskip

Let $\{_{EM}\widehat{E}_r, \widehat{d}_r\}$ be 
the Eilenberg-Moore spectral sequence with 
$$
_{EM}\widehat{E}_2^{*,*}\cong \text{Cotor}_{H^*(\aut{M})}^{*,*}(\K, \K)
$$
converging to $H^*(B\aut{M})$
which is described in Proposition \ref{prop:G-M}. 
Since the elements $t_{i_1}\otimes 1_*$ and $t_{i_2}\otimes 1_*$ are
primitive in $H^1(\aut{M})$, it follows that 
$[t_{i_1}\otimes 1_*]$ and $[t_{i_2}\otimes 1_*]$ survive at the
$E_2$-term. 
Moreover, by dimensional reasons, the elements 
$[t_{i_1}\otimes 1_*]$ and $[t_{i_2}\otimes 1_*]$ in 
$_{EM}\widehat{E}_2^{*,*}$ are non-exact permanent cycles and 
linearly independent.
Thus we see that 
$\K\{[t_{i_1}\otimes 1_*], [t_{i_2}\otimes 1_*]
 ; 1\leq i \leq k\}$ is a subspace of 
$H^2(B\aut{M})$. 

We provide a lemma to prove Proposition \ref{prop:B}.

\begin{lem}
\label{lem:kappa}
$\text{\em Im} \ \kappa = \K\{[t_{i_1}\otimes 1_*], [t_{i_2}\otimes 1_*]
 ; 1\leq i \leq k\}.$  
\end{lem}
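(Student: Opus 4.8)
The plan is to evaluate the detective map $\kappa$ on the standard basis of $H^1({\mathbb T}^{2k})$ by tracing each generator $s_{i_\lambda}$ through the three maps $p_M^*$, $ev^*$ and $\tau$ that constitute $\kappa$, and to show that the image of $s_{i_\lambda}$ is exactly the class $[t_{i_\lambda}\otimes 1_*]$. Since the $s_{i_\lambda}$ ($\lambda=1,2$, $1\le i\le k$) form a basis of $H^1({\mathbb T}^{2k})$, this at once identifies $\text{Im}\,\kappa$ with the span $\K\{[t_{i_1}\otimes 1_*],[t_{i_2}\otimes 1_*]\}$; the linear independence of these classes has already been recorded just above, so the two sides coincide.

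First I would compute $ev^*\circ p_M^*$. By the choice of $p_M$ we have $p_M^*(s_{i_\lambda})=t_{i_\lambda}\in H^1(M)$, so it suffices to determine $ev^*(t_{i_\lambda})$, where $ev:\aut{M}\to M$ is evaluation at the basepoint. This basepoint evaluation is the restriction of the full evaluation $\F(M,M;id_M)\times M\to M$ to $\aut{M}\times\{\ast\}$, so on models it is the map $m(ev)$ of Proposition \ref{prop:ev-model} followed by the augmentation $B\to\K$ picking out the coefficient of $1\in B$. In the formula for $m(ev)$ the term indexed by $1\in B$ carries the sign $(-1)^{\tau(0)}=+1$, so this coefficient is $\pi\circ r(t_{i_\lambda}\otimes 1_*)$; since $t_{i_\lambda}\otimes 1_*=w_{i0}$ is one of the free generators $w_{ij}$ of $E$ by (2.2) and $r$ is the identity on these, it equals the class of $t_{i_\lambda}\otimes 1_*$ in $(E/M_u)^1\cong H^1(\aut{M})$. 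Hence $ev^*\circ p_M^*(s_{i_\lambda})$ is represented by the cocycle $t_{i_\lambda}\otimes 1_*$.

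Next I would apply the transgression $\tau$. The elements $t_{i_\lambda}\otimes 1_*$ are primitive in $H^1(\aut{M})$, as noted above, so under the bar map $\sigma$ of Proposition \ref{prop:G-M} the class $\sigma(t_{i_\lambda}\otimes 1_*)=[t_{i_\lambda}\otimes 1_*]\in {}_{EM}\widehat{E}_1^{1,*}$ is a permanent cycle. Proposition \ref{prop:G-M} identifies the cohomology suspension $\sigma^*$ with the edge homomorphism of $\{{}_{EM}\widehat{E}_r,\widehat{d}_r\}$ precomposed with $\sigma$; as the transgression $\tau$ is the additive relation inverse to $\sigma^*$, it follows that $\tau(t_{i_\lambda}\otimes 1_*)$ is the corresponding class $[t_{i_\lambda}\otimes 1_*]\in H^2(B\aut{M})$. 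Combining the two steps gives $\kappa(s_{i_\lambda})=[t_{i_\lambda}\otimes 1_*]$, which proves the asserted equality.

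The main obstacle is the bookkeeping around the two spectral-sequence identifications: first, that the basepoint evaluation genuinely corresponds to the $1$-component $\pi\circ r(-\otimes 1_*)$ of the model $m(ev)$; and second, that the transgression of a primitive degree-one class is literally its bar-class in the Eilenberg-Moore spectral sequence computing $H^*(B\aut{M})$. Both rest on the explicit description of $E/M_u$ obtained for the ${\mathbb T}^{2k}$-separable model together with Propositions \ref{prop:ev-model} and \ref{prop:G-M}; once these identifications are in place, surjectivity onto the stated span combined with the already-proven linear independence finishes the argument with no further computation.
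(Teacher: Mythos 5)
Your proof is correct and takes essentially the same route as the paper: compute $ev^*\circ p_M^*(s_{i_\lambda})=t_{i_\lambda}\otimes 1_*$ from the evaluation-map model of Proposition \ref{prop:ev-model}, then use Proposition \ref{prop:G-M} together with the primitivity/permanent-cycle facts recorded just before the lemma to identify $\tau(t_{i_\lambda}\otimes 1_*)$ with the bar class $[t_{i_\lambda}\otimes 1_*]\in H^2(B\aut{M})$. The one slip is your parenthetical claim that $(E/M_u)^1\cong H^1(\aut{M})$, which is false in general (e.g.\ $y\otimes \eta^{\vee}((\eta\beta^m)_*)$ is a degree-one generator that is not a cocycle, so $H^1(\aut{M})=\ker\delta$ is a proper subspace of $(E/M_u)^1$), but this is harmless since your argument only needs that the cocycle $t_{i_\lambda}\otimes 1_*$ represents a class in $H^1(\aut{M})$, which is true.
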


\begin{proof}
By virtue of Proposition \ref{prop:ev-model}, we see that 
$$ev^*\circ (p_M)^*(t_i) = t_i\otimes 1_* \in
 H^*(\aut{M})=H^*(E/M_u,  \delta). 
$$
By definition, the transgression  
$\tau :  H^1(\aut{M}) \to H^2(B\aut{M})$ 
is the additive relation 
$\xymatrix@C18pt{
H^1(\aut{M}) \ar[r]^(0.35){\delta}_(0.35){\cong}  &
 H^2(E\aut{M}, \aut{M}) & H^2(B\aut{M}) 
\ar[l]_(0.4){\pi^*}
}
$. 
 The result follows from Proposition \ref{prop:G-M}. 
\end{proof}

%\begin{lem}
%The element $\omega \in E_2^{0, 2}$ is a permanent cycle if and only if
% the image of 
%the vector space $\K\{[t_1\otimes 1_*], [t_2\otimes 1_*]\}$ by
%$H^*(f)$ is trivial.  
%\end{lem}

\medskip
\noindent
{\it Proof of Proposition \ref{prop:B}}.
Recall the Eilenberg-Moore spectral sequence $\{_{EM}E_r, d_r\}$
mentioned above. Then we see that $d_1(t_{i_\lambda})= - ev^*(t_{i_\lambda})
=-  [t_{i_\lambda}\otimes 1_*]1 \neq 0$ in 
$_{EM}E_1^{1,*}$. Thus the diagram (3.1) implies that $(\text{Im} \ i^*)^1=0$. 
Moreover the naturality of the Eilenberg-Moore spectral sequence allows
us to conclude that 
$$\pi^*(\K\{[t_{i_1}\otimes 1_*], [t_{i_2}\otimes 1_*] ; 1\leq i\leq
k\})=0, 
$$
where $\pi : M_{\aut{M}} \to B\aut{M}$ is 
the projection of the universal $M$-fibration.  
Then it follows that in the Leray-Serre spectral sequence $\{_\K \widetilde{E}_r, \widetilde{d}_r\}$, 
the elements $\widetilde{d}_2(t_{i_1})$ and $\widetilde{d}_2(t_{i_2})$ for 
$1\leq i\leq k$ are linearly independent and that 
$$\K\{\widetilde{d}_2(t_{i_1}), \widetilde{d}_2(t_{i_2}) ; 1\leq i\leq k\} 
= \K\{[t_{i_1}\otimes 1_*], [t_{i_2}\otimes 1_*]; 1\leq i\leq k\}.
$$ 
Therefore it is immediate that $d_2(t_{i_1}t_{i_2})=0$ in $_\K E_2^{0,2}$ if 
$H^*(f)\circ \kappa= 0$. 

Suppose that $d_2(t_{i_1}t_{i_2})=0$ for any $i$. 
Then we have 
$$0=d_2(t_{i_1}t_{i_2})
= H^*(f)(\widetilde{d}_2(t_{i_1}))\otimes t_{i_2} 
 - H^*(f)(\widetilde{d}_2(t_{i_2}))\otimes t_{i_1}.
$$    
It follows from Lemma \ref{lem:kappa} that $H^*(f)\circ \kappa$ is trivial. 
This completes the proof. 
\hfill\qed

\medskip
Theorem \ref{thm:kappa} is the key to proving Propositions \ref{prop:more_ex1} and \ref{prop:more_ex2}. 

\medskip
\noindent
{\it Proof of Proposition \ref{prop:more_ex1} .} We recall the product on 
${\mathcal E}M{\mathcal W}(B)$. For any classes $[{\mathcal F}]$ and $[{\mathcal F}']$ in ${\mathcal E}M{\mathcal W}(B)$, 
we see that $[{\mathcal F}]\ast [{\mathcal F}'] =[{\mathcal F}_{f\ast f'}]$, where $f$ and $f' : B \to B\aut{M}$ are classifying maps 
for $[{\mathcal F}]$ and $[{\mathcal F}']$, respectively. 
The induced map 
$$
H_*(f\ast f') : \widetilde{H}_*(B) \stackrel{\Delta_*}{\longrightarrow}  \widetilde{H}_*(B)\oplus \widetilde{H}_*(B) 
\stackrel{H_*(\nabla\circ f\vee f')}{\longrightarrow}  \widetilde{H}_*(B\aut{M})
$$
coincides with $H_*(f) + H_*(f')$ since $\Delta_*(a) = (a, a)$ for $a \in \widetilde{H}_*(B)$. 
Thus it follows that $H^*(f\ast f') = H^*(f) + H^*(f')$ in the cohomology.  Suppose that $[{\mathcal F}]$ and $[{\mathcal F}'] $ are in 
${\mathcal M}_{(M, \omega)}^{ex}(B)$. Then Theorem \ref{thm:kappa} yields that $H^*(f \ast f')\circ \kappa= 
(H^*(f) + H^*(f'))\circ \kappa = 0$. 
By using Thereom \ref{thm:kappa} again, we conclude that 
$[{\mathcal F}_{f\ast f'}]$ is in ${\mathcal M}_{(M, \omega)}^{ex}(B)$.
\hfill\qed

%\medskip
%Let $(\wedge V, d)$ be a minimal model for a simply-connected space $X$. 
%Before proving Proposition \ref{prop:more_ex2}, 
%we recall the isomorphism 
%$\nu : V \to \text{Hom}_{\mathbb Z}(\pi_*(X), \K)$ defined by 
%$$\nu(v)([\alpha])e^n=Q(\alpha)v,
%$$ 
%where $e^n$ is the base of $H^n(S^n)$ and $Q(\alpha)$ denotes the indecomposable part 
%of a model for a map $\alpha : S^n \to X$; see \cite[Theorem 2]{B-G} and \cite[Theorem 15.11]{F-H-T}.  
%Observe that $\nu$ is natural with respect to continuous maps. 

\medskip
\noindent
{\it Proof of Proposition \ref{prop:more_ex2}.} We first recall that the suspension space $B$ is formal; see 
\cite[Theorem 13.9]{F-H-T}.   
Let $(\wedge \widetilde{W}, d)$ be a minimal model for $B\aut{M}$.   
By virtue of the Sullivan-de Rham equivalence theorem \cite{B-G}, we have a well-defined bijection 
$$
\Theta : [B, (B\aut{M})_\K] \to \text{Hom}_{DGA}( (\wedge \widetilde{W}, d), (H^*(B), 0))/\simeq , 
$$
which sends a class $[f]$ to the homotopy class of a Sullivan representative for $f$.  
%Here  $\simeq$ denotes the homotopy relation between morphisms of DGA's. 

We shall show the former half of the assertions. Assume that $H^*(B\aut{M})$ is a free algebra. 
Then it is readily seen that  $(\wedge \widetilde{W}, d)= (\wedge W, 0)$.  
The result \cite[Lemma 14.5]{F-H-T} yields that 
if $\varphi_0$ and $\varphi_1$ are  morphisms of DGA's from a Sullivan algebra with $\varphi_0 \simeq \varphi_1$, then 
$\varphi_0$ is chain homotopic to $\varphi_1$.  
Thus it follows that the homotopy relation $\simeq$, which we consider here,  
is nothing but the equal relation.  Hence we see that the restriction gives rise to an isomorphism 
$$
\Theta_2: \text{Hom}_{DGA}( (\wedge W, 0), (H^*(B), 0))/\simeq \ \stackrel{\cong}{\longrightarrow} 
\text{Hom}_{GV}( W, H^*(B)). 
$$
Observe that the bijection 
$\Theta_2\circ \Theta$ sends a class $[f]$ to the restriction of the induced map $H(f) : H^*(B\aut{M})=\wedge W \to H^*(B)$ to 
the subspace $W$.

Recall the natural bijection $\theta : [B, (B\aut{M})_\K]_* \to [B, (B\aut{M})_\K]$. In view of the additive structures on 
$[B, (B\aut{M})_\K]_*$ and  $\text{Hom}_{GV}( W, H^*(B))$, we can deduce that the composite  
$
\Theta_2\circ \Theta \circ \theta :  [B, (B\aut{M})_\K]_*  \to \text{Hom}_{GV}( W, H^*(B))
$
is a morphism of abelian groups. Thus $[B, (B\aut{M})_\K]_*$ admits  a structure of vector space which respects 
the additive structure of the abelian group. In consequence,  we see that  $\Theta_2\circ \Theta \circ \theta$ is an 
isomorphism of vector spaces. 
By the same way,  $[B, (B\aut{M})_\K]$ is equipped with a structure of vector space so that  $\theta$ is an isomorphism.  

Let $l : B\aut{M} \to (B\aut{M})_\K$ be the rationalization map.
To obtain the first isomorphism, it suffices to show that 
the induced map 
$$
l_*  :  [B, (B\aut{M})] \to  [B, (B\aut{M})_\K]
$$  gives rise to 
isomorphism from $[B, (B\aut{M})]\otimes \K$ to  $[B, (B\aut{M})_\K]$ of vector spaces 
because $[B, B\aut{M}] \cong {\mathcal E}M{\mathcal W}(B)$ as an abelian group.

By assumption we see that $B=S^2\wedge B'$ for some finite CW complex  $B'$.  
Thus we have a commutative diagram 
$$
\xymatrix@C25pt@R15pt{
\pi_2(map_*(B', B\aut{M})) \ar[d]_{\pi_2(\widetilde{l})} &  [B, B\aut{M}]_*  \ar[l]_(0.4){ad}^(0.4){\cong} 
\ar[d]^{l_*}  \ar[r]^{\theta}_{\cong} & [B, B\aut{M}]  \ar[d]_{l_*} \\
\pi_2(map_*(B', (B\aut{M})_\K)) &  [B, (B\aut{M})_\K]_* \ar[l]_(0.4){ad}^(0.4){\cong} 
  \ar[r]^{\theta}_{\cong} & [B, (B\aut{M})_\K]  }
$$
in which the lower sequence is an isomorphism of vector spaces and the upper sequence is an isomorphism of abelian groups. 
Here $ad$ denotes the adjoint map and 
$
\widetilde{l} : map_*(B', B\aut{M}) \to map_*(B', (B\aut{M})_\K)
$ 
is the map between the spaces of based maps induced by the rationalization map $l$. 

The result \cite[II Theorem 3.11]{H-M-R} yields 
that the restriction of the map $\widetilde{l}$ to each connected component is the rationalization. 
Thus we see that $\pi_2(\widetilde{l})\otimes \K$ 
is an isomorphism and hence ${\mathcal E}M{\mathcal W}(B)\otimes \K \cong 
\text{Hom}_{GV}( W, H^*(B))$ as a 
vector space. 

Suppose that $B=S^2$. Since $B\aut{M}$ is simply-connected, it follows that $\widetilde{W}^1=0$ and $\widetilde{W}^2=W^2$.  
Then $dv=0$ for any $v\in \widetilde{W}^2$. Therefore we see that the restriction gives rise to an isomorphism
$$
\Theta_2: \text{Hom}_{DGA}( (\wedge \widetilde{W}, d), (H^*(B), 0))/\simeq \ \stackrel{\cong}{\longrightarrow} 
\text{Hom}_{GV}( \widetilde{W}, H^*(B))=\text{Hom}_\K(W^2, \K). 
$$
By the same argument as above, we have ${\mathcal E}M{\mathcal W}(S^2) \otimes \K\cong 
\text{Hom}_{\K}( W^2, \K)$.

Suppose  that $(M, \omega)$ is a nilpotent ${\mathbb T}^{2k}$-separable symplectic manifold and $B=S^2$. 
The second isomorphism can be extracted from the first one and Theorem \ref{thm:kappa}.  
It follows from Lemma  \ref{lem:kappa} that 
$\dim \text{Im} \kappa = 2k$. This allows us to obtain the equality on the dimension.   
\hfill\qed

\medskip
\noindent
{\it Proof of Corollary \ref{cor:more_ex3}.} Let $M$ denote the symplectic manifold 
${\mathbb T}^{2k}\times {\mathbb C}P(m)$.  
We recall the basis in $(E/M_u)^1$ described in 
Example \ref{ex:more_ex}. Then we see that 
\begin{eqnarray*}
H^1(\aut{M})= \K\{t_{i_1}\otimes 1_*, t_{i_2}\otimes 1_*,  
y\otimes \eta^{\vee}((\eta \beta^{m-s}t_{j_1}\cdots t_{j_{2s}})_*) ;  \\
&& \hspace{-7cm} j_u\neq j_v \ \text{if} \ u\neq v, 1\leq s\leq \min\{m, k\}, 1\leq i_1, i_2 \leq k  
\}.
\end{eqnarray*} 
Moreover we have $H^1(\aut{M})\cong H^2(B\aut{M})$. Then 
the result follows from an appeal to the equality in Proposition \ref{prop:more_ex2}. 
\hfill\qed

\medskip

\begin{ex} 
\label{ex:torus} 
We present here an application of  
Corollary \ref{cor:reduction}.

Let $(M\times {\mathbb T}^2, \omega + dt_1\wedge dt_2)$ be the symplectic 
 manifold for which  $dt_1\wedge dt_2$ is the standard symplectic form on
 ${\mathbb T}^2$. 
We define a monoid map $\ell : \aut{S^1} \to \aut{M\times {\mathbb T}^2}$ by 
$\ell(\gamma) = id_M\times id_{\mathbb T} \times \gamma$. 
Let $h: S^1 \to \aut{M\times {\mathbb T}^2}$ be the composite of 
$\ell$ and the monoid map $S^1 \to \aut{S^1}$ arising from 
the multiplication of the circle.  
Then $h$ factors through 
$\text{Symp}_1(M\times {\mathbb T}^2, \omega + dt_1\wedge dt_2))$.  
Let us consider the following commutative diagram 
$$\xymatrix@C20pt@R15pt{
H^1({\mathbb T}^2) \ar[r]^(0.45){p_{M\times {\mathbb T}^2}^*} \ar[rrd]_{\kappa}
 & H^1(M\times {\mathbb T}^2) 
\ar[r]^(0.45){ev^*} & H^1(\aut{M\times {\mathbb T}^2}) \ar[d]_{\tau} 
\ar[r]^(0.6){h^*} & H^1(S^1) \ar[d]^{\tau}  \\ 
 & & H^2(B\aut{M\times {\mathbb T}^2}) \ar[r]_(0.6){(B\eta)^*} &
 H^2(BS^1),  
}
$$
where $p_{M\times {\mathbb T}^2} : M\times {\mathbb T}^2 \to  {\mathbb T}^2$ 
is the projection onto the second factor.
Since $(p_{M\times {\mathbb T}^2}\circ ev\circ h)(t)= (0, t)$, 
it follows that 
$$
(p_M\circ ev \circ h)_* : \pi_1(S^1)_\K \to 
\pi_1(\aut{M\times {\mathbb T}^2})_\K \to \pi_1({\mathbb T}^2)_\K
$$ 
is injective. This yields that 
$\text{Im} \ ((Bh)^*\circ \kappa)= H^*(BS^1)$.  

For a simply-connected space $B$, 
we choose a class $a \in H^2(B; {\mathbb Z})\cong [B, BS^1]$. 
Let us consider the symplectic bundle  
$(M\times {\mathbb T}^2, \omega + dt_1\wedge dt_2) \to P \to B$ 
with the classifying map $(Bh)\circ a : B \to BS^1 \to 
B\text{Symp}_1(M\times {\mathbb T}^2, \omega + dt_1\wedge dt_2))$.  
In view of Corollary \ref{cor:reduction},  we see that 
the bundle is Hamiltonian if and only if $a=0$.  
\end{ex}

\begin{rem}
\label{rem:S^2}
Corollary \ref{cor:symp} is refined in the
 case of a fibration over $S^2$ with fiber ${\mathbb T}^2$. 
In fact, it follows that a symplectic ${\mathbb T}^2$-bundle $\xi$ 
over $S^2$ possesses a compatible symplectic structure if and only if 
 $\xi$ is trivial (compare Example \ref{ex:Rationalized fibration}). 
To see this, we recall the result due to Earle and Eells which asserts
 that the natural inclusion 
$i : {\mathbb T}^2 \to \text{Diff}_1({\mathbb T}^2)$ 
is a homotopy equivalence; see also \cite{Gr}. 
Let $g : S^2 \to B\text{Diff}_1({\mathbb T}^2)$ be the classifying map
 of the given ${\mathbb T}^2$-bundle $\xi$.  Then homotopy class $[g]$
 is identified with the element 
$(\psi^*[t_1], \psi^*[t_2])$
via the bijection 
$$
[S^2, B\text{Diff}_1({\mathbb T}^2)] \ 
\mapleftud{(Bi)_*}{\approx} \ 
[S^2, B{\mathbb T}^2]\cong H^2(S^2; {\mathbb Z}^{\oplus 2}),
$$ 
where $[t_1]$ and $[t_1]$ are generators of 
$H^2(B{\mathbb T}^2; {\mathbb Z})$ and $\psi : S^2 \to B{\mathbb T}^2$ 
denotes the map determined uniquely by $g$ 
up to homotopy under the bijection $(Bi)_*$. 

We prove the ``only if'' part. By virtue of Theorem \ref{thm:kappa}, we
 see that $g^*\circ \kappa$ is trivial. Let us consider the following
 commutative diagram.   
$$\xymatrix@C20pt@R15pt{
H^2(S^2) & H^2(B\aut{{\mathbb T}^2}) \ar[d] \ar[l]_{g^*} 
   &  H^1(\aut{{\mathbb T}^2}) 
    \ar[l]_{\tau} \ar@<1ex>[dd]^{i^*} \\
 & H^2(B\text{Diff}_1{{\mathbb T}^2}) \ar[d]_{\cong}^{(Bi)^*} 
   \ar[ul]^{g^*} &  \\
 & H^2(B{\mathbb T}^2) \ar@/^/[uul]^{\psi^*} &  
        H^1({\mathbb T}^2) \ar[l]_{\tau} 
\ar@<1ex>[uu]^{ev^*}
}
$$
Observe that $i^*\circ ev^*= id$.  
Since $H^2(S^2; {\mathbb Z})$ is torsion free, it follows  
from the definition of the detective map $\kappa$ that 
$(\psi^*[t_1], \psi^*[t_2])=(0, 0)$.   
We have the result.
The converse is immediate.  
\end{rem}

%\begin{ex}
%Let $\xi$ be a principal ${\mathbb T}^{2}$ bundle ${\mathbb T}^{2} \to E \to M$ over a closed oriented manifold $M$.  
%Then the class of the standard symplectic form on  ${\mathbb T}^{2}$ is extendable to a class of 
%$H^2(E; {\mathbb R})$ if and only if $\xi$ is trivial; see \cite[Remark 4.13]{W}.
%\end{ex}

\begin{ex} (cf. \cite[Remark 4.13]{W}) 
\label{ex:principal-bd}
Let $\xi$ be a principal ${\mathbb T}^{2}$ bundle over a
 simply-connected symplectic manifold whose second integral 
cohomology is torsion free. The same argument as in Remark \ref{rem:S^2}
 enables us to deduce that $\xi$ admits a compatible symplectic
 structure if and only if the bundle is trivial. 
\end{ex}

\section{A fibration whose fiber is a nilmanifold} 
%\section{Non ${\mathbb T}^{2k}$-separable cases}

\noindent
{\it Proof of Theorem \ref{thm:nil-ham}.}
The ``if'' part is immediate; see Remark \ref{rem:coupling}. 
In order to prove the ''only if'' part, we
 choose a minimal model $(\wedge Z, d)$ for $N$ so that 
$\wedge Z = \wedge (x_1, ...., x_n)$ and 
$d(x_i) \in \wedge (x_1, ..., x_{i-1})$, where $\deg x_i = 1$; 
see \cite{Hase}. 
By using $(\wedge Z, d)$ as a model for the source space  $N$ and taking
the same model for the target space, we construct a Sullivan model 
$E/M_u$ for ${\mathcal G}:=\aut{N}$; see Section 2. 
Observe that there exists a surjective map 
$\wedge (x_1\otimes 1_*, ...., x_n\otimes 1_*) \to E/M_u$. 
We use the same notation as $x_1\otimes 1_*$ for 
its image of the surjection.  
Choose a basis 
$\{x_{i_1}\otimes 1_*, ..., x_{i_k}\otimes 1_* \}$ 
of $(E/M_u)^1$. We then have 
$$
(E/M_u, \delta) \cong 
(\wedge(x_{i_1}\otimes 1_*, ..., x_{i_k}\otimes 1_*),
 \delta).
$$  
Since the degree of the element $x_{i_j}\otimes 1_*$ is odd for any $i_j$, 
it follows that $(E/M_u, \delta)$ is minimal. 
This implies that $\delta \equiv 0$ because $\aut{M}$ is a Hopf space. 

We describe here generators of $H^*(B{\mathcal G})$.
Let us consider the Eilenberg-Moore spectral sequence $\{E_r, d_r\}$ 
converging to $H^*(B{\mathcal G})$. 
It is immediate that each $x_{i_j}\otimes 1_*$ is primitive. 
Thus we see that 
$$
E_2^{*,*}\cong \text{Cotor}_{H^*({\mathcal G})}(\K, \K)\cong \K\bigl[ 
[x_{i_1}\otimes 1_*], ..., [x_{i_k}\otimes 1_*] \bigr],  
$$ 
where $\deg [x_{i_l}\otimes 1_*]=2$ for any $1\leq l\leq k$. 
For dimensional reasons, we have 
$$
H^*(B{\mathcal G})\cong \K\bigl[ 
[x_{i_1}\otimes 1_*], ..., [x_{i_k}\otimes 1_*] \bigr]. 
$$
%
%Suppose that  
% $(f)^*[x_{i_s}\otimes 1_*]\neq 0$ for some $s$.  

Let $X$ be a nilpotent space with a minimal model $(\wedge V, d)$.   
Let $\pi^n(\wedge V)$ denote the cohomology $H^*(Q(\wedge V), d_0)$, where 
$d_0$ denotes the linear part of the differential $d$.  
By virtue of \cite[12.7 Theorem]{B-G}, we have a natural bijection 
$$
\widetilde{\nu} : \pi_n(X)_\K \to 
\text{Hom}(\pi^n(\wedge V), \K)=\pi^n(\wedge V)^\vee  
$$
for $n \geq 1$, which is an isomorphism for $n \geq 2$. 
Here $\pi_*(X)_\K$ denotes the rationalization of $\pi_*(X)$ while $\pi_n(X)_\K=\pi_n(X)\otimes \K$ for 
$n\geq 2$.  
The proof of \cite[Theorem 2]{G} enables us to obtain the following
 commutative diagram 
$$
\xymatrix@C30pt@R15pt{
\pi_2(B{\mathcal G}) \ar[r]^{\partial'}_{\cong} & \pi_1({\mathcal G}) 
  \ar[d]^{ev_*}\\
 \pi_2(B) \ar[u]^{f_*} \ar[r]_{\partial} & \pi_1(N),  
}
$$
where  $\partial'$ is the connecting homomorphism of 
the universal fibration ${\mathcal G} \to E{\mathcal G} \to B{\mathcal
 G}$. 
Thus we obtain a commutative diagram 
$$
\xymatrix@C20pt@R15pt{
\pi_2(B{\mathcal G})_\K \ar[r]^{\partial'_\K}_{\cong}  &  
       \pi_1({\mathcal G})_\K   \ar[d]^{ev_*}
           \ar[r]_(0.4){\cong}^(0.4){\widetilde{\nu}} & 
 (\pi^1(E/M_u))^\vee  \ar[d]^{m(ev)^\vee} \\ 
 \pi_2(B)_\K \ar[u]^{f_*} \ar[r]_{\partial_\K}  
& \pi_1(N)_\K \ar[r]_(0.4){\cong}^(0.4){\widetilde{\nu}} & 
(\pi^1(\wedge Z))^\vee.    
}
$$

Suppose that $[\omega]\in H^2(M; {\mathbb R})$ is extendable and 
$f^* : H^*(B{\mathcal G})\to H^*(B)$ is non-trivial.  
By virtue of Proposition \ref{prop:ev-model}, we see that 
$m(ev)(x_{i_s})=x_{i_s}\otimes 1_*$. It follows that 
$m(ev)^\vee : \pi^1(E/M_u)^\vee \to \pi^1(\wedge Z)^\vee$ 
is injective. As considered above, 
the generators of $H^*(B{\mathcal G})$ are concentrated in 
$H^2(B{\mathcal G})$. Thus we see that the map 
$H_*(f) : H_2(B) \to H_2(B{\mathcal G})$ is non-trivial and hence 
so is $f_* : \pi_2(B)_\K \to \pi_2(B{\mathcal G})_\K$. 
This yields that the composite of the lower sequence of maps 
in the diagram above is also non-trivial.

On the other hand, since $[\omega]$ extends to a class of the cohomology of
$P$, it follows from the proof of \cite[Theorem 1.3]{St} that 
the rationalization of the fibration $N\to P \to B$ is a trivial one. 
Observe that $P$ is a nilpotent space; see Example
\ref{ex:T-separable}. 
This allows us to conclude that the connecting homomorphism 
$\partial_\K : \pi_2(B)_\K \to \pi_1(N)_\K$ is trivial, 
which is a contradiction. We have the result.        
\hfill\qed

\begin{rem}
\label{rem:Ham}
We mention that $\pi_1(\text{Ham}({\mathbb T}^2))=0$; see
 \cite[7.2]{P}. It follows from the proof of Theorem \ref{thm:nil-ham}
 that $(QH^*(B\aut{N}))^i=0$ for $i > 2$ if $N$ is a nilmanifold. Therefore
 Theorem \ref{thm:Ham} follows immediately from these results 
 in the case of the $2$-dimensional torus. 
\end{rem}

\medskip
\noindent
{\it Acknowledgments.}
I am particularly grateful to Kojin Abe 
for many valuable comments on this work. 
I thank Andr\'es Pedroza for conversation
on the group of Hamiltonian diffeomorphisms.  
I also thank the referee for comments to revise a previous version of this paper.

\section{Appendix} 

So for we assume that the fibration we deal with has the nilpotent
symplectic fibre. 
Nevertheless, some important symplectic manifold is not nilpotent; 
see Example \ref{ex:non-nil} below. 
We rephrase Theorem \ref{thm:kappa} in such a case.

Let ${\mathcal F} : M  \stackrel{i}{\to} P \stackrel{p}{\to} B$ be a
fibration over a simply-connected space and 
$m : (\wedge V, d) \stackrel{\simeq}{\to} A_{PL}(B)$ be a minimal model
for $B$. Then we have a minimal Koszul-Sullivan model   
$$
\xymatrix@=15pt{
A_{PL}(M) & A_{PL}(P) \ar[l]_{A_{PL}(i)} & \ar[l]_{A_{PL}(p)} A_{PL}(B) \\
(\wedge W, \overline{d}) \ar[u]_{\simeq}^{\overline{m}} &
 (\wedge V \otimes \wedge W, \widehat{d}) \ar[u]_{\simeq}^{\widehat{m}}
\ar@{->>}[l]_(0.57){\pi} & \ar@{>->}[l]_(0.38){j} \  (\wedge V, d)
\ar[u]_{\simeq}^{m} ,
}
$$
for ${\mathcal F}$ in which vertical arrows are quasi-isomorphisms,
$j$ is a Sullivan model for $p$ and $\pi$ is the natural projection; 
see \cite{Hal1}, \cite{Hal2}.  Moreover  the result \cite[Proposition 17.9]{F-H-T} 
allows us
to obtain a fibration ${\mathcal F}_{(\K)}$ of the form 
$$
\xymatrix@=15pt{
M_{(\K)}:=|(\wedge W, \overline{d})| \ar[r]_(0.5){|\pi|} &  
 |(\wedge V \otimes \wedge W, \widehat{d})|  \ar[r]_(0.6){|i|} &
|(\wedge V, d)|, 
}
$$ 
which is referred to as the {\it rationalized fibration} of  
${\mathcal F}$. 
Here $|A|$ denotes the spatial realization of a DGA $A$.   
Observe that $|(\wedge V, d)|$ is the rationalization $B_\K$ 
since by assumption $B$ is simply-connected.  

For a simplicial set $K$ and a DGA $A$, we define a map 
$$
\eta : \text{DGA}(A, \Omega K) \stackrel{}{\to} 
\text{Simp}(K, \Delta A)
$$
by $\eta(\phi)=f; f(\sigma)(a)= \phi(a)(\sigma)$ for $a \in A$ and
$\sigma \in K$. 
Recall that $\eta$ is bijective and that it gives rise to the adjunction 
$\psi_A : A \to \Omega \Delta A$ which is a quasi-isomorphism if $A$ is a
Sullivan algebra; see \cite[10.1 Theorem (ii)]{B-G}.
Let $\alpha : (\wedge V, d) \stackrel{\simeq}{\to} A_{PL}(X)$ be a
Sullivan model for $X$.  
The naturality of $\eta$ enables us to conclude that 
$\Omega \eta(\alpha)\circ \psi_{\wedge V}= \alpha$. 
This implies that  
$|\eta(\alpha)| : X\cong |S_*(X)| \to |(\wedge V, d)|$ 
is an isomorphism on the rational cohomology; 
see Section 2 for the notations.
%By the minimality of the model for ${\mathcal F}$, the DGA's 
%$(\wedge V \otimes \wedge W, \widehat{d})$ is a Sullivan model.  

Assume that $M$ is a ${\mathbb T}^{2k}$-separable symplectic manifold 
which is not necessarily nilpotent. 
As in the introduction, we define a map 
$p_M' : M_{(\K)} \to {\mathbb T}^{2k}_{\K}$. 
Moreover define a map $\kappa' : H^1({\mathbb T}^{2k}) \to 
H^2(B\aut{M_{(\K)}})$ by composite  
$$
\xymatrix@C15pt{
 H^1({\mathbb T}^{2k}) \ar[r]^{(p_M')^*} & H^1(M_{(\K)}) 
\ar[r]^(0.4){ev^*} & H^1(\aut{M_{(\K)}}) \ar[r]^(0.45){\tau} & 
H^2(B\aut{M_{(\K)}})
}
$$
as in Definition \ref{defn:kappa}. 
We present a variant of Theorem \ref{thm:kappa}.  

\begin{thm} 
\label{thm:kappa'}
 Let ${\mathcal F} : 
(M, \omega)  \to P
 \stackrel{p}{\to}  B$ 
be a  fibration over a simply-connected space $B$ 
with fibre $(M, \omega)$ a ${\mathbb T}^{2k}$-separable symplectic
 manifold not necessarily nilpotent. 
Let $f' : B_\K \to B\aut{M_{(\K)}}$ be the classifying map of
 the rationalized fibration  
${\mathcal F}_{(\K)} : M_{(\K)} \to  P_{(\K)} \to B_\K$ of ${\mathcal F}$.  
Then the class $[\omega] \in H^2(M; {\mathbb R})$ extends to 
a cohomology class of $P$ 
if and only if $H^*(f')\circ \kappa'$ is trivial.  
\end{thm}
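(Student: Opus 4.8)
The plan is to deduce the statement from Theorem \ref{thm:kappa} by transferring the extension problem from ${\mathcal F}$ to its rationalized fibration ${\mathcal F}_{(\K)}$. Although $M$ itself need not be nilpotent, its rationalized fibre $M_{(\K)}=|(\wedge W, \overline{d})|$ is a rational space, hence nilpotent, and it carries the minimal model $(\wedge W,\overline d)$ of $M$; in particular $M_{(\K)}$ is ${\mathbb T}^{2k}$-separable and the data $p_M'$, $\kappa'$ introduced before the statement are precisely the map and the detective map attached to $M_{(\K)}$. I would first record that the proof of Theorem \ref{thm:kappa} is entirely model-theoretic: the symplectic structure of the fibre enters only through the shape of the minimal model (Definition \ref{defn:separable}), through the non-vanishing of $[\omega]^n$ established at the beginning of Section 3 (which persists rationally), and through the construction of $E/M_u$ in Section 2, which uses only that the fibre is nilpotent. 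Hence Theorem \ref{thm:kappa}, together with Propositions \ref{prop:A} and \ref{prop:B}, applies verbatim to ${\mathcal F}_{(\K)} : M_{(\K)} \to P_{(\K)} \to B_\K$, whose base $B_\K=|(\wedge V, d)|$ is simply-connected and whose classifying map is $f'$.

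The heart of the argument is then the reduction: $[\omega]\in H^2(M;{\mathbb R})$ is extendable in ${\mathcal F}$ if and only if the corresponding class is extendable in ${\mathcal F}_{(\K)}$. Since $B$ is simply-connected, the edge homomorphism of the Leray-Serre spectral sequence identifies the image of $i^* : H^2(P;{\mathbb R}) \to H^2(M;{\mathbb R})=E_2^{0,2}$ with the subspace $E_\infty^{0,2}$ of permanent cycles, so extendability of $[\omega]$ is equivalent to $[\omega]$ surviving to $E_\infty$, and likewise for ${\mathcal F}_{(\K)}$. Because ${\mathbb R}$ is flat over $\K$ and all spaces have rational cohomology of finite type, the real spectral sequence of either fibration is its rational spectral sequence tensored with ${\mathbb R}$. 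The canonical map of fibrations ${\mathcal F}\to {\mathcal F}_{(\K)}$ covering the rationalization $B\to B_\K$, obtained from \cite[Proposition 17.9]{F-H-T}, is a rational cohomology isomorphism on fibre, base and total space; it therefore induces an isomorphism on $E_2=H^*(B)\otimes H^*(M)$ and hence on every page of the two rational spectral sequences, carrying $[\omega]$ and the generators $\beta, t_{i_\lambda}$ to their counterparts. Tensoring with ${\mathbb R}$, the two real spectral sequences are isomorphic, so $[\omega]$ survives in one exactly when its counterpart survives in the other.

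Combining these observations, $[\omega]$ is extendable in ${\mathcal F}$ if and only if the corresponding symplectic class is extendable in ${\mathcal F}_{(\K)}$, which by Theorem \ref{thm:kappa} applied to ${\mathcal F}_{(\K)}$ holds if and only if $H^*(f')\circ\kappa'$ is trivial, as claimed. I expect the main obstacle to be the legitimacy of invoking Theorem \ref{thm:kappa} for $M_{(\K)}$, which is not literally a closed symplectic manifold; this is resolved by the model-theoretic remark of the first paragraph, namely that only the minimal-model data of the separable form, the surviving top power, and the nilpotence of the fibre are used in that proof. A secondary point needing care is the bookkeeping in the comparison step: one must verify that the isomorphism of rational spectral sequences matches the permanent cycle $\beta$ of Proposition \ref{prop:A} and the transgressive elements $t_{i_1}, t_{i_2}$ of Proposition \ref{prop:B} across ${\mathcal F}$ and ${\mathcal F}_{(\K)}$, so that the criterion $d_2([\omega])=0$ in the real spectral sequence of ${\mathcal F}$ is faithfully converted into the condition on $H^*(f')\circ\kappa'$.
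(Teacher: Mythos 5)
Your overall route is exactly the paper's: compare ${\mathcal F}$ with its rationalized fibration ${\mathcal F}_{(\K)}$ via the realization diagram coming from \cite[Proposition 17.9]{F-H-T}, whose vertical maps are rational cohomology isomorphisms; transfer extendability of $[\omega]$ across the induced isomorphism of Leray--Serre spectral sequences (tensored with ${\mathbb R}$); and then run the argument of Theorem \ref{thm:kappa} (Propositions \ref{prop:A} and \ref{prop:B}) on ${\mathcal F}_{(\K)}$, whose classifying map is $f'$ and whose detective map is $\kappa'$. The spectral-sequence bookkeeping in your second paragraph is correct and is precisely what the paper leaves implicit.

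The one genuine weak point is your justification for why the Section 2 machinery applies to $M_{(\K)}$. You assert that $M_{(\K)}$, being a rational space, is nilpotent, and that the construction of $E/M_u$ ``uses only that the fibre is nilpotent.'' Neither claim is sound: a rational space need not be nilpotent (the realization of a minimal Sullivan algebra with degree-one generators has, in general, only a pro-nilpotent fundamental group), and, more to the point, hypothesis (2.8) requires the \emph{source} of the function space to be a connected finite CW complex, which $M_{(\K)}=|(\wedge W,\overline{d})|$ certainly is not. So nilpotence of $M_{(\K)}$, even if it held, would not license the Brown--Szczarba model for $\aut{M_{(\K)}}=\F(M_{(\K)},M_{(\K)};id)$. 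What does license it --- and this is the one substantive observation in the paper's proof --- is hypothesis (2.9): the target $M_{(\K)}$ is a rational space and the source has finite-dimensional rational cohomology, since $H^*(M_{(\K)};\K)\cong H^*(M;\K)$ and $M$ is a closed manifold. With (2.9) substituted for your nilpotence claim, the rest of your argument goes through and coincides with the paper's proof; as written, however, the proposal is unjustified at exactly the point where the non-nilpotence of $M$ matters.
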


\begin{proof}
Consider the commutative diagram 
$$
\xymatrix@=15pt{
M \ar[r]^j \ar[d]_{|\eta(\overline{m})|} & P \ar[r]^p 
\ar[d]_{|\eta(\widehat{m})|} & B   \ar[d]_{|\eta(m)|} \\
|(\wedge W, \overline{d})| \ar[r]_(0.4){|\pi|} &  
 |(\wedge V \otimes \wedge W, \widehat{d})|  \ar[r]_(0.6){|i|} &
|(\wedge V, d)|. 
}
$$
%Observe that vertical arrows are isomorphisms on the rational
% cohomology. This follows from the naturality of $\eta$ and the fact
% that  the adjunction $\psi_A$ is a quasi-isomorphism if $A$ is a
% Sullivan algebra.   
Observe that the low sequence is the rationalized fibration 
${\mathcal F}_{(\K)}$ of ${\mathcal F}$.  In our case, the assumption (2.9) is satisfied. 
Therefore by applying the same argument as in the proof of Theorem \ref{thm:kappa} to ${\mathcal F}_{(\K)}$, 
we have the result. 
\end{proof}

\begin{ex}
\label{ex:non-nil}An interesting example 
of a non-nilpotent ${\mathbb T}^2$-separable 
symplectic space appears  
among total spaces of ${\mathbb T}^2$-bundles over  ${\mathbb T}^2$.  
For example, we consider the ${\mathbb T}^2$-bundle 
${\mathbb T}^2 \to M(-I, I, 0, 0) \to {\mathbb T}^2$ whose total space
 is defined by 
$$
 M(-I, I, 0, 0) = {\mathbb T}^2\times {\mathbb R}^2 / \sim,  
$$ 
where 
$$
( \left(
\begin{array}{ccc}
s \\
t \\
\end{array}
\right), 
 \left(
\begin{array}{ccc}
x+1 \\
y \\
\end{array}
\right) ) \sim
( \left(
\begin{array}{ccc}
-s \\
-t \\
\end{array}
\right), 
 \left(
\begin{array}{ccc}
x \\
y \\
\end{array}
\right) ) 
$$ 
and 
$$
( \left(
\begin{array}{ccc}
s \\
t \\
\end{array}
\right), 
 \left(
\begin{array}{ccc}
x \\
y+1 \\
\end{array}
\right) )  \sim
( \left(
\begin{array}{ccc}
s \\
t \\
\end{array}
\right), 
 \left(
\begin{array}{ccc}
x \\
y \\
\end{array}
\right) ). $$
We refer the reader to \cite{S-F} 
for the details and the classification of such 
${\mathbb T}^2$-bundles. Then it follows from the argument in 
\cite[3.2]{Ke} that $M(-I, I, 0, 0)$ has a minimal model of the form 
$$
(\wedge(a, b)\otimes \wedge (\sigma, \tau), d)
$$
for which $da=db=d\sigma=0$, $d\tau = \sigma^2$, 
$\deg a = \deg b = 1$ and  $\deg \sigma = 2$. 
Thus the total space $M(-I, I, 0, 0)$ is ${\mathbb T}^2$-separable. 

We observe that $\pi_1({\mathbb T}^2)$ does not act nilpotently 
$H^*({\mathbb T}^2)$ the cohomology of the fibre of ${\mathcal F}$ and
 that $M(-I, I, 0, 0)$ admits a symplectic structure compatible with the
 fibration; see \cite[3.2]{Ke} for more details.   
Drawing on the representation of the fundamental group of the total
 space of a ${\mathbb T}^2$-bundle over  ${\mathbb T}^2$ 
due to Sakamoto and Fukuhara \cite{S-F}, we see that 
$\pi:=\pi_1(M(-I, I, 0, 0))$ is not nilpotent. In fact, there exist
 four generators of $\gamma$, $\delta$, $l$ and $h$ of 
$\pi_1(M(-I, I, 0, 0))$ such that, in particular,  
$\gamma (l, h) \gamma^{-1} = (l, h)(-I)$ and that $l$ and $h$ correspond to 
appropriate generators of the fundamental group of the fibre. 
We then have $\gamma l \gamma^{-1} = l^{-1}$ and hence 
 $\gamma l^k \gamma^{-1} l^{-k} = l^{-2k}\neq 0$ 
for any $k \in {\mathbb Z}$. Observe that the induced map 
$\pi_1({\mathbb T}) \to \pi$ is a monomorphism. 
An inductive argument deduces that 
 $\gamma l^{2^{k-2}} \gamma^{-1} l^{-(2^{k-2})}$ is in 
$[\pi, \cdots [\pi, [\pi, \pi]] \cdots ]$ ($k$-times). This implies that 
$\pi$ is not nilpotent.  
\end{ex}

It seems that Theorem \ref{thm:kappa'} is somewhat technical
and forced. However it is indeed a generalization of 
Theorem \ref{thm:kappa} as is described below.

\begin{rem} 
\label{rem:generalization}
Let ${\mathcal F} : M \to P \to B$ be a fibration over a
 simply-connected base with nilpotent fibre. 
Then $\lambda :=\eta (\overline{m}) : M \to |(\wedge V, d)|=M_{(\K)}$
 mentioned in the proof of Theorem \ref{thm:kappa'} is the rationalization 
 map; see \cite{B-G}. Thus the pullback fibration 
$M_\K \to E \to B$ by the rationalization map 
$\eta(m) : B \to B_\K$ of the rationalized fibration 
${\mathcal F}_{(\K)} : M_\K \to P_{(\K)} \to B_\K$ 
is a fibrewise localization. 
The result \cite[Theorem 3.2]{May2} enables us to obtain a canonical map 
$\lambda_\sharp : B\aut{M} \to B\aut{M_\K}$, which is an isomorphism on
 the rational cohomology. 

Let $f : B \to B\aut{M}$ be the classifying map for ${\mathcal F}$. 
Then the pullback of the universal $M_\K$-fibration 
by the composite $\lambda_{\sharp}\circ f$ is also a fibrewise localization of
 $\mathcal F$; see \cite[Theorem 4.1]{May2} and the ensuing discussion. 
The universal property of a fibrewise localization 
\cite[6.1 Theorem]{Ll} implies that 
$$
\lambda_\sharp\circ f \simeq f'\circ
 \eta(m), 
 \eqnlabel{add-4}
 $$
 where $f' : B_\K \to B\aut{M_\K}$ is the classifying map for 
 the rationalized fibration ${\mathcal F}_{(\K)}$ of ${\mathcal F}$. 
It turns out  that Theorem \ref{thm:kappa'} is a
 generalization of Theorem \ref{thm:kappa}.   
\end{rem}

\noindent
{\it Proof of Claim \ref{claim:eq}.}
 Recall the map $\lambda_\sharp : B\aut{M} \to B\aut{M_\K}$ mentioned in Remark \ref{rem:generalization}. 
Since $\lambda_\sharp$ induces an isomorphism on the rational cohomology, we consider the map 
$\lambda_\sharp$ the rationalization of $B\aut{M}$.  Observe that $B\aut{M_\K}$ is a rational space. 
Thus we have a sequence 
\begin{eqnarray*}
{\mathcal E}N{\mathcal W}(B) & \maprightud{l_*}{} &  {\mathcal E}N{\mathcal W}(B)\otimes \K 
\cong [B, B\aut{M}]\otimes \K \\
& \maprightud{(\lambda_\sharp)_*}{\cong}& [B, B\aut{M_\K}] \cong {\mathcal E}N_\K{\mathcal W}(B_\K), 
\end{eqnarray*}
where $(\lambda_\sharp)_*$ denotes the isomorphism induced by the rationalization 
$\lambda_\sharp$. 

Let ${\mathcal F}$ and ${\mathcal F}'$ be fibtarions over $B$ with fibre $N$ classified by maps $f$ and $g$, 
respectively.  Suppose that $l_*([{\mathcal F}])= l_*([{\mathcal F}'])$. 
Then $\lambda_\sharp\circ f \sim \lambda_\sharp \circ g$. 
It follows from (5.1) that $f'\circ \eta(m) \sim g'\circ \eta(m)$, where $f'$ and $g'$ are classifying maps 
from $B_\K$ to  $B\aut{N_\K}$ for $F_{(\K)}$ and $F_{(\K)}'$, respectively. 
The rationalization map $\eta(m) :  B \to B_\K$ induces a bijection 
$\eta(m)^* : [B_\K, B\aut{N_\K}] \to [B, B\aut{N_\K}]$. This allows us to conclude that  $f'\sim g'$. The converse also follows from (5.1). We have the result.   
\hfill\qed


\begin{thebibliography}{99}
%\bibitem{A-B-K}P. L. Antonelli, D. Burghelea and P. J. Kahl, The
%	non-finite homotopy type of some diffeomorphism groups,
%	Topology {\bf 11}(1972), 1-49.  
%
%\bibitem{A-L}M. Arkowitz and G. Lupton, 
%On finiteness of subgroups of self-homotopy equivalences, 
%Contemp. Math. {\bf 181}(1995), 1-25. 
%
\bibitem{B-G}A. K. Bousfield  and V. K. A. M. 
Gugenheim, On PL de Rham theory and rational homotopy type, 
Memoirs of AMS {\bf 179}(1976).
%
\bibitem{B-S}E. H. Brown Jr and R. H. Szczarba, On the rational homotopy 
type of function spaces, Trans. Amer. Math. Soc. {\bf 349}(1997), 4931-4951.
%
\bibitem{B-M}U. Buijs and A. Murillo, Basic constructions in rational
	homotopy theory of function spaces, 
Ann. Inst. Fourier (Grenoble) {\bf 56}(2006), 815-838. 
%
%\bibitem{F-H}F. T. Farrell and W. C. Hsiang, 
%On the rational homotopy groups of the diffeomorhism groups of discs, 
%spheres and aspherical manifolds, 
%Proceedings of Symposia in Pure Math. {\bf 32}(1978), 325-337. 
%
%\bibitem{DGMS}P. Deligne, P. Griffiths, J. Morgan and D. Sullivan, Real
%	homotopy theory of K\"ahler manifolds, Invent. Math. 
%         {\bf 29}(1975), 245-274. 
%
%\bibitem{E-E}C. Earle and J. Eells, 
%A fibre bundle description of Teichm\"uller theory.
%J. Differential Geometry {\bf 3}(1969), 19-43. 
%
\bibitem{F-H-T}Y. F\'elix, S. Halperin and  J. -C. Thomas, 
Rational Homotopy Theory,
Graduate Texts in Mathematics {\bf 205}, Springer-Verlag.
%
%\bibitem{F-T}Y. F\'elix and J. -C. Thomas, 
%The monoid of self-homotopy equivalences of some homogeneous spaces,
%Exposiotiones Math. {\bf 12}, 305-322.
%
\bibitem{Ge}H. Geiges, Symplectic structures on $T^2$-bundles
	over $T^2$, Duke Math. J. {\bf 67}(1992), 539-555. 
%
\bibitem{G}D. H. Gottlieb, On fibre spaces and the evaluation map,
Ann. of Math. (2){\bf 87}(1968), 42-55.
%
\bibitem{Gr}A. Gramain, Le type d'homotopie du groups des
	diff\'eomorphismes d'une surface compacte, 
Ann. Sci. \'Ecole Norm. Sup. (4)  {\bf 6}(1973), 53-66.
%
\bibitem{G-M}V. K. A. M. Gugenheim and J. P. May, On the theory and
	applications of differential torsion products, 
Mem. Amer. Math. Soc. 142, 1974. 
%
\bibitem{H}A. Haefliger, Rational homotopy of space of sections of 
a nilpotent bundle, Trans. Amer. Math. Soc. {\bf 273}(1982), 609-620.
%
\bibitem{Hal1}S. Halperin,  Lectures on minimal models, Publ. Internes de
L'U.E.R.
de Math. Pures de l'Universit\'e de Lille I, no. 111, 1977.
%
\bibitem{Hal2}S. Halperin, Rational fibrations, minimal models, and fibrings of homogeneous spaces. 
Trans. Amer. Math. Soc. {\bf 244}(1978), 199-224.
%
\bibitem{Hase}K. Hasegawa, Minimal model of nilmanifolds,
	Proc. Amer. Math. Soc. {\bf 106}(1989), 65-71. 
%
\bibitem{H-M-R}P. Hilton, G. Mislin and J. Roitberg, Localization of
nilpotent groups and spaces, North Holland Mathematics Studies
15, North Holland, New York, 1975.
%
%\bibitem{H-M-R}Hilton, P., Mislin, G.  and Roitberg, J.: Localization of
%nilpotent groups and spaces, North Holland Mathematics Studies
%15, North Holland, New York, 1975.
%
\bibitem{H-K-O}Y. Hirato, K. Kuribayashi and N. Oda, 
A function space model approach to the rational evaluation subgroups, 
Math. Z. {\bf 258}(2008), 521-555.   
%
%\bibitem{J-K}T. Januszkiewicz and J. Kedra, Characteristic classes of
%	smooth fibrations, preprint (2002) {\tt arXiv:math/0209288v1}. 
%
\bibitem{Ka}P. J. Kahn, Symplectic torus bundle and group extensions, 
New York J. Math. {\bf 11}(2005), 35-55.  
%
\bibitem{Ke}J. Kedra, KS-model and symplectic structure on total spaces 
of bundles, Bull. Belg. Math. Soc. {\bf 7} (2000), 377-385. 
%
\bibitem{K-M}J. Kedra and D. McDuff, Homotopy properties of Hamiltonian
	group actions, Geometry \& Topology, {\bf 9}(2005), 121-162.  
%
\bibitem{Ku1}K. Kuribayashi, A rational model for the evaluation map, 
Georgian Mathematical Journal {\bf 13}(2006), 127-141. 
%
\bibitem{Ku2}K. Kuribayashi, On the rational cohomology of the total space of the universal fibration 
with an elliptic fibre, Contemporary Math. {\bf 519}(2010), 165-179. 
%
%\bibitem{Ku2}K. Kuribayashi, 
%Rational visibility of a Lie group in the monoid of self-homotopy 
%equivalences of a homogeneous space, preprint, 2008. 
%
%\bibitem{Ku2}K. Kuribayashi, On extensions of a symplectic class, 
%in preparation.  
%
\bibitem{L-M}F. Lalonde and D. McDuff, Symplectic structures on fibre
	bundles, Topology, {\bf 42}(2003), 309-347. 
%
\bibitem{L-S}P. Lambrechts and D. Stanley, Poincar\'e duality and
	commutative differential graded algebras, Ann. Sci. \'Ec. Norm. Sup\'er. (4) {\bf 41}(2008), 495-509. 
{\tt arXiv:math.AT/0701309v1}. 
%
\bibitem{Ll}I. Llerena, Localization of fibrations with nilpotent fibre, 
Math. Z. {\bf 188}(1985), 397-410. 
%
\bibitem{L-O}G. Lupton and J. Oprea, Cohomologically symplectic space:
Toral actions and the Gottlieb group, 
Trans. Amer. Math. Soc. {\bf 347}(1995), 261-288.
%
\bibitem{May1}J. P. May, Classifying spaces and fibrations, 
Mem. Amer. Math. Soc. 155, 1975. 
%
\bibitem{May2}J. P. May, Fiberwise localization and completion, 
Trans. Amer. Math. Soc. {\bf 258}(1980), 127-146.
%
\bibitem{M-S}D. McDuff and D. Salamon, Introduction to Symplectic
	Topology, Oxford Mathematical Monographs, Clarendon Press,
	Oxford, 1995.
%
%\bibitem{M}J. Milnor, On the homology of Lie groups made discrete, 
%Comment. Math. Helvetici {\bf 58}(1983), 72-85. 
%
%\bibitem{M-M}J. Milnor and J. -C. Moore, On the structure of Hopf algebras.
%Ann. of Math. {\bf 81} (1965), 211-264.  
%
%\bibitem{Milnor-S}J. Milnor and J. D. Stasheff, 
%Characteristic classes, Annals of Mathematics Studies, No. 76. 
%Princeton University Press, Princeton, 1974.
%
%\bibitem{M-T}M. Mimura and H. Toda, Topology of Lie groups. II. 
%Translations of Mathematical Monographs, 91. American Mathematical
%	Society, Providence, RI, 1991.
%
%\bibitem{N-S1}D. Notbohm and L. Smith, Fake Lie groups and maximal
%	tori. III, Math. Ann {\bf 290}(1991), 629-642. 
%
%\bibitem{N-S}D. Notbohm and L. Smith, Rational Homotopy of the space of
%	homotopy equivalences of a flag manifold, 
%Algebraic topology, Homotopy and Group Cohomology, Proceedings barcelona
%	1990, J. Aguad\'e, M. Castellt and F. R. Cohen editors, 
%Lecture Notes in Math. 1509 (1992), 301-312.  
%
\bibitem{P}L. Polterovich, The Geometry of the Group of Symplectic
	Diffeomorphisms, Lectures in Math, EHT Z\"urich, Birkh\"auser (2001). 
%
%\bibitem{R}A. G. Reznikov, Characteristic classes in symplectic
%	topology, Selecta Math. {\bf 3}(1997), 601--642. 
%
\bibitem{S-F}K. Sakamoto and S. Fukuhara, Classification of $T^2$-bundles
	over $T^2$, Tokyo J. Math. {\bf 6}(1983), 311-327. 
%
%\bibitem{S}S. Sasao, The homotopy of $\text{MAP}({\mathbb C}P^m, 
%{\mathbb C}P^n)$, J. London Math. Soc. {\bf 8}(1974), 193-197.
%
%\bibitem{S-T}H. Shiga and M. Tezuka, 
%Rational fibrations, homogeneous spaces with positive Euler
%	characteristics and Jacobians.  Ann. Inst. Fourier (Grenoble)
%	{\bf 37}(1987), 81-106. 
%
%\bibitem{S.Smith}S. B. Smith, Rational evaluation subgroups,
%Math. Z. {\bf 221}(1996), 387-400.
%
%\bibitem{S.Smith2}S. B. Smith, 
%Rational classification of simple function space components 
%for flag manifolds, Canad. J. Math.  {\bf 49}(1997), 855--864. 
%
\bibitem{Stasheff}J. D. Stasheff, A classification theorem for fibre spaces, Topology, {\bf 2} (1963), 239-346. 
%
\bibitem{St}Z. Stepie\'n, The Lalonde-McDuff conjecture for nilmanifolds,
 Differential Geom. Appl.  {\bf 26}(2008), 267--272. 
%	preprint, {\tt arXiv:math.SG/0608214v1}.
%
\bibitem{T}W. P. Thurston, Some simple examples of symplectic manifolds,
	Proc. Amer. Math. Soc. {\bf 55}(1976), 467-468. 
%
%\bibitem{T-O}A. Tralle and J. Oprea, Symplectic Manifolds with no
%	K\"ahler Structure, Lecture Notes Math. {\bf 1661}, Berlin,
%	Springer, 1997. 
%
%\bibitem{Y}K. Yamaguchi, On rational homotopy of $\text{MAP}({\mathbb H}P^m, 
%{\mathbb H}P^n)$, Kodai Math. J. {\bf 6}(1983), 279-288.  
%
\bibitem{W}R. Walczac, Existence of symplectic structure on torus
	bundles over surfaces, Ann. Global Anal. Geom. {\bf 28}(2005),
	211-231. 
%
\end{thebibliography}
\end{document}